\newtheorem{thm}{Theorem}[section]
\newtheorem{prop}[thm]{Proposition}
\newtheorem{cor}[thm]{Corollary}
\newtheorem{lemma}[thm]{Lemma}
\newtheorem{conj}[thm]{Conjecture}
\newtheorem{claim}[thm]{Claim}
\theoremstyle{definition}
\theoremstyle{definition}
\theoremstyle{definition}
\theoremstyle{definition}
\theoremstyle{definition}
\theoremstyle{definition}
\theoremstyle{remark}
\theoremstyle{remark}
\newtheorem{remark}[thm]{Remark}
\newcommand{\NN}{\ensuremath{\mathbb N}}
\newcommand{\RR}{\ensuremath{\mathbb R}}
\newcommand{\floor}[1]{\left \lfloor #1 \right \rfloor}
\newcommand{\ceil}[1]{\left \lceil #1 \right \rceil}
\newcommand{\E}{\mathbb{E}}
\renewcommand{\P}{\operatorname{Pr}}
\newcommand{\Poisson}{\operatorname{Po}}
\newcommand{\Bin}{\operatorname{Bin}}
\DeclareRobustCommand{\stirling}{\genfrac\{\}{0pt}{}}
\begin{document}
\title{Generalized Tuza's conjecture for random hypergraphs}

\author{Abdul Basit\thanks{Department of Mathematics,
Iowa State University, Ames IA; \texttt{abasit@iastate.edu}}
\and David Galvin\thanks{Department of Mathematics,
University of Notre Dame, Notre Dame IN; \texttt{dgalvin1@nd.edu}. 
Supported in part by the Simons Foundation.
}}

\maketitle

\begin{abstract}
A celebrated conjecture of Tuza states that in any finite graph the minimum size of a cover of triangles by edges is at most twice the maximum size of a set of edge-disjoint triangles. For an $r$-uniform hypergraph ($r$-graph) $G$, let $\tau(G)$ be the minimum size of a cover of edges by $(r-1)$-sets of vertices, and let $\nu(G)$ be the maximum size of a set of edges pairwise intersecting in fewer than $r-1$ vertices. Aharoni and Zerbib proposed the following generalization of Tuza's conjecture: $$ \text{For any $r$-graph $G$,  $\tau(G)/\nu(G) \leq \ceil{(r+1)/2}$.} $$

Let $H_r(n,p)$ be the uniformly random $r$-graph on $n$ vertices. We show that, for $r \in \{3, 4, 5\}$ and any $p = p(n)$, $H_r(n,p)$ satisfies the Aharoni-Zerbib conjecture with high probability (i.e., with probability approaching 1 as $n \rightarrow \infty$). We also show that there is a $C < 1$ such that, for any $r \geq 6$ and any $p = p(n)$, $\tau(H_r(n, p))/\nu(H_r(n, p)) \leq C r$ with high probability. Furthermore, we may take $C < 1/2 + \varepsilon$, for any $\varepsilon > 0$, by restricting to sufficiently large $r$ (depending on $\varepsilon$).
\end{abstract}

\section{Introduction}

For a finite graph $G$, $\nu_t(G)$, the {\em triangle matching number}, is the maximum size of a set of edge-disjoint triangles in $G$, and $\tau_t(G)$, the {\em triangle cover number}, is the minimum size of a set of edges $C$ such that every triangle in $G$ contains a member of $C$. It is immediate from the definitions that $\nu_t(G) \leq \tau_t(G) \leq 3\nu_t(G)$. A celebrated conjecture of Tuza states:
\begin{conj}[Tuza~\cite{tuza90}]
\label{conj:tuza}
For any graph $G$, $\tau_t(G) \le 2\nu_t(G)$.
\end{conj}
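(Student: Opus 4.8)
The plan is to attack Conjecture~\ref{conj:tuza} through its linear-programming relaxation, since what is really at stake is the integrality gap of the triangle-covering polytope; I should say at the outset that this is a long-standing open problem, so what follows is a line of attack rather than a route I expect to reach the end of. First I would set up the covering LP---assign $x_e \ge 0$ to each edge $e$ of $G$ with $\sum_{e \in T} x_e \ge 1$ for every triangle $T$, and minimize $\sum_e x_e$---together with its dual packing LP---assign $y_T \ge 0$ to each triangle $T$ with $\sum_{T \ni e} y_T \le 1$ for every edge $e$, and maximize $\sum_T y_T$. Writing $\tau_t^*(G)$ and $\nu_t^*(G)$ for the two optima, strong LP duality gives $\nu_t^* = \tau_t^*$, and this refines the trivial chain from the excerpt to $\nu_t \le \nu_t^* = \tau_t^* \le \tau_t$. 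In these terms Conjecture~\ref{conj:tuza} asks for an integral triangle cover whose size is controlled by a factor $2$ against the \emph{integral} packing number---i.e., to round an optimal fractional cover to an integral one without losing too much relative to $\nu_t$---and neither half of the LP sandwich gives this for free.

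The concrete first steps would be to install the known fractional estimates as lemmas: Krivelevich's theorems that $\tau_t^*(G) \le 2\nu_t(G)$ and, dually, $\tau_t(G) \le 2\nu_t^*(G)$, each proved by a weighting argument combined with K\"{o}nig's theorem on an auxiliary bipartite graph. Next I would reprove, as templates, the classical special cases---planar graphs and sufficiently dense graphs (Tuza), and several structured classes where improved constants are known---since each exhibits a concrete structural reason that a factor-$2$ cover exists. Then I would run Haxell's refinement of the maximum-packing argument: fix a maximum set $\mathcal{T}$ of edge-disjoint triangles, note that its $\le 3\nu_t$ edges already form a triangle cover (by maximality of $\mathcal{T}$), and then show by a careful local analysis of the configurations around $\mathcal{T}$ that a constant fraction of those edges is always redundant; this is what yields the best general bound known to date, $\tau_t(G) \le (3 - 3/23)\nu_t(G)$.

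The main obstacle---and the reason the conjecture has resisted for decades---is closing the gap from $3 - \varepsilon$ down to exactly $2$. The triangle hypergraph is badly behaved: it is neither balanced nor normal, and has no bounded parameter forcing a small integrality gap, so no black-box LP-rounding or iterated-rounding tool beats $3$, and the savings one can extract from a single maximum packing seem to cap out well short of a factor-$2$ improvement. The natural framework is a density dichotomy---peel edge-disjoint triangles greedily when $G$ is triangle-sparse and finish the residue by hand, and when $G$ is triangle-dense locate large books or $K_4$-rich subgraphs on which a cover of size $2\nu_t$ is forced---and then tune the threshold so that the two estimates meet. Making these two halves meet at exactly $2\nu_t$, rather than at $(2 + \varepsilon)\nu_t$, is precisely where every known attempt stalls; so I would not expect this plan to settle the conjecture without a genuinely new idea, and a more realistic outcome is a modest improvement on Haxell's constant or a proof for a wider class of graphs.
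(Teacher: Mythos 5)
The statement you were asked about is Conjecture~\ref{conj:tuza} itself, which is an open problem: the paper does not prove it and does not claim to. It only states it (attributing it to Tuza), records that the best known general bound is Haxell's $\tau_t(G) \le \frac{66}{23}\nu_t(G)$, and then proves \emph{random} analogues --- Theorems~\ref{thm:mainsmall} and~\ref{thm:mainlarge} for the random hypergraph $H_r(n,p)$, via couplings with Galton--Watson trees, Pippenger-type matching bounds, and Tur\'an-type cover constructions. So there is no proof in the paper to compare yours against, and your proposal should not be read as filling that role either.

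To your credit, you are explicit about this, and your survey of the landscape is essentially accurate: the LP sandwich $\nu_t \le \nu_t^* = \tau_t^* \le \tau_t$, Krivelevich's two fractional relaxations, the classical special cases, and Haxell's local analysis of a maximum packing are indeed the standard toolkit. But as a proof of the statement there is a genuine and unavoidable gap: every step you describe tops out strictly above the constant $2$. The maximality argument gives $3\nu_t$, Haxell's refinement gives $3-\frac{3}{23}$, and the fractional results each replace only one side of the inequality by its LP value; nothing in your plan supplies the missing idea that would close the gap to $2$ against the \emph{integral} packing number, and you say so yourself. Your density-dichotomy sketch (sparse: greedy peeling; dense: books/$K_4$-rich structure) is plausible as a program but contains no argument for why the two regimes would meet at exactly $2\nu_t$ rather than $(2+\varepsilon)\nu_t$, which is precisely the open content of the conjecture. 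If you want to produce something provable in the spirit of this paper, the realistic targets are the ones the paper actually pursues: establish the bound for restricted classes (here, $H_r(n,p)$ w.h.p., or for graphs, $G(n,p)$ as in Kahn--Park and Bennett--Cushman--Dudek), or improve the general constant below $\frac{66}{23}$.
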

If true, Conjecture~\ref{conj:tuza} would be sharp, as is seen by taking $G$ to be $K_4$ or $K_5$ (or a disjoint union of these), and is close to sharp in many other cases (see~\cite{bk16, hkt12}). The best known general bound is $\tau_t(G) \leq \frac{66}{23}\nu_t(G)$, due to Haxell~\cite{haxell99}. 
In a recent  development, Kahn and Park~\cite{kp20}, and (independently) Bennett, Cushman and Dudek~\cite{bcd20} show that Tuza's conjecture is true for the Erd\H{o}s-R\'enyi random graph $G(n, p)$.
\begin{thm}[\cite{kp20, bcd20}]
For any $p = p(n)$, $\tau_t(G(n, p)) \leq 2\nu_t(G(n, p))$ w.h.p.\footnote{where ``with high probability'' means with probability tending to 1 as $n \rightarrow \infty$.}
\end{thm}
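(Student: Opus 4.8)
I would split according to the typical number of triangles through an edge, i.e.\ according to whether $np^2\to 0$, $np^2\to\infty$, or $np^2\to c$ for a constant $c\in(0,\infty)$; since the assertion is a w.h.p.\ statement over \emph{all} $p=p(n)$, passing to subsequences reduces it to these three cases. Write $m=|E(G(n,p))|$ and let $t$ be the number of triangles. I will use two elementary facts valid for every graph: $\tau_t\le m/2$ (a maximum cut has at least $m/2$ edges, so deleting the $\le m/2$ non-cut edges leaves a bipartite, hence triangle-free, graph), and the fact that both $\tau_t$ and $\nu_t$ are $1$-Lipschitz in the edge set --- inserting or deleting one edge $e$ changes $\nu_t$ by at most $1$ (a triangle matching uses at most one triangle through $e$) and $\tau_t$ by at most $1$ (add $e$ to, or drop it from, a cover) --- so by McDiarmid's inequality each of $\tau_t,\nu_t$ is concentrated to within $O(n\sqrt{\log n})$ of its mean.

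\textbf{Case $np^2\to 0$ (very sparse).} Almost every triangle is \emph{isolated}, sharing no edge with another triangle: conditioned on a triangle $T$ being present, the chance that some edge of $T$ lies in a second triangle is at most $3\bigl(1-(1-p^2)^{n-3}\bigr)=O(np^2)=o(1)$, so the expected number of non-isolated triangles is $o(\E t)$. Together with concentration of $t$ (an easy second moment when $\E t\to\infty$; when $\E t=O(1)$ one checks instead that w.h.p.\ no two triangles share an edge, whence $\tau_t=\nu_t=t$), the set $A$ of isolated triangles has $|A|=(1-o(1))t$ w.h.p. Every maximum triangle matching $M$ contains $A$, since an isolated triangle can always be added. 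Take the cover consisting of one edge of each triangle of $A$ and all three edges of each triangle of $M\setminus A$; it is valid (a triangle missed by it would share an edge with some $M$-triangle, necessarily one in $M\setminus A$, all of whose edges are chosen), and has size $|A|+3(\nu_t-|A|)=3\nu_t-2|A|\le \nu_t+2(t-|A|)=(1+o(1))\nu_t<2\nu_t$.

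\textbf{Case $np^2\to\infty$ (dense).} I would show $G(n,p)$ has a near-perfect triangle packing, $\nu_t\ge(1-o(1))m/3$. The triangle hypergraph $\mathcal T$ (vertices the edges of $G$, hyperedges the triangles) has all codegrees at most $1$, and w.h.p.\ all but $o(m)$ of its vertices have degree $(1\pm o(1))np^2$; after the standard step of deleting the remaining atypical edges (destroying only $o(t)$ triangles) the R\"odl nibble / Pippenger--Spencer theorem applies to the resulting essentially regular hypergraph of unbounded degree and small codegree, yielding a triangle matching covering $(1-o(1))m$ of the edges of $G$ --- alternatively one may quote the known near-perfect triangle packing of $G(n,p)$ for $p\gg n^{-1/2}$. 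With $\tau_t\le m/2$ this gives $\tau_t/\nu_t\le \tfrac32+o(1)<2$.

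\textbf{Case $np^2\to c\in(0,\infty)$: the critical window, which I expect to be the main obstacle.} Here the triangles cluster too much for the first argument --- a positive fraction of them are non-isolated, and there are books of unbounded size --- while $\mathcal T$ has bounded degree, so the second argument only yields a constant-fraction packing. By the concentration remark it suffices to prove $\E\tau_t\le(2-\delta)\,\E\nu_t$ for some fixed $\delta>0$, uniformly over the window. For $\E\nu_t$ one shows $\nu_t=(\nu(c)+o(1))\,m$, where $\nu(c)$ is the matching density of the local (Benjamini--Schramm) limit of $\mathcal T$, a Poisson branching structure, using continuity of matching densities along local weak limits of sparse (hyper)graphs in the spirit of Bordenave--Lelarge--Salez. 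For $\E\tau_t$ one uses $\tau_t=m-\mathrm{ex}(G(n,p),K_3)$ and the determination of the $K_3$-extremal density of $G(n,p)$ in the critical regime $p=\Theta(n^{-1/2})$ (a sparse random Tur\'an computation), giving $\tau_t=(\tau(c)+o(1))\,m$. It then remains to verify $\tau(c)<2\nu(c)$ for every $c\in(0,\infty)$: the ratio $\tau(c)/\nu(c)\to 1$ as $c\to 0^{+}$ and $\to\tfrac32$ as $c\to\infty$, and the interior is a calculus check on the two fixed-point functions. The delicate ingredient is pinning $\nu_t$ to the correct constant in this window; the sparse-Tur\'an input and the final inequality are the remaining substantial pieces.
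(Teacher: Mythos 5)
Your three-way split according to $d=np^2$ is exactly the right frame (it is the decomposition used both in the cited proofs and in this paper's hypergraph generalization, cf.\ Theorems~\ref{thm:small-p}, \ref{thm:matching-large-p}, \ref{thm:matching-medium-p}), and your sparse and dense cases are essentially correct: for $np^2\to 0$ the isolated-triangle argument (every maximum matching contains all isolated triangles; cover them by one edge each and take all edges of the remaining matching triangles) is sound, and for $np^2\to\infty$ the nibble/Pippenger packing $\nu_t\ge(1-o(1))m/3$ together with the max-cut bound $\tau_t\le m/2$ gives ratio $3/2+o(1)$, which is how the dense regime is actually handled (and how this paper handles $d=\omega(1)$ via Theorems~\ref{thm:matching-large-p} and~\ref{thm:cover-absolute}).

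The genuine gap is the critical window $np^2\to c$, which is where all the substance of the theorem lies, and there your plan rests on ingredients that do not exist. First, while $\tau_t=m-\mathrm{ex}(G(n,p),K_3)$ is a correct identity, the limiting value of $\mathrm{ex}(G(n,p),K_3)/m$ \emph{inside} the window $p=\Theta(n^{-1/2})$ is not known: the sparse Tur\'an results (Conlon--Gowers, Schacht, DeMarco--Kahn) resolve only $p\gg n^{-1/2}$ and $p\ll n^{-1/2}$, so there is no ``$\tau(c)$'' to quote. Second, the Bordenave--Lelarge--Salez objective-method determination of matching densities applies to matchings in sparse \emph{graphs}; no analogous exact-density theorem is available for matchings in the $3$-uniform triangle hypergraph, so ``$\nu_t=(\nu(c)+o(1))m$'' is likewise unsupported. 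Third, even granting both black boxes, your closing step ``$\tau(c)<2\nu(c)$ is a calculus check'' has no formulas to check. What actually closes this case (in Kahn--Park, whose $r=3$ machinery this paper generalizes) is a pair of \emph{explicit one-sided bounds}: a constructed cover --- random bipartition, start from the within-class pairs (density $1/2$), delete those whose triangles all stay in the class, and add back a small correction set to restore coverage, analyzed via the branching-process coupling of Section~\ref{sec:coupling} --- giving $\tau_t\le(1+o(1))\beta_3(d)\binom{n}{2}$ as in Section~\ref{sec:cover3}; and the random greedy triangle matching, analyzed by the survival-rule differential equation as in Section~\ref{sec:matching-medium-p}, giving $\nu_t\ge(1+o(1))\tfrac13\alpha_3(d)\binom{n}{2}$; one then verifies $\beta_3(d)\le\tfrac23\alpha_3(d)\cdot 2$ by an explicit (finite) computation, as referenced in Appendix~\ref{app:smallr}. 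Without these (or equivalent) explicit constructions and the final verification, your proposal does not prove the critical case, and hence not the theorem.
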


In this paper, we are interested in a generalization of Conjecture~\ref{conj:tuza} to $r$-uniform hypergraphs, which was proposed by Aharoni and Zerbib~\cite{az20} (see Conjecture~\ref{conj:generaltuzar}). We show that, for small $r$, the \emph{uniformly random $r$-graph} satisfies the Aharoni-Zerbib conjecture w.h.p., and we obtain non-trivial bounds for larger $r$.

\subsection{Notation and definitions}

A hypergraph $G$ is a collection of distinct subsets (referred to as {\em edges}) of a set $V(G)$ (the {\em vertex set}). Hence, $|G|$ is the number of edges of the hypergraph $G$.
If all edges have cardinality $r$, then $G$ is {\em $r$-uniform}. Throughout, we restrict our attention to $r$-uniform  hypergraphs (or {\em $r$-graphs} for short). For a set $\sigma \subseteq V(G)$ with $|\sigma| = r - 1$, the {\em neighborhood} of $\sigma$, $N_G(\sigma)$, is the set of $v \in V(G)$ such that $\sigma \cup \{v\} \in E(G)$. The {\em co-degree} of $\sigma$, denoted by $d_G(\sigma)$, is the size of its neighborhood. When $G$ is clear from context, we omit the subscript and write $N(\sigma)$ and $d(\sigma)$. We define $D(G)$ to be the collection of $(r-1)$-sets that have non-zero co-degree, i.e., $D(G) = \{\sigma \in \binom{V(G)}{r-1}: d(\sigma) > 0\}$. Equivalently, $D(G)$ is the union over edges $e$ of the $(r-1)$-subsets of $e$.

A {\em matching} in an $r$-graph $G$ is a set of pairwise disjoint edges, and the {\em matching number} $\nu(G)$ is the maximum size of a matching in $G$. A {\em cover} of $G$ is a set of vertices intersecting all edges of $G$, and the {\em covering number} $\tau(G)$ is the minimum size of a cover. Clearly we have
\begin{equation*}
    \nu(G) \leq \tau(G) \leq r\nu(G).
\end{equation*}
More generally, an {\em $m$-matching $M$} in $G$ is a set of edges such that any two edges in $M$ intersect in fewer than $m$ vertices, and an {\em $m$-cover} $C$ of $G$ is a collection of $m$-sets of vertices such that every edge in $G$ contains at least one member of $C$. So a 1-matching is a matching, and a 1-cover is a cover. 
The {\em $m$-matching number}, denoted by $\nu^{(m)}(H)$, is the maximum size of an $m$-matching in $H$, and the {\em $m$-covering number}, denoted by $\tau^{(m)}(H)$, is the minimum size of an $m$-cover of $H$.

For a set $X$ and a natural number $k$ we use $\binom{X}{k}$ for the set of $k$-sets of $X$.
Let $G^{(m)}$ be the  hypergraph whose vertex set is $\binom{V(G)}{m}$ and whose edge set is $\left\{ \binom{e}{m}: e \in G\right\}$. Note that $M \subseteq G$
is an $m$-matching in $G$ if and only if $\left\{ \binom{f}{m}: f \in M\right\}$ is a matching in $G^{(m)}$, and a collection of $m$-sets $C$ is an $m$-cover of $G$ if and only if $C$ is a cover of $G^{(m)}$. That is, $\nu^{(m)}(G) = \nu(G^{(m)})$ and $\tau^{(m)}(G) = \tau(G^{(m)})$, implying
\begin{equation*}
    \nu^{(m)}(G) \leq\tau^{(m)}(G) \leq \binom{r}{m}\nu^{(m)}(G).
\end{equation*}

Given a (2-)graph $G$, let $T(G)$ be the $3$-graph with vertex set $V(G)$ whose edges are triples of vertices forming triangles in $G$. Conjecture~\ref{conj:tuza} can be rephrased as: for any graph $G$, $\tau^{(2)}(T(G)) \leq 2\nu^{(2)}(T(G))$. Aharoni and Zerbib conjectured that the same inequality holds for any 3-graph $G$.
\begin{conj}[Aharoni-Zerbib~{\cite[Conjecture~1.2]{az20}}]
\label{conj:generaltuza3}
For any 3-graph $G$, $$\tau^{(2)}(G) \le 2\nu^{(2)}(G).$$
\end{conj}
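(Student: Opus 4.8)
The plan is to work with the reformulation that the statement suggests. A $2$-matching in a $3$-graph $G$ is exactly a \emph{linear} subhypergraph --- a set of edges pairwise meeting in at most one vertex (a partial Steiner system) --- while a $2$-cover is a set of pairs of vertices such that every edge contains one of them. So the conjecture asserts that every $3$-graph has a pair-cover of size at most twice the size of its largest linear subhypergraph. First I would pass to a minimal potential counterexample $G$: every edge lies in some maximum linear subhypergraph, no vertex or edge is deletable without changing $\nu^{(2)}$ or $\tau^{(2)}$, and --- as in the analysis of Tuza's original conjecture --- $G$ has no small separations, so one may argue inside a single ``core''. Fix a maximum linear subhypergraph $M$, so $|M|=\nu^{(2)}(G)$. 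Since $M$ is maximal, every edge of $G$ shares a pair with some $f\in M$, hence $\bigcup_{f\in M}\binom{f}{2}$ is a $2$-cover, which re-proves $\tau^{(2)}(G)\le 3\nu^{(2)}(G)$. The whole game is to discard one of the three pairs of each $f\in M$ while still covering every edge.

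Second, I would build the fractional scaffolding, following Krivelevich's treatment of Tuza's conjecture. LP duality gives $\tau^{*(2)}(G)=\nu^{*(2)}(G)$ (the fractional $2$-cover and $2$-matching numbers), and I would try to prove the two half-integral bounds $\tau^{(2)}(G)\le 2\nu^{*(2)}(G)$ and $\tau^{*(2)}(G)\le 2\nu^{(2)}(G)$ by explicit rounding: for the first, extract from an optimal pair-cover a fractional linear subhypergraph of value at least $\tau^{(2)}(G)/2$; for the second, greedily pull an integral linear subhypergraph of size at least $\tau^{*(2)}(G)/2$ out of an optimal fractional pair-cover, working locally around the link of each pair. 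Each of these should reduce to a finite optimization whose extremal instances are the complete $3$-graphs on four and five vertices (the analogues of $K_4$ and $K_5$), for which the ratio $\tau^{(2)}/\nu^{(2)}$ is exactly $2$ and tight.

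Third --- and here the real work sits --- I would try to close the integrality gap, upgrading the half-integral bounds to $\tau^{(2)}(G)\le 2\nu^{(2)}(G)$. The natural device is local modification of $M$: from each $f=\{x,y,z\}\in M$ retain two of its three pairs and take the union of all retained pairs as the candidate $2|M|$-element cover. Dropping the pair $\{x,y\}$ from $f$ endangers exactly the edges $\{x,y,w\}$ with $w\ne z$, and is legitimate only if each such edge is hit by a retained pair of some matching edge other than $f$ (necessarily $\{x,w\}$ or $\{y,w\}$). The edges that thereby force a pair to be kept, together with the matching edges implicated, form an auxiliary conflict structure on $M$, and one wants a system of choices with no unresolved conflict. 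I would aim for a defect/Hall-type deficiency argument on this structure, or, failing that, a counting or absorption argument in the spirit of Haxell's $66/23$ bound, to show the choices can always be made globally consistent.

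I expect the third step to be the decisive obstacle: it is exactly the integrality gap that has kept Tuza's original conjecture open, and here the conflict structure is genuinely richer, since a single dropped pair $\{x,y\}$ can be endangered simultaneously by arbitrarily many edges $\{x,y,w\}$. A realistic fallback --- already of interest --- is to prove the conjecture for structurally restricted $G$, for instance when the shadow $D(G)$ is planar, or $G$ contains no complete $3$-graph on four vertices, or $G$ has bounded maximum co-degree, where the conflict structure is sparse enough for the Hall-type step to go through; and, unconditionally, to record whatever constant bound $\tau^{(2)}(G)\le c\,\nu^{(2)}(G)$ with $c<3$ the fractional scaffolding delivers.
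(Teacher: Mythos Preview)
The statement you are attempting to prove is a \emph{conjecture}, not a theorem: the paper does not prove it, and indeed cannot, since Conjecture~\ref{conj:generaltuza3} strictly contains Tuza's conjecture (every triangle hypergraph $T(G)$ is a $3$-graph), which has been open since 1981. What the paper actually establishes is the special case $G=H_3(n,p)$ with high probability (Theorem~\ref{thm:mainsmall}), via an entirely different mechanism: a lower bound on $\nu^{(2)}$ coming from the random greedy matching analyzed through a Galton--Watson coupling, and an upper bound on $\tau^{(2)}$ coming from an explicit random-partition cover construction, followed by a comparison of the two resulting functions of $d$.

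Your document is not a proof but a research outline, and you yourself identify the gap: the third step, closing the integrality gap between the fractional and integral versions, is exactly the obstruction that has resisted four decades of work on Tuza's conjecture. The Krivelevich-style half-integral bounds you sketch in step two are known for Tuza's setting, but even there they do not combine to give the integral statement; there is no reason to expect the situation to be easier for general $3$-graphs, where (as you note) the conflict structure around a dropped pair is strictly richer. Your fallback --- proving the conjecture for restricted classes or recording some $c<3$ --- is a reasonable research direction, but it is not a proof of the stated conjecture, and the paper makes no such claim either.
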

They also conjectured that a similar phenomenon holds much more generally. 
\begin{conj}[Aharoni-Zerbib~{\cite[Conjecture~1.10]{az20}}]
\label{conj:generaltuzar}
For any $r$-graph $G$, \[ \tau^{(r-1)}(G) \leq \ceil{\frac{r+1}{2}}\nu^{(r-1)}(G).\]
\end{conj}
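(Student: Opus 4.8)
The conjectured ratio $\ceil{(r+1)/2}$ is roughly half of the trivial bound $\binom{r}{r-1}=r$, so the natural plan is the one behind all Tuza-type statements: extract a maximal $(r-1)$-matching, cover the remaining edges ``locally'' around it, and discard about half of the $(r-1)$-sets this produces. Fix a maximum $(r-1)$-matching $M$, so $|M|=\nu:=\nu^{(r-1)}(G)$; being maximum, $M$ is in particular maximal, hence every edge $e\in G$ either lies in $M$ or satisfies $|e\cap f|=r-1$ for some $f\in M$. In the latter case $e\cap f$ is one of the $r$ sets in $\binom{f}{r-1}$ and, together with the single vertex of $e\setminus f$, determines $e$; so $\bigcup_{f\in M}\binom{f}{r-1}$ is an $(r-1)$-cover of size at most $r\nu$, and the whole problem is to prune it by a factor close to $2$. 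Call $\sigma\in\binom{f}{r-1}$ \emph{live} if $G$ contains an edge $\sigma\cup\{v\}$ with $v\notin f$; a live $\sigma$ can be dropped only if every edge through it is captured by the part of the cover coming from a different $f'\in M$. Thus the only way to beat the bound $r$ is to exploit the interaction between distinct matching edges, and the entire difficulty is concentrated there.

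The technical heart would be a local ``discharging'' lemma. I would start from the fractional $(r-1)$-cover that assigns weight $\ceil{(r+1)/2}/r$ to each $\sigma\in\bigcup_{f\in M}\binom{f}{r-1}$, show that after redistributing weight along the configurations where two matching edges share neighborhoods one obtains a feasible fractional $(r-1)$-cover, and then round it. The redistribution must be calibrated against the extremal example $\binom{[r+1]}{r}$, the $r$-graph on $r+1$ vertices whose edges are all $r$-subsets of $[r+1]$: any two of its edges meet in $r-1$ vertices so $\nu^{(r-1)}=1$, while each $(r-1)$-subset of $[r+1]$ lies in exactly two edges so $\tau^{(r-1)}=\ceil{(r+1)/2}$, matching the conjecture with equality. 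For the rounding step I would appeal to LP duality on the shadow hypergraph $G^{(r-1)}$: its fractional cover and fractional matching numbers coincide and are sandwiched between $\nu^{(r-1)}(G)$ and $\tau^{(r-1)}(G)$, so it would suffice to round the fractional cover of $G^{(r-1)}$ integrally while losing no more than the slack built into the $\ceil{(r+1)/2}/r$ weights, using that every edge of $G^{(r-1)}$ is an $r$-clique.

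The main obstacle, I expect, is fatal in this generality: taking $G=T(H)$ for an arbitrary graph $H$ already reduces the $r=3$ case to Tuza's conjecture $\tau_t(H)\le 2\nu_t(H)$, which is wide open (Haxell's $66/23$ is the record), so no soft argument of the above shape can possibly close the gap unconditionally. A genuine proof therefore requires a new ingredient — either a complete structural classification, for every $r$, of the near-tight local configurations (the all-$r$ analogue of ``$K_4$ and $K_5$ are the only extremal graphs''), or a global probabilistic argument in the spirit of the Kahn–Park and Bennett–Cushman–Dudek resolution of Tuza for random graphs, transferred to arbitrary host hypergraphs by an absorption scheme.

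Absent such input, the realistic outcome of this plan is not a proof of the conjecture but the weaker results established below in the restricted model $H_r(n,p)$: the conjecture itself for $r\in\{3,4,5\}$, and $\tau^{(r-1)}(H_r(n,p))/\nu^{(r-1)}(H_r(n,p))\le Cr$ with $C<1$ for $r\ge6$, where a dense/sparse dichotomy tames exactly the interaction issues identified in the first paragraph.
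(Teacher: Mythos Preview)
The statement you were asked to prove is not a theorem of the paper but an open conjecture (Aharoni--Zerbib, cited as \cite[Conjecture~1.10]{az20}); the paper does not prove it, and indeed no proof is known. You recognize this explicitly, and your diagnosis is correct: the case $r=3$ specializes, via $G=T(H)$, to Tuza's conjecture for graphs, so any unconditional proof of the Aharoni--Zerbib conjecture would in particular resolve Tuza's conjecture, whose best known bound is Haxell's $66/23$. Consequently the ``take a maximum $(r-1)$-matching and prune half the $(r-1)$-shadows'' plan cannot succeed without a genuinely new idea, exactly as you say.

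Your auxiliary observations are accurate: the cover $\bigcup_{f\in M}\binom{f}{r-1}$ gives the trivial $r\nu$ bound, and the complete $r$-graph on $r+1$ vertices has $\nu^{(r-1)}=1$ and $\tau^{(r-1)}=\ceil{(r+1)/2}$, witnessing tightness. Your closing paragraph also matches the paper's actual content: what is proved is Conjecture~\ref{conj:generaltuzar} for $H_r(n,p)$ when $r\in\{3,4,5\}$ (Theorem~\ref{thm:mainsmall}) and the weaker $\tau/\nu\le Cr$ with $C<1$ for $r\ge 6$ (Theorem~\ref{thm:mainlarge}). So there is no ``paper's proof'' to compare against; your proposal is best read as a correct explanation of why the conjecture is open and a pointer to the partial results that the paper does establish.
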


If true, Conjecture~\ref{conj:generaltuzar} would be tight; for every $r$, there exists $r$-graphs $G$ with $\nu^{(r-1)}(G) = 1$ and $\tau^{(r-1)}(G) = \ceil{\frac{r+1}{2}}$. The complete $r$-graph on $r+1$ vertices is one such example; see {~\cite[Proposition~3.7]{az20}} for details. We refer to~\cite{bmssz22, gs20} for partial results on Conjecture~\ref{conj:generaltuzar} and its fractional variants.

\subsection{Results and organization} \label{subsec-results-org}

We study Conjecture~\ref{conj:generaltuzar} for random hypergraphs. Specifically, let $H_r(n, p)$ be the $r$-graph on a set $V$ of $n$ labelled vertices, where each $e \in \binom{V}{r}$ is an edge with probability $p=p(n)$, independent of other $e' \in \binom{V}{r}$ (so $H_2(n, p)$ is the Erd\H{o}s-R\'enyi, or Gilbert, random graph $G(n,p)$). For $3 \leq r \leq 5$ and with $G = H_r(n,p)$, we verify Conjecture~\ref{conj:generaltuzar}.
\begin{thm}
\label{thm:mainsmall}
For any $3 \leq r \leq 5$ and any $p = p(n)$, \[ \tau^{(r-1)}\left(H_r(n, p)\right) \leq \ceil{\frac{r+1}{2}} \nu^{(r-1)}\left(H_r(n, p)\right)\quad \mbox{ w.h.p. }\]
\end{thm}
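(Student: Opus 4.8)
The plan is to pair a \emph{deterministic} upper bound on $\tau^{(r-1)}$ with a lower bound on $\nu^{(r-1)}$ that depends on the edge density. For any $r$-graph $G$ on $n$ vertices, picking one $(r-1)$-subset from each edge gives $\tau^{(r-1)}(G)\le|E(G)|$; and if $F\subseteq\binom{V(G)}{r-1}$ contains no copy of $K_r^{(r-1)}$, the complete $(r-1)$-graph on $r$ vertices (i.e.\ no $r$-set has all its $(r-1)$-subsets in $F$), then $\binom{V(G)}{r-1}\setminus F$ is an $(r-1)$-cover of $G$. Taking $F$ extremal and using the classical Tur\'an-type construction (Mantel for $r=3$), which furnishes a $K_r^{(r-1)}$-free family with $\bigl(1-(\tfrac{r-2}{r-1})^{r-2}-o(1)\bigr)\binom{n}{r-1}$ sets, yields
\[
\tau^{(r-1)}(G)\;\le\;\min\Bigl\{\,|E(G)|,\ \bigl((\tfrac{r-2}{r-1})^{r-2}+o(1)\bigr)\binom{n}{r-1}\Bigr\}.
\]
Since $r(\tfrac{r-2}{r-1})^{r-2}=\tfrac32,\tfrac{16}{9},\tfrac{135}{64}$ for $r=3,4,5$, each at most $\ceil{(r+1)/2}$, the Tur\'an half already leaves slack; it remains to lower bound $\nu^{(r-1)}(H_r(n,p))$. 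I would do this through the auxiliary hypergraph $G^{(r-1)}$ (so $\nu^{(r-1)}(G)=\nu(G^{(r-1)})$), which is \emph{linear} --- two $(r-1)$-sets lie in a common edge of $G^{(r-1)}$ only if their union has size $r$, and then in exactly one --- hence locally tree-like, with the $G^{(r-1)}$-degree of $\sigma$ equal to $d_G(\sigma)\sim\Bin(n-r+1,p)$. Write $d=(n-r+1)p\sim np$ and split on $d$.

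\textbf{Easy ranges.} If $d\to0$, the expected number of pairs of edges of $H_r(n,p)$ meeting in $r-1$ vertices is $O\bigl(d\,\E|E(G)|\bigr)=o\bigl(\E|E(G)|\bigr)$, so w.h.p.\ deleting one edge from each such pair leaves an $(r-1)$-matching of size $(1-o(1))|E(G)|$, and with $\tau^{(r-1)}(G)\le|E(G)|$ the ratio tends to $1$. If $d\to\infty$, then $|E(G)|=(1+o(1))\tfrac dr\binom n{r-1}$ dominates, so $\tau^{(r-1)}(G)\le\bigl((\tfrac{r-2}{r-1})^{r-2}+o(1)\bigr)\binom n{r-1}$ w.h.p.; and since all but an $o(1)$-fraction of the $(r-1)$-sets have co-degree $(1+o(1))d=\omega(1)$, a R\"odl-nibble / semi-random matching argument in the nearly regular linear hypergraph $G^{(r-1)}$ produces a matching covering all but $o\bigl(\binom n{r-1}\bigr)$ of its vertices, whence $\nu^{(r-1)}(G)\ge(1-o(1))\tfrac1r\binom n{r-1}$ and the ratio is at most $r(\tfrac{r-2}{r-1})^{r-2}+o(1)\le\ceil{(r+1)/2}$. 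Both ranges work for every $r$.

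\textbf{The obstacle: $d=\Theta(1)$.} The remaining range --- $d$ bounded away from $0$ and $\infty$ --- is the crux, and here the crude bounds above are \emph{not} enough: $G^{(r-1)}$ is then a sparse random linear $r$-graph on $\binom n{r-1}$ vertices with essentially $\Poisson(d)$ degrees and Galton--Watson-hypertree local structure, and one needs sharp asymptotics for \emph{both} parameters. Flipping one potential edge of $H_r(n,p)$ changes each of $\tau^{(r-1)}$, $\nu^{(r-1)}$ by at most $1$, so by McDiarmid's inequality both concentrate to within $o\bigl(\binom n{r-1}\bigr)$ of their means and the problem becomes asymptotic. I would bound $\nu^{(r-1)}(G)=\nu(G^{(r-1)})$ from below, and $\tau^{(r-1)}(G)=\binom n{r-1}-\alpha(G^{(r-1)})$ from above, by analysing Karp--Sipser-type leaf-removal/random-greedy algorithms on $G^{(r-1)}$ (the differential-equation / local-weak-limit method), getting $\nu^{(r-1)}(G)\ge(\gamma(d)-o(1))\binom n{r-1}$ and $\tau^{(r-1)}(G)\le(\beta(d)+o(1))\binom n{r-1}$ for explicit $\gamma,\beta$ with $\gamma(c),\beta(c)=(1+o(1))\tfrac cr$ as $c\to0$, and $\gamma(c)\to\tfrac1r$, $\beta(c)\le(\tfrac{r-2}{r-1})^{r-2}+o(1)$ as $c\to\infty$. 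The theorem then reduces to the numerical inequality
\[
\beta(c)\;\le\;\ceil{\tfrac{r+1}{2}}\,\gamma(c)\qquad\text{for all }c>0.
\]
The hard part will be producing bounds on $\gamma$ and $\beta$ sharp enough to verify this: it holds for $r\in\{3,4,5\}$ but fails --- at least for the available bounds, and for some intermediate $c$ --- once $r\ge6$, which is precisely why for $r\ge6$ one settles for the weaker estimate $\tau^{(r-1)}/\nu^{(r-1)}\le Cr$.
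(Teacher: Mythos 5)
Your skeleton matches the paper's: split on $d=(n-r+1)p$, handle $d\to 0$ by near-disjointness of edges, handle $d\to\infty$ by a deterministic Tur\'an-system cover of density bounded away from $\ceil{(r+1)/2}/r$ together with a Pippenger/nibble-type argument showing $\nu^{(r-1)}\sim\frac1r\binom{n}{r-1}$ (the paper does exactly this, via a fractional version of Pippenger's theorem that also disposes of the irregular-degree issue you would face in the barely-superconstant range $d=\omega(1)$, $d=O(\log n)$). Those two regimes are fine in outline, and your observation that the crude bounds $\tau\le\min\{|E|,\text{Tur\'an}\}$ fail at intermediate densities is correct (already at $r=3$, $d=1$: $\min\{d/3,1/2\}=1/3$ exceeds $2\cdot\frac13(1-3^{-1/2})\approx 0.28$).

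But that is precisely where the proposal stops being a proof: the regime $d=\Theta(1)$, which you correctly identify as the crux, is only gestured at. You never produce the explicit functions $\gamma(d)$ and $\beta(d)$, never describe the constructions behind them, and explicitly defer the verification of $\beta(c)\le\ceil{(r+1)/2}\gamma(c)$ --- yet this is essentially the entire content of the theorem. The paper's route is concrete on both sides: for the matching it analyzes a uniformly weighted random greedy $(r-1)$-matching, couples the local neighborhood of an $(r-1)$-set with a Poisson$(d)$ Galton--Watson hypertree, and solves the resulting survival ODE to get $\nu\gtrsim\frac1r\bigl(1-((r-1)d+1)^{-1/(r-1)}\bigr)\binom{n}{r-1}$; for the cover it does \emph{not} run an optimization/leaf-removal algorithm on $G^{(r-1)}$, but instead takes the explicit partition-based covers (Mantel/Tur\'an/Giraud) and prunes from them the $(r-1)$-sets whose incident edges are all locally redundant (adding back a small correction class to restore the covering property), computing the resulting density $\beta_r(d)$ again via the local tree coupling; finally a finite computation checks $r\beta_r(d)/\bigl(1-((r-1)d+1)^{-1/(r-1)}\bigr)\le\ceil{(r+1)/2}$ over the relevant range of $d$. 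Your proposed alternative for the cover side --- a Karp--Sipser/differential-equations analysis of the hypergraph vertex cover (equivalently independence number) of $G^{(r-1)}$ --- is not known to yield tractable explicit bounds in this setting, so even the \emph{method} you name is unsubstantiated there, and without the explicit $\gamma,\beta$ and the numerical check the theorem for $3\le r\le 5$ is not established. (Minor additional points: in the $d\to0$ regime your Markov/first-moment step needs the extra case split the paper makes when $\E|E|$ is not large, and the claimed cover density $(\tfrac{r-2}{r-1})^{r-2}$ for $r=5$ should be justified or replaced by Giraud's $5/16$; both are easily repaired.)
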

\begin{remark}
In fact, our proof of Theorem~\ref{thm:mainsmall} shows, numerically, that the truth is strictly less than $\ceil{\frac{r+1}{2}}$. See Appendix~\ref{app:smallr} for details. This mirrors the situation for the original Tuza conjecture, where there is $C<2$ such that for all $p$, $\tau(G_{n,p}) \leq C\nu(G_{n,p})$ with high probability (see \cite{kp20}).
\end{remark}

We also show that our methods can be extended for larger $r$, to give a non-trivial upper bound on $\tau^{(r-1)}\left(H_r(n, p)\right)/\nu^{(r-1)}\left(H_r(n, p)\right)$ (w.h.p.). Unfortunately, while we believe Conjecture~\ref{conj:generaltuzar} to be true, our methods do not seem sufficient to obtain it.
\begin{thm}
\label{thm:mainlarge}
There exists an absolute constant $C < 1$ such that, for any $r \geq 6$ and any $p = p(n)$, 
\begin{equation} \label{eq-mainlarge1}
\tau^{(r-1)}\left(H_r(n, p)\right) \leq C r  \nu^{(r-1)}\left(H_r(n, p)\right)\quad \mbox{ w.h.p. }
\end{equation}
Furthermore, for every $\varepsilon>0$, there is $r_0 = r_0(\varepsilon)$ such that, for all $r \geq r_0$ and $p = p(n)$, 
\begin{equation} \label{eq-mainlarge2}
\tau^{(r-1)}\left(H_r(n, p)\right)  \leq \left(\frac{1}{2}+\varepsilon\right)r \nu^{(r-1)}\left(H_r(n, p)\right) \mbox{ w.h.p.}
\end{equation}
\end{thm}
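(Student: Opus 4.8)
Write $H:=H_r(n,p)$ and let $\lambda=\lambda(n):=(n-r+1)p$ be the expected co-degree of an $(r-1)$-set. We always view $\nu^{(r-1)}$ and $\tau^{(r-1)}$ through the $r$-uniform hypergraph $H^{(r-1)}$, so the trivial bound to beat is $\tau^{(r-1)}(H)\le r\,\nu^{(r-1)}(H)$. The plan is to split on $\lambda$ and use genuinely different arguments in a ``dense'' regime (where $\lambda$ exceeds a threshold depending on $r$) and a ``sparse'' regime; the bound $C<1$ is absolute because both regimes produce ratios of the form $(1+o(1))\,c\,r$ with $c$ bounded away from $1$ uniformly in $r$.

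\textbf{Dense regime.} Bound $\tau$ and $\nu$ separately. Since $\tau^{(r-1)}$ is monotone under adding edges, $\tau^{(r-1)}(H)\le\tau^{(r-1)}(K_n^{(r)})$ for the complete $r$-graph $K_n^{(r)}$ on $[n]$; and an $(r-1)$-cover of $K_n^{(r)}$ is exactly the complement of a $K_r^{(r-1)}$-free $(r-1)$-graph (where $K_r^{(r-1)}$ denotes the $(r-1)$-graph on $r$ vertices containing all $r$ of its $(r-1)$-subsets), so $\tau^{(r-1)}(K_n^{(r)})=\binom{n}{r-1}-\mathrm{ex}(n,K_r^{(r-1)})\le\bigl(1-\pi(K_r^{(r-1)})+o(1)\bigr)\binom{n}{r-1}$. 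For the matching lower bound, restrict $H^{(r-1)}$ to the $(r-1)$-sets of near-average co-degree and delete the (few) edges meeting any other $(r-1)$-set; for $\lambda$ large this leaves an almost-spanning, almost-regular, near-linear $r$-uniform subhypergraph, and the R\"odl nibble (Pippenger--Spencer) yields a matching covering all but an $o(1)$-fraction of its vertices, so $\nu^{(r-1)}(H)\ge(1-o(1))\binom{n}{r-1}/r$; this extends to all larger $p$ by the obvious coupling. Hence the ratio is at most $(1+o(1))\,r\bigl(1-\pi(K_r^{(r-1)})\bigr)$. Known constructions give $\pi(K_r^{(r-1)})\ge\tfrac12$ for every $r\ge3$, so this is at most $(\tfrac12+o(1))r$, which already suffices for \eqref{eq-mainlarge1}; and since $\pi(K_r^{(r-1)})\to1$ as $r\to\infty$ (Sidorenko/de Caen-type hypergraphs), for $r\ge r_0(\varepsilon)$ the ratio drops below $(\tfrac12+\varepsilon)r$, giving \eqref{eq-mainlarge2}.

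\textbf{Sparse regime.} When $r\lambda$ is below a small constant, almost every edge $e$ is \emph{light} (no other edge meets $e$ in $r-1$ vertices). Light edges form an $(r-1)$-matching, and the union of the collection of $(r-1)$-sets of co-degree at least $2$ with one $(r-1)$-subset of each light edge is an $(r-1)$-cover, of size (number of $(r-1)$-sets of co-degree $\ge2$) $+$ (number of light edges); the first term is $o(|E(H)|)=o(\text{number of light edges})$, so the ratio tends to $1$. For the remaining (intermediate) range of $\lambda$ one would instead exploit that in the set-cover reformulation of $\tau^{(r-1)}$ each edge of $H$ lies in exactly $r$ candidate $(r-1)$-sets (its $(r-1)$-subsets): after throwing into the cover the $O_r(1)$ many $(r-1)$-sets of atypically large co-degree, LP rounding for the resulting bounded-co-degree instance, together with the Tur\'an bound above as a second source of covers, should be combined with a matching lower bound for the random (linear) hypergraph $H^{(r-1)}$.

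\textbf{Main obstacle.} The crux is precisely this intermediate range of $\lambda$ --- between a large constant and $\Theta(\log r)$ --- where the nibble is unavailable (the co-degrees of $H^{(r-1)}$ are not concentrated) and the light-edge cover is far too large. There one has to pit $\tau^{(r-1)}(H)$ and $\nu^{(r-1)}(H)$ against each other rather tightly, controlling simultaneously the integrality gap of the covering LP and the loss from using a crude (e.g.\ greedy) matching bound, and --- this is the delicate point --- keeping \emph{every} implied constant uniform in $r$, so as to obtain a single absolute $C<1$ (and, for $r\ge r_0(\varepsilon)$, to stay below $\bigl(\tfrac12+\varepsilon\bigr)r$).
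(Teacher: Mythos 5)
Your two extreme regimes are essentially the paper's: in the dense range ($d=(n-r+1)p\to\infty$) the paper also combines a nibble-type matching bound ($\nu^{(r-1)}(H)\sim\frac1r\binom{n}{r-1}$, Theorem~\ref{thm:matching-large-p}) with covers coming from Tur\'an-type constructions (Frankl--R\"odl/Sidorenko, Theorem~\ref{thm:cover-absolute}), which is exactly the content of your bound via $1-\pi(K_r^{(r-1)})\le(\tfrac12+o_r(1))\frac{\ln r}{r}$; and your sparse argument plays the same role as the paper's Theorem~\ref{thm:small-p}. (Two small quibbles there: the nibble needs $d=\omega(1)$ in $n$, not merely ``$\lambda$ above a threshold depending on $r$''; and your claim that the number of $(r-1)$-sets of co-degree $\ge 2$ is $o(|E(H)|)$ holds only when $r\lambda=o(1)$ --- for $r\lambda$ a fixed small constant it is a small but non-vanishing fraction, which is still harmless for the theorem.)

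The genuine gap is the one you yourself flag: the entire regime $d=\Theta(1)$ above the sparse threshold is not handled, and this is where the bulk of the work lies. Your suggestion (LP rounding plus a ``crude greedy matching bound'') is not developed and does not obviously beat the trivial inequality $\tau\le r\nu$ with a constant uniform in $r$; in particular a greedy/maximal matching argument alone only recovers $\nu\ge\tau/r$-type information. What the paper does here is prove a quantitative matching lower bound valid for constant co-degree: analyzing the random greedy $(r-1)$-matching via a coupling of local neighborhoods of $H$ with a Poisson Galton--Watson-type tree (extending Kahn--Park), it shows $\nu\ge(1+o(1))\frac1r\alpha_r(d)\binom{n}{r-1}$ with $\alpha_r(d)=1-((r-1)d+1)^{-1/(r-1)}$ (Theorem~\ref{thm:matching-medium-p}). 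This is paired with a cover bound that scales with $|D(H)|\sim(1-e^{-d})\binom{n}{r-1}$ rather than $\binom{n}{r-1}$ (your complete-hypergraph/Tur\'an-density bound loses this factor, which is essential when $d$ is small), and with a monotonicity argument showing $(1-e^{-d})/\alpha_r(d)$ is increasing on $[1/(r-1),2]$, reducing everything to $d=2$ where $\alpha_r(2)\gtrsim\frac{\ln(2r-1)}{r-1}$; only then do the ratios come out as $Cr$ with $C<1$ uniformly, and as $(\tfrac12+\varepsilon)r$ for large $r$. Without an ingredient of this kind, \eqref{eq-mainlarge1} and \eqref{eq-mainlarge2} are not established in the constant-$d$ range, so the proposal as it stands is incomplete.
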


\begin{remark}
For $r = 6$ we show $C$ may be taken to be 0.781; for $7 \leq r \leq 270$, $C$ may be taken to be 0.938; for $r \geq 271$, $C$ may be taken to be 0.747. With more involved analysis these numbers could certainly be improved; for example, for $r \geq 1000$, $C$ can be taken to be 0.6964, and by appeal to a numerical optimization tool such as {\tt Mathematica} we can obtain an improved bound for any particular $r$. These calculations suggest, for example, that for $r \geq 84$, we may take $C<0.60$, and verify that for $r\geq 12$ we may take $C<0.70$. Since it is very unlikely that our present methods are optimal, we forgo this more detailed analysis, and just make a short report on the process in Appendix~\ref{app:computations}. 
\end{remark}

\medskip

Let $d = d(r,n,p) = (n - (r-1))p$ be the expected co-degree of an $(r-1)$-set of vertices of $H_r(n, p)$.
We consider three ranges of $d$, each requiring different approaches. Our methods are inspired by those of Kahn-Park~\cite{kp20}, and some of the following results are immediate generalizations of the results therein (Theorems~\ref{thm:small-p},~\ref{thm:matching-large-p},~and~\ref{thm:matching-medium-p}). To keep the paper self-contained, we give complete proofs.
Our main contribution consists of obtaining bounds, for $r > 3$, on both $\tau^{(r-1)}(H_r(n,p))$ and $\tau^{(r-1)}(G)$ for arbitrary $r$-graph $G$  (Theorems~\ref{thm:cover-absolute},~\ref{thm:cover-medium-p-small-r},~and~\ref{thm:cover-medium-p-large-r}).

That $\tau^{(r-1)}(H_r(n,p)) \geq \nu^{(r-1)}(H_r(n,p))$ is trivial. For small $d$, we obtain the following optimal bound.
\begin{thm}
\label{thm:small-p} 
For every $r \geq 3$, if $d \leq 1/(r-1)$, then w.h.p.  
$$
\nu^{(r-1)}(H_r(n,p)) \sim \tau^{(r-1)}(H_r(n,p)).
$$
\end{thm}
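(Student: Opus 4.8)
The plan is to show that when the expected co-degree $d = (n-(r-1))p$ is very small, almost every edge of $H$ is "isolated" in the strong sense that the $(r-1)$-sets it contains all have co-degree $1$, so each such edge is its own component in $H^{(r-1)}$ and contributes equally to both $\nu$ and $\tau$. Concretely, call an edge $e$ of $H$ \emph{lonely} if every $\sigma \in \binom{e}{r-1}$ satisfies $d_H(\sigma) = 1$ (equivalently, no other edge of $H$ shares $r-1$ vertices with $e$). A lonely edge forms an isolated edge in the auxiliary $(r-1)$-graph $H^{(r-1)}$; hence the lonely edges together contribute exactly their number to $\nu(H^{(r-1)}) = \nu(H)$ and also to $\tau(H^{(r-1)}) = \tau(H)$ (each isolated edge needs exactly one $(r-1)$-set to cover it, and distinct isolated edges cannot share such a set). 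Writing $Y$ for the number of lonely edges and $Z$ for the number of non-lonely edges, we get $Y \le \nu(H) \le \tau(H) \le Y + rZ$, since each non-lonely edge can be covered by at most $r$ of its $(r-1)$-subsets. Thus it suffices to prove $Z = o(Y)$ w.h.p., which gives $\tau(H)/\nu(H) \le 1 + rZ/Y \to 1$, i.e. $\nu(H) \sim \tau(H)$.

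First I would compute the expected number of edges. Let $N = \binom{n}{r}$, so $\E[|E(H)|] = Np$. A standard first-moment/second-moment (or Chernoff) argument shows $|E(H)| \sim Np$ w.h.p.\ provided $Np \to \infty$; if $Np$ stays bounded then the hypergraph has $O(1)$ edges and the statement is essentially trivial to check directly, so we may assume $Np \to \infty$. Next I would bound $\E[Z]$. An edge $e$ is non-lonely iff some $(r-1)$-subset $\sigma \subset e$ lies in another edge $e'$. Fixing $e$ and one of its $r$ subsets $\sigma$, the number of potential $e' \ne e$ with $\sigma \subset e'$ is $n - r$, each present with probability $p$; so by a union bound $\P(e \text{ non-lonely} \mid e \in E(H)) \le r(n-r)p \le rd$. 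Therefore $\E[Z] \le Np \cdot rd \le r d \cdot Np$, while $\E[Y] = \E[|E(H)|] - \E[Z] \ge Np(1 - rd)$. Under the hypothesis $d \le 1/(r-1)$ we have $rd \le r/(r-1)$, which is not quite $<1$; so I would instead use the hypothesis to get $rd \le r/(r-1) \le 2$ and, more importantly, observe that the relevant ratio $Z/Y$ need only be shown to be $o(1)$, and with a cleaner count $\P(e \text{ non-lonely}) \le r(n-r)p \le \frac{r}{r-1}d \cdot \frac{n-r}{n-(r-1)} $. The key point the hypothesis $d \le 1/(r-1)$ is really buying is not smallness of $\E[Z]/\E[Y]$ directly but control on the \emph{component structure}: I would actually want to rule out long "chains" of edges pairwise sharing $(r-1)$-sets, and the threshold $1/(r-1)$ is exactly where the branching process governing such chains becomes subcritical.

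So the real argument I would run is a branching/exploration argument in $H^{(r-1)}$. Root at an arbitrary edge $e_0$; its neighbors in $H^{(r-1)}$ are edges sharing some $(r-1)$-subset, and $e_0$ has $r$ such subsets, each with expected $(n-r)p \le d$ further edges, so the expected offspring is at most $rd \le r/(r-1)$. Hmm --- this is $>1$, so I would need to be more careful: the right parameter is that, conditioned on reaching an edge $e$ along a subset $\sigma$, the \emph{new} subsets of $e$ available are only $r-1$ (the subset $\sigma$ is "used up"), each spawning expected $\le d$ children, giving branching mean $(r-1)d \le 1$. This is the subcritical/critical regime, so w.h.p.\ all components of $H^{(r-1)}$ have size $O(\log n)$, and more quantitatively the number of edges in components of size $\ge 2$ is $o(Np)$ --- indeed the probability a fixed edge lies in a component of size $\ge 2$ is $O(d) = o(1)$ when $d = o(1)$, and when $d = \Theta(1)$ (i.e. $d$ near $1/(r-1)$) one shows it is bounded below $1$ by a constant, but then refines: the per-component contributions to $\tau$ and $\nu$ differ by at most a bounded factor on each small component, and a direct computation on components of size $k$ (there are expected $o(Np)$ edges in components of size $\ge k_0$ for large $k_0$) shows the total discrepancy $\tau - \nu$ is $o(Np) = o(\nu)$. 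I would conclude by Markov's inequality that $\tau(H) - \nu(H) = o(Np)$ w.h.p., while $\nu(H) \ge Y \sim Np$ w.h.p., giving $\nu(H) \sim \tau(H)$.

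The main obstacle is the borderline case $d$ close to $1/(r-1)$, where the exploration process is critical rather than strictly subcritical: here one cannot simply say "almost every edge is lonely," and must instead argue that even though a positive fraction of edges lie in larger components, those components are small (say $O(\log n)$) and, crucially, that on each small component the local covering number exceeds the local matching number by only a $o(1)$ proportion on average. Making that averaging rigorous --- controlling $\sum_{\text{components } K} (\tau(K) - \nu(K))$ via the subcritical tail of component sizes and the trivial bound $\tau(K) - \nu(K) \le (r-1)|E(K)|$ on each component --- is the delicate part, and is presumably why the theorem is stated with the sharp constant $1/(r-1)$ rather than something smaller. Everything else (concentration of $|E(H)|$, the first-moment bound on $\E[Z]$, the sandwich $Y \le \nu \le \tau \le Y + rZ$) is routine.
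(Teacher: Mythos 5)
Your overall skeleton (split $H$ into connected components of the $(r-1)$-intersection structure, bound the total discrepancy $\tau-\nu$ by a per-component analysis, and show $\nu=\Omega(n^{r-1})$ so the discrepancy is relatively negligible) is the same as the paper's, and your treatment of the subcritical case $d=o(1)$ via lonely/non-lonely edges is essentially the paper's second case and is fine. The genuine gap is in the regime $d=\Theta(1)$, $d\le 1/(r-1)$, which is exactly where the theorem has content. There a constant fraction of edges lie in components of size at least $2$, so your plan of ``controlling $\sum_K(\tau(K)-\nu(K))$ via the subcritical tail of component sizes and the trivial bound $\tau(K)-\nu(K)\le (r-1)|E(K)|$'' cannot work: the tail bound only kills components of size $\ge k_0$, while the components of size between $2$ and $k_0$ still carry $\Theta(n^{r-1}p\cdot n)=\Theta(\binom{n}{r}p)$ edges in total, and on those the trivial bound gives a discrepancy of the same order, not $o(\cdot)$. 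You assert that ``on each small component the local covering number exceeds the local matching number by only a $o(1)$ proportion on average,'' but you never supply a reason, and it is false for arbitrary small components (e.g.\ a component isomorphic to the complete $r$-graph on $r+1$ vertices has $\nu=1$ and $\tau=\lceil (r+1)/2\rceil$). The missing idea is structural: one must show that w.h.p.\ almost all $(r-1)$-sets lie in components that are \emph{trees} in the hypergraph sense (in the paper this comes from a density/first-moment argument plus a coupling of the neighborhood exploration with a Galton--Watson-type tree, which is finite a.s.\ precisely when $(r-1)d\le 1$), and then invoke the exact identity $\tau(T)=\nu(T)$ for every finite tree (the paper's Proposition~\ref{prop:finitetree}, proved by induction). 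With that, the discrepancy is supported only on the non-tree components, whose expected total size is $o(n^{r-1})$; nothing weaker than an exact (or $1+o(1)$-multiplicative) per-component equality on the typical components will do.

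Two smaller points: your closing claim ``$\nu(H)\ge Y\sim Np$ w.h.p.'' contradicts your own earlier (correct) observation that for $d=\Theta(1)$ only a constant fraction of edges are lonely; what you actually need, and what the paper proves via isolated edges plus Lipschitz concentration, is just $\nu(H)=\Omega(n^{r-1})$ w.h.p. Also note that your offspring-count fix (only $r-1$ fresh $(r-1)$-sets per newly reached edge, giving branching mean $(r-1)d\le 1$) is the right heuristic and matches the paper's coupling, but at the critical value $d=1/(r-1)$ you cannot conclude ``all components have size $O(\log n)$''; the paper avoids needing any such size bound by using only almost-sure finiteness of the limiting tree together with the tree identity.
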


Note that throughout the paper, we use the standard Bachmann–Landau notation to describe the limiting behavior of functions as $n \rightarrow \infty$. We also write $A \sim B$ if $A/B \rightarrow 1$ as $n \rightarrow \infty$, and $A \lesssim B$ if $A \leq (1 - o(1))B$ as $n \rightarrow \infty$.

For large $d$, we use a variant of Pippenger's Theorem~\cite{kahn96pippenger} to show that there is an $(r - 1)$ matching that contains almost every $(r - 1)$-set, implying the following. 
\begin{thm}
\label{thm:matching-large-p}
For every $r \geq 3$, if $d = \omega(1)$, then w.h.p. $\nu^{(r-1)}(H_r(n,p)) \sim \frac{1}{r} \binom{n}{r-1}$.
\end{thm}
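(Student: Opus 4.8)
The plan is to apply a Pippenger-type theorem (Rödl nibble / semi-random method) to the auxiliary $r$-graph $H^{(r-1)}$ from Section~\ref{subsec-results-org}, recalling that $\nu(H)=\nu^{(r-1)}(H)=\nu(H^{(r-1)})$ and that the $r$ vertices $\binom{e}{r-1}$ of an edge of $H^{(r-1)}$ are distinct, and distinct for distinct $e$, so that $H^{(r-1)}$ is a genuine simple $r$-graph on the $\binom{n}{r-1}$ vertices $\binom{V}{r-1}$. Since each edge covers exactly $r$ vertices, $\nu(H^{(r-1)})\le \tfrac1r\binom{n}{r-1}$ always, so it suffices to prove $\nu(H^{(r-1)})\ge(1-o(1))\tfrac1r\binom{n}{r-1}$ w.h.p. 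Two structural facts make this the right setting. First, the degree of a vertex $\sigma\in\binom{V}{r-1}$ in $H^{(r-1)}$ is exactly the co-degree $d_H(\sigma)\sim\Bin(n-r+1,p)$, with mean $d=\omega(1)$. Second — and crucially — the codegree of $H^{(r-1)}$ is at most $1$ deterministically: two distinct $(r-1)$-sets $\sigma_1\ne\sigma_2$ satisfy $|\sigma_1\cup\sigma_2|\ge r$, so they lie in a common edge $\binom{e}{r-1}$ only when $|\sigma_1\cup\sigma_2|=r$ and $e=\sigma_1\cup\sigma_2\in E(H)$, and then in exactly one. Thus the pair-codegree $1$ is $o(d)$ while the typical degree $d$ tends to infinity — precisely the regime of Pippenger's theorem, modulo near-regularity of the degree sequence.

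The near-regularity is the step that uses randomness, and it is immediate. Fix $\varepsilon>0$ and set $W=\{\sigma\in\binom{V}{r-1}:d_H(\sigma)\in[(1-\varepsilon)d,(1+\varepsilon)d]\}$. Since $d_H(\sigma)\sim\Bin(n-r+1,p)$ has mean $d$ and variance at most $d$, Chebyshev's inequality gives $\P(\sigma\notin W)\le 1/(\varepsilon^2 d)=o(1)$, uniformly over $\sigma$ and over the entire regime $d=\omega(1)$ (so no case analysis on the size of $d$ is needed). Hence $\E\,|\binom{V}{r-1}\setminus W|=o(\binom{n}{r-1})$, and Markov's inequality shows that w.h.p.\ all but $o(\binom{n}{r-1})$ of the $(r-1)$-sets lie in $W$; in particular, after deleting the sublinear set $\binom{V}{r-1}\setminus W$, every surviving vertex of $H^{(r-1)}$ has degree at most $(1+\varepsilon)d$. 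Feeding this into the variant of Pippenger's theorem of~\cite{kahn96pippenger} — which produces a matching covering all but an $o(1)$-fraction of the vertices of an $r$-graph whose codegree is $o(D)$ and all but an $o(1)$-fraction of whose vertices have degree $(1\pm o(1))D$, its hypotheses being verified below — yields a matching of $H^{(r-1)}$ covering $(1-o(1))\binom{n}{r-1}$ vertices, that is, an $(r-1)$-matching of $H$ of size $(1-o(1))\tfrac1r\binom{n}{r-1}$. Letting $\varepsilon\to0$ completes the argument.

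The point requiring care — and the likely technical heart — is discharging the hypotheses of whichever Pippenger-type statement one invokes. If the version used insists on genuine near-regularity (all degrees within $(1\pm\varepsilon)D$, plus a uniform degree bound), one restricts $H^{(r-1)}$ to the vertex set $W$, but an edge $\binom{e}{r-1}$ survives only when all $r$ of its $(r-1)$-subsets lie in $W$, so the degree of $\sigma\in W$ can drop from $d_H(\sigma)$ by the number of co-neighbours $v\in N(\sigma)$ for which some $(\sigma\setminus\{w\})\cup\{v\}$ with $w\in\sigma$ fails to lie in $W$. A short first-moment estimate — for fixed $\sigma$ the sets $(\sigma\setminus\{w\})\cup\{v\}$, $v\in N(\sigma)$, $w\in\sigma$, are individually in $W$ with probability $1-o(1)$ and are only mildly dependent, so the expected number of lost co-neighbours is $o(d)$ — followed by Markov over $\sigma$ shows that w.h.p.\ all but $o(\binom{n}{r-1})$ vertices of $W$ retain degree $(1-O(\varepsilon))d$ in the restriction; discarding the remaining $o(\binom{n}{r-1})$ low-degree vertices (or invoking a defect form of Pippenger's theorem tolerating them) then places us exactly in the theorem's setting. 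This bookkeeping around the propagation of near-regularity, rather than the nibble itself, is where essentially all of the work lies; the codegree and typical-degree estimates above are free.
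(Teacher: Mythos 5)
Your proposal is correct in outline and lands on the same high-level strategy as the paper --- apply a Pippenger-type theorem to the auxiliary $r$-graph $H^{(r-1)}$, exploiting that its codegrees are at most $1$ while typical degrees equal the co-degrees $d_H(\sigma)\sim\Bin(n-r+1,p)$ with mean $d=\omega(1)$ --- but it runs through a different key lemma, which changes where the work sits. The paper invokes the \emph{fractional} version (Theorem~\ref{thm:pippenger}, from \cite{kahn96pippenger}): one only needs a fractional matching $\varphi$ with $\alpha(\varphi)\rightarrow 0$, so the paper assigns weight $1/D$, $D=(1+\varepsilon)d$, to every edge of $H^{(r-1)}$ containing no ``heavy'' vertex and weight $0$ otherwise; the whole proof then reduces to an upper-tail Chernoff estimate showing that the edges meeting heavy vertices contribute $o(n^rp)$, with no lower-tail concentration, no restriction of the hypergraph, and no exceptional-vertex bookkeeping. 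You instead use the classical near-regular (defect) form of Pippenger's theorem, so you need two-sided degree control (your Chebyshev step), the restriction to $W$ to cap the maximum degree (important, since the maximum co-degree can greatly exceed $d$ when $d$ grows slowly), and the ``lost co-neighbour'' estimate to show degrees survive the restriction; these steps are sound --- note that linearity of expectation alone gives $\E[\mathrm{lost}_\sigma]\le (r-1)np\cdot O(1/(\varepsilon^2 d))=o(d)$, so no appeal to ``mild dependence'' is needed, and conditioning on $v\in N(\sigma)$ only shifts the degree of $(\sigma\setminus\{w\})\cup\{v\}$ by $1$, harmless since $\varepsilon d\rightarrow\infty$. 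Two cautions: you should invoke a statement that explicitly tolerates an $o(1)$ fraction of exceptional vertices together with a global $O(D)$ maximum-degree bound (which your restriction supplies), rather than iteratively ``discarding low-degree vertices'', since deletions can cascade; and the result you attribute to \cite{kahn96pippenger} is not its Theorem~1.5 (that is the fractional statement the paper uses), so cite instead the Pippenger--Spencer/defect form, e.g.\ as in \cite{alonspencer4}. With that form in hand, your fixed-$\varepsilon$, then $\varepsilon\rightarrow 0$, quantifier handling and the trivial bound $\nu\le\tfrac{1}{r}\binom{n}{r-1}$ complete the proof; the paper's fractional route simply buys a shorter argument by making the degree irregularity irrelevant.
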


To bound $\tau^{(r-1)}$, we show that known constructions for the hypergraph Tur\'an problem~\cite{keevash2011} can be adapted to obtain $(r-1)$-covers for arbitrary $r$-graphs. Recall from earlier that $D(G)$ is the union over edges $e$ of $G$ of the $(r-1)$-subsets of $e$.
\begin{thm}
\label{thm:cover-absolute}
Let $r \geq 3$ and $l \leq r/2$. Then, for any $r$-graph $G$, \[ \tau^{(r-1)}(G) \leq \frac{|D(G)|}{2}\left[ \frac{1}{l} + \left(3 + \frac{r-1}{l-1}\right)\left(1 - \frac{1}{l}\right)^{r-1}\right]. \]
Additionally, 
\[ \tau^{(r-1)}(G) \leq
\begin{cases} 
\frac{1}{2}|D(G)| &\text{if }r = 3, \\
\frac{4}{9}|D(G)| &\text{if }r = 4,\\
\frac{5}{16}|D(G)| &\text{if }r = 5. \\
\end{cases}\]
\end{thm}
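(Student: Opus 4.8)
The plan is to convert the statement into a purely extremal fact about dense $K_r^{(r-1)}$-free hypergraphs and then feed in (randomized forms of) standard constructions from the hypergraph Tur\'an problem. For the reduction, it suffices to exhibit, for each $n$, a random family $\mathcal C\subseteq\binom{V}{r-1}$ (on any finite probability space) with two properties: (a) with probability $1$, every $r$-subset of $V$ contains a member of $\mathcal C$ — equivalently, $\binom{V}{r-1}\setminus\mathcal C$ is $K_r^{(r-1)}$-free, i.e.\ contains no $\binom{e}{r-1}$; and (b) for every fixed $\sigma\in\binom{V}{r-1}$, $\P[\sigma\in\mathcal C]\le c$, where $c$ is the coefficient in question. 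Indeed, given such $\mathcal C$ and any $r$-graph $G$ on $V$, put $C:=\mathcal C\cap D(G)$. Each edge $e$ of $G$ is an $r$-set and so contains some $\sigma\in\mathcal C$; since $\sigma\subseteq e$ forces $d_G(\sigma)\ge 1$, that $\sigma$ lies in $D(G)$, hence in $C$. So $C$ is always an $(r-1)$-cover of $G$, while $\E|C|=\sum_{\sigma\in D(G)}\P[\sigma\in\mathcal C]\le c\,|D(G)|$, so some outcome gives an $(r-1)$-cover of $G$ of size at most $c\,|D(G)|$. (Equivalently, fix one extremal $K_r^{(r-1)}$-free $(r-1)$-graph and apply a uniformly random relabelling of $V$; the probability that a given $\sigma$ lies in the complement is then exactly one minus the edge density.) Everything thus reduces to constructing dense, sufficiently symmetric $K_r^{(r-1)}$-free $(r-1)$-graphs.

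\emph{The cases $r\in\{3,4,5\}$.} Here $c = 1-\pi$ for the best available lower bounds $\pi$ on the Tur\'an density of $K_r^{(r-1)}$, namely $\pi=\tfrac12,\tfrac59,\tfrac{11}{16}$, giving $c=\tfrac12,\tfrac49,\tfrac5{16}$. For $r=3$, let $\mathcal C$ be the monochromatic pairs of a uniform random $2$-coloring of $V$: its complement is bipartite, hence triangle-free, every triple has a monochromatic pair (pigeonhole), and $\P[\{v,w\}\text{ monochromatic}]=\tfrac12$ exactly. For $r=4$, let $\mathcal C$ be the complement of Tur\'an's $K_4^{(3)}$-free $3$-graph on a uniform random partition of $V$ into three color classes; a short computation over the color profile of a triple gives $\P[\sigma\in\mathcal C]=\tfrac49$ exactly. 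For $r=5$, let $\mathcal C$ be the complement of the density-$\tfrac{11}{16}$ $K_5^{(4)}$-free $4$-graph recorded in~\cite{keevash2011}, randomized over vertex labels, so $\P[\sigma\in\mathcal C]=\tfrac5{16}$. Since these probabilities are exact, the reduction gives the bounds with no error term and for every $n$.

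\emph{General $r$.} For $2\le l\le r/2$, use a uniform random $l$-coloring $\phi\colon V\to\ZZ_l$ (together, if convenient, with an independent uniform random linear order on $V$ for tie-breaking) and define membership of an $(r-1)$-set $\sigma$ in $\mathcal C$ by an explicit rule depending only on the colors (and order) of its vertices: a cyclic/modular rule handling $r$-sets whose colors cover all of $\ZZ_l$, plus a second rule keyed to a color missing from the set — the adaptation of the partition-based Tur\'an-system constructions of~\cite{keevash2011}. Then check (a) by case analysis on the color profile of an $r$-set $e$: if $\phi(e)=\ZZ_l$, the modular part yields a valid $(r-1)$-subset (this is where $l\le r/2$ enters, forcing many repeated colors in $e$ and so leaving room); if $e$ misses a color, the second rule does. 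And check (b) by conditioning on the color profile of $\sigma$: the ``generic'' profiles contribute the $\tfrac1{2l}$ term (a modular-residue/tie-break event of probability $\tfrac1{2l}$), while profiles missing a color contribute at most $\tfrac12(3+\tfrac{r-1}{l-1})(1-\tfrac1l)^{r-1}$ by a union bound over the missing color combined with a conditional estimate for the rule inside a fixed missing-color event (here $(1-1/l)^{r-1}$ is the probability that $\sigma$ avoids a fixed color and $3+\tfrac{r-1}{l-1}$ absorbs the number of ways the rule can then fire). Optimizing over $l$ in the allowed range gives the displayed inequality.

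\emph{Main obstacle.} The crux is the general-$r$ construction: one needs a rule that, for \emph{every} coloring, always leaves a rule-satisfying $(r-1)$-subset inside every $r$-set while keeping $\P[\sigma\in\mathcal C]$ down to the target value. The genuinely delicate part is the ``concentrated'' color profiles — a near-monochromatic $r$-set forces a near-monochromatic $(r-1)$-set into the cover, so no purely modular rule can work, and it is exactly this defect that the factor $(1-1/l)^{r-1}$ is paying for; designing a rule that is both valid and clean enough to analyse, and balancing the two contributions, is where the work lies. A lesser, more mechanical difficulty is extracting the precise extremal constructions for $r=4,5$ carefully enough to land the exact constants $\tfrac49,\tfrac5{16}$ rather than $\tfrac49+o(1)$, etc.
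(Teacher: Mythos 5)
Your reduction---construct a random family $\mathcal{C}\subseteq\binom{V}{r-1}$ that covers every $r$-subset of $V$ with probability $1$, bound $\P[\sigma\in\mathcal{C}]$ pointwise, intersect with $D(G)$, and apply the first moment method---is exactly the mechanism of the paper's proof, and your $r=3$ and $r=4$ cases are essentially the paper's (monochromatic pairs of a random bipartition; the complement of Tur\'an's tripartite construction, with probability $\tfrac19+\tfrac39=\tfrac49$). For $r=5$ you only cite the density-$\tfrac{11}{16}$ construction. To land the exact constant $\tfrac{5}{16}$ you should use the randomized form of Giraud's construction, as the paper does: a uniform bipartition $(V_0,V_1)$ together with an independent uniform $f:V_0\times V_1\to\{0,1\}$, taking all $4$-sets of type $40$ plus the ``even'' $4$-sets of type $22$; a fixed finite extremal instance under random relabelling only gives density $\tfrac5{16}+o(1)$, and the covering property still needs the (short) parity check that for an edge of type $32$ two of the three relevant values of $f$ share a parity.

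The genuine gap is the first displayed inequality (general $r$, $l\le r/2$), which is the main content of the theorem: you describe only the intended shape of a ``modular rule plus missing-color rule'' and yourself flag the construction as the main obstacle, but no rule is defined, the covering property is not verified, and the source of the factor $3+\tfrac{r-1}{l-1}$ is a placeholder. The paper's argument, which fits your framework once you fold an extra uniform $j\in\{0,\dots,l-1\}$ into the randomness (or use pigeonhole over $j$), is the Frankl--R\"odl family $\mathcal{C}_j=\{\sigma:\ (w(\sigma)+j)\bmod l\in\{0,\dots,d(\sigma)\}\}$, where $w(\sigma)=\sum_i i\,|\sigma\cap V_i|$ and $d(\sigma)$ counts the blocks missed by $\sigma$; every $r$-set is covered for every $j$, and $\E[d(\sigma)]=l\left(1-\tfrac1l\right)^{r-1}$ already gives $\tfrac1l+\left(1-\tfrac1l\right)^{r-1}$ per $\sigma$. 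Sidorenko's refinement then deletes, using an independent uniform $f:V_0\times\cdots\times V_{l-1}\to\{0,1\}$, those $\sigma$ meeting \emph{every} block at least twice whose parity statistic $q(\pi(\sigma))$ is odd; covering survives because when $|e\cap V_i|\ge 3$ the parities of the three candidate $(r-1)$-subsets sum to an even number. Note also that your dichotomy is not quite right: the factor-$\tfrac12$ saving applies only to $\sigma$ hitting every block at least twice, while $\sigma$ hitting every block but with some block hit exactly once get no saving and contribute $\tfrac1l$; bounding the probability of ``some block hit at most once'' by $l\left[\left(1-\tfrac1l\right)^{r-1}+\tfrac{r-1}{l}\left(1-\tfrac1l\right)^{r-2}\right]$ is precisely what produces $\tfrac12\left[\tfrac1l+\left(3+\tfrac{r-1}{l-1}\right)\left(1-\tfrac1l\right)^{r-1}\right]$. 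Without these ingredients the general-$r$ bound remains unproven.
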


Theorems~\ref{thm:matching-large-p} and \ref{thm:cover-absolute} together almost immediately imply Theorems~\ref{thm:mainsmall} and \ref{thm:mainlarge} for $d = \omega(1)$. In fact, for large~$r$, we obtain the significantly stronger $\tau^{(r-1)}(H_r(n,p))/\nu^{(r-1)}(H_r(n,p)) \leq (1 + o(1)) \ln r$ w.h.p. The details are given in Appendices~\ref{app:smallr} (Theorem ~\ref{thm:mainsmall} for $d = \omega(1)$), and \ref{app:largerlarged} (Theorem \ref{thm:mainlarge} for $d = \omega(1)$).

Finally, for $d = \Theta(1)$, extending ideas of Kahn-Park~\cite{kp20}, we obtain the following.
\begin{thm}
\label{thm:matching-medium-p}
For every $r \geq 3$, if $d = \Theta(1)$, then w.h.p.
\[ \nu^{(r-1)}(H_r(n,p)) > (1 + o(1)) \frac{1}{r} \alpha_r(d) \binom{n}{r-1}\]
where
\[
\alpha_r(d) = 1 - \left(\frac{1}{(r-1)d + 1}\right)^{1/(r-1)}.
\]
\end{thm}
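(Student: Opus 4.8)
The plan is to lower bound $\nu(H) = \nu(H^{(r-1)})$ by exhibiting a large matching in $H^{(r-1)}$, following the strategy of Kahn--Park~\cite{kp20}. Recall $H^{(r-1)}$ has vertex set $\binom{V}{r-1}$; each edge $e$ of $H$ contributes the $r$-element "cluster" $\binom{e}{r-1}$ of $(r-1)$-sets, and we must pick a family of such clusters, one per chosen edge, that are pairwise disjoint (no two chosen edges of $H$ share $r-1$ vertices). Since $d = \Theta(1)$, a typical $(r-1)$-set $\sigma$ lies in $\Theta(1)$ edges, so $H^{(r-1)}$ is (roughly) a sparse random-like $r$-uniform hypergraph on $N := \binom{n}{r-1}$ vertices, and the natural first move is the greedy / Rödl-nibble-free bare-hands argument: process the edges of $H$ in a random order and greedily add an edge to the matching whenever its cluster is still disjoint from everything chosen so far. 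The quantity $\alpha_r(d)$ is exactly the probability that a fixed $(r-1)$-set ends up \emph{covered} by this greedy process, computed via a branching/differential-equation heuristic, so the target bound $\nu(H) > (1+o(1)) \frac{1}{r}\alpha_r(d)\binom{n}{r-1}$ says: a $(1+o(1))\alpha_r(d)$-fraction of the $(r-1)$-sets get covered, and each chosen edge covers exactly $r$ of them.

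The key steps, in order. (1) Identify the right random process: order the $M \sim \mathrm{Bin}(\binom{n}{r},p)$ potential edges uniformly at random and run the greedy matching in $H^{(r-1)}$; equivalently, assign each edge $e$ an i.i.d.\ uniform label $x_e \in [0,1]$ and include $e$ iff it is present and all $r$ sets in $\binom{e}{r-1}$ are "free" relative to lower-labelled present edges. (2) Compute, for a fixed $(r-1)$-set $\sigma$, the probability $q$ that $\sigma$ is covered by the resulting matching. This is the heart of the calculation: condition on $\sigma$ being in some edge, track the recursive structure, and solve the resulting fixed-point equation; one should recover $q \to \alpha_r(d)$ as $n\to\infty$ (the formula $1 - ((r-1)d+1)^{-1/(r-1)}$ has the shape of the survival probability in an $(r-1)$-ary branching recursion, consistent with each edge through $\sigma$ forking into $r-1$ further "competing" coordinates). (3) By linearity of expectation, $\E[\,\#\text{covered }(r-1)\text{-sets}\,] = (1+o(1))\alpha_r(d)\binom{n}{r-1}$, hence $\E[\nu(H)] \geq \frac{1}{r}(1+o(1))\alpha_r(d)\binom{n}{r-1}$. (4) Concentration: show $\nu(H)$ is concentrated around its mean. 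Because changing the presence/label of a single edge changes $\nu$ by at most $O(1)$, one can apply a bounded-differences (Azuma--McDiarmid) inequality over the $\binom{n}{r}$-coordinate product space, or Talagrand, to get w.h.p.\ concentration around $\E[\nu(H)]$; since the mean is $\Theta(n^{r-1})$ and the fluctuations are $o(n^{r-1})$, this yields the w.h.p.\ statement.

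The main obstacle I expect is step (2) — making the heuristic branching/differential-equation computation of the coverage probability rigorous. The greedy process creates dependencies between the "free/covered" statuses of overlapping $(r-1)$-sets, and one must show that on the relevant local scale these look like an idealized branching process (i.e., a local-weak-limit / coupling argument, or a careful second-moment/martingale tracking of the fraction of free sets as edges are revealed in label order), with error terms that vanish as $n\to\infty$ while $d=\Theta(1)$ stays fixed. A secondary technical point is justifying that the Poissonization of edge counts and the passage from $\mathrm{Bin}(\binom{n}{r},p)$ edges to the clean recursion introduces only $o(1)$ error. Steps (1), (3), (4) are essentially routine given the framework of~\cite{kp20}; the novelty is in carrying the $r$-uniform analogue of their coverage computation and extracting the explicit constant $\alpha_r(d)$.
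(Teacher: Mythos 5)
Your outline coincides with the paper's proof in structure: run the random greedy $(r-1)$-matching $M^*$ driven by i.i.d.\ uniform edge weights, use symmetry to write $\E|M^*| = \frac{1}{r}\,P\bigl(\rho \in D(M^*)\bigr)\binom{n}{r-1}$ for a fixed $(r-1)$-set $\rho$, and finish with bounded differences (the paper invokes the McDiarmid setup of Section~\ref{sec:probabilistic}; your variant, applying it to $\nu$ itself, which is $1$-Lipschitz in the edge indicators and has mean $\Theta(n^{r-1})$, also works since $\nu \geq |M^*|$ pointwise). So your steps (1), (3), (4) are sound and identical in substance to the paper.

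The genuine gap is your step (2), which you yourself flag: you never derive the limit $P(\rho \in D(M^*)) \to \alpha_r(d)$; you only observe that the formula ``has the shape'' of a branching-process survival probability. Since the explicit constant is the entire content of the theorem, this is the part that must actually be supplied. The paper does it in three moves. First, coverage of $\rho$ by $M^*$ is a local event: it is determined by the subgraph $W(\rho)$ of $(r-1)$-sets reachable from $\rho$ along paths with strictly decreasing weights, and when $W(\rho)$ is a finite tree, $\rho \in D(M^*)$ iff $\rho$ ``dies'' under the recursive rule that an $(r-1)$-set dies iff it is the base of an edge all of whose other $(r-1)$-sets survive. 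Second, the breadth-first coupling of Section~\ref{sec:coupling} (Corollary~\ref{cor:coupling}) replaces $W(\rho)$ by its analogue $W^d$ in the Poisson tree $T^d$; the decreasing-weight restriction makes $W^d$ finite with probability $1$ (the expected number of $(r-1)$-sets at depth $i$ is $((r-1)d)^i/((r-1)\,i!)$), so the survival rule is well defined and the dependency issue you worry about disappears. Third, writing $f(x)$ for the survival probability of an $(r-1)$-set of weight $x$ and $F(x)=\int_0^x f^{r-1}(y)\,dy$, Poisson thinning gives $F'(x)=f^{r-1}(x)=e^{-(r-1)dF(x)}$, whence $f(x) = ((r-1)dx+1)^{-1/(r-1)}$ and, at the root weight $x=1$, the death probability is exactly $\alpha_r(d)$. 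Without this (or an equivalent rigorous differential-equation-method analysis of the label-ordered process), your step (2) remains a heuristic rather than a proof.
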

\begin{thm}
\label{thm:cover-medium-p-small-r}
Let $r \in \{3, 4, 5\}$. If $d = \Theta(1)$, then w.h.p. 
\[
\tau^{(r-1)}(H_r(n,p)) \leq (1 + o(1)) \beta_r(d) \binom{n}{r-1}
\]
where
\[ \beta_r(d) = \begin{cases} 
\frac{1}{2} \left[1 - \exp\left(-\frac{d}{2}\left(1 + e^{-d}\right)\right) \right]&\text{if }r = 3, \\
\frac{4}{9} \left[1 - \exp\left(-\frac{d}{3}\left(2 + e^{-2d}\right)\right)\right] &\text{if }r = 4,\\
\frac{5}{16} \left[1 - \exp\left(-\frac{d}{2}\left(1 + e^{-d}\right)\right) \right]  &\text{if }r = 5. \\
\end{cases} \]
\end{thm}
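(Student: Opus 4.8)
The plan is to prune the near-extremal cover supplied by the Tur\'an-type construction behind Theorem~\ref{thm:cover-absolute}. Recall that for $r\le 5$ that theorem produces, from an equitable partition $V=V_1\cup\cdots\cup V_l$, an $(r-1)$-cover $B=B_r$ of the complete $r$-graph $K_n^{(r)}$ of size $(1+o(1))c_r\binom{n}{r-1}$, with $c_3=\tfrac12,\ c_4=\tfrac49,\ c_5=\tfrac5{16}$, obtained by selecting those $(r-1)$-sets whose pattern (the multiset of parts met) lies in a prescribed family $\mathcal S=\mathcal S_r$. Fix such a partition once and for all; then $B$ is an $(r-1)$-cover of every $r$-graph on $V$, in particular of $H$. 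Call $\sigma\in B$ \emph{forced} if some $e\in H$ with $\sigma\subseteq e$ has $\sigma$ as its unique $(r-1)$-subset lying in $B$, and call $e\in H$ \emph{residual} if none of its $(r-1)$-subsets is forced. Set
\[ F\;=\;\{\sigma\in B:\sigma\text{ is forced}\}\ \cup\ \{\sigma\in B:\sigma\subseteq e\text{ for some residual }e\in H\}.\]
I would first check that $F$ is an $(r-1)$-cover of $H$: each $e\in H$ has an $(r-1)$-subset in $B$; if one of these is forced then $e$ is covered, and otherwise $e$ is residual, so \emph{all} of its $B$-subsets lie in $F$. Note also that the two sets in the union are disjoint (a forced $\sigma$ cannot lie in a residual edge), so $\mathbf 1[\sigma\in F]=\mathbf 1[\sigma\text{ forced}]+\mathbf 1[\sigma\text{ lies in a residual edge}]$ for $\sigma\in B$. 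Since $\tau(H)\le|F|$, the theorem reduces to showing $|F|\le(1+o(1))\beta_r(d)\binom{n}{r-1}$ w.h.p.

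\emph{Expectation.} For a generic $\sigma\in B$ of a given pattern, the edges $e\supseteq\sigma$ for which $\sigma$ is the unique $B$-subset are exactly those obtained by adjoining a vertex from a specific union of parts, of total size $\sim\lambda_\sigma n/l$ for an integer $\lambda_\sigma$ read off from $\mathcal S$; hence $\Pr[\sigma\text{ forced}]\to 1-e^{-\lambda_\sigma d/l}$. For the residual term I would expand over the vertex $z$ completing a residual edge through $\sigma$: having conditioned on $\sigma$ being unforced (an event depending on a block of edges disjoint from the rest of the analysis), the events ``$\sigma\cup\{z\}$ is residual'', indexed by the admissible $z$, become mutually independent, each of probability $\sim p\,e^{-\mu_\sigma d}$, the exponent $\mu_\sigma$ recording the \emph{other} $B$-subsets of $\sigma\cup\{z\}$ scaled by their (constant) probabilities of being unforced. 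Multiplying out gives $\Pr[\sigma\in F]$ in closed form for each pattern, and summing over patterns weighted by the number of $(r-1)$-sets of each pattern collapses---this is where the particular shape of $\mathcal S_r$ enters---to $\E|F|=(1+o(1))\beta_r(d)\binom{n}{r-1}$.

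\emph{Concentration.} Write $|F|=|F_{\mathrm f}|+|F_{\mathrm r}|$, with $F_{\mathrm f}$ the forced sets and $F_{\mathrm r}$ the non-forced sets in residual edges. Because the presence-indicator of any single edge influences the forced-status of at most one $\sigma$, the terms of $|F_{\mathrm f}|=\sum_{\sigma\in B}\mathbf 1[\sigma\text{ forced}]$ are \emph{independent}, so a Chernoff bound gives $|F_{\mathrm f}|=(1+o(1))\E|F_{\mathrm f}|$ w.h.p. For $|F_{\mathrm r}|$ I would use Chebyshev: $\mathbf 1[\sigma\in F_{\mathrm r}]$ is determined by $H$ restricted to the edges meeting $\sigma$ together with those sharing an $(r-1)$-set with one of these, so $\mathrm{Cov}(\mathbf 1[\sigma\in F_{\mathrm r}],\mathbf 1[\sigma'\in F_{\mathrm r}])$ vanishes for ``distant'' pairs, is $O(1)$ for the $O(n^{r})$ pairs with $|\sigma\cap\sigma'|\ge r-2$ or joined by a single edge, and---crucially---is only $O(p^2)=O(n^{-2})$ for the ``intermediate'' pairs joined solely through the outer layer of edges, where a shared edge is pivotal only on a further low-probability event. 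Summing the three contributions gives $\mathrm{Var}(|F_{\mathrm r}|)=o\big((\E|F_{\mathrm r}|)^2\big)$, hence $|F_{\mathrm r}|=(1+o(1))\E|F_{\mathrm r}|$ w.h.p., and so $\tau(H)\le|F|\le(1+o(1))\beta_r(d)\binom{n}{r-1}$ w.h.p.

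The main obstacle is exactly this intermediate-range covariance estimate: a naive count of correlated pairs times a worst-case $O(1)$ covariance bound only yields $\mathrm{Var}(|F_{\mathrm r}|)=O\big((\E|F_{\mathrm r}|)^2\big)$, so one genuinely has to quantify the decay of the covariance with the combinatorial distance between $\sigma$ and $\sigma'$ (equivalently, that a far-away edge is rarely pivotal for the residual-membership of a given $(r-1)$-set); an exposure martingale with worst-case Lipschitz constants is likewise too weak, so the same difficulty resurfaces if one tries to bypass the second moment. A secondary, more mechanical, difficulty is carrying out the pattern-by-pattern bookkeeping in the expectation step carefully enough that it reproduces the precise closed forms $\beta_4$ and $\beta_5$.
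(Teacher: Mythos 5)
Your reduction has several sound pieces: $F$ is indeed an $(r-1)$-cover, the forced indicators really are independent (distinct $\sigma$'s have disjoint families of potential forcing edges), and for $r=3$ your forced/residual pruning coincides exactly with the paper's cover (there ``forced'' = has a crossing edge, ``residual'' = a monochromatic edge with all three pairs unforced), so $r=3$ would go through. Concentration, which you flag as the main obstacle, is not the real difficulty (the paper dispatches it with McDiarmid, since $\E|F|=\Theta(n^{r-1})=\omega(n^{r/2})$ and dependencies are local). The genuine gap is the expectation step, which you assert ``collapses'' to $\beta_r(d)$ but does not. Take $r=4$, so $B$ is Tur\'an's $3$-block cover (type-$300$ triples together with the cyclic ordered types $210,021,102$). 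A triple of either class is forced only by extensions into one specific part (e.g.\ a $210$-triple only by a vertex of $V_2$, creating a $211$-edge; a $300$-triple in $V_0$ only by a vertex of $V_2$, creating a $301$-edge), so $\Pr[\sigma\text{ forced}]\to 1-e^{-d/3}$, and carrying out your residual/add-back computation in the Poisson limit gives $\Pr[\sigma\in F]\to 1-\exp\bigl(-\tfrac d3(1+2e^{-d})\bigr)$ for $300$-triples and $1-\exp\bigl(-\tfrac d3(1+e^{-d}+e^{-d/3})\bigr)$ for cyclic triples, hence $\E|F|\sim\binom n3\bigl[\tfrac19\bigl(1-e^{-\frac d3(1+2e^{-d})}\bigr)+\tfrac13\bigl(1-e^{-\frac d3(1+e^{-d}+e^{-d/3})}\bigr)\bigr]$. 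For small constant $d$ this \emph{exceeds} $\beta_4(d)$: at $d=1/3$ (inside the range needed for the application) it is about $0.1103\binom n3$ versus $\beta_4(1/3)\binom n3\approx 0.1083\binom n3$. Structurally, adding back \emph{all} $B$-subsets of every residual edge is too wasteful, and the unique-subset notion of ``forced'' misallocates the pruning between the two pattern classes; the paper instead prunes asymmetrically --- a cyclic triple is discarded iff all its edges have type $310$ (no add-back needed in that class, because a $310$-edge is always covered by its retained $300$-subset), while only the $300$-class uses the ``all extensions stay in my block'' rule with an add-back restricted to monochromatic edges --- and it is exactly this asymmetry that produces $\beta_4$.

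A second, independent problem is $r=5$: the $\tfrac5{16}$-cover is not determined by partition patterns at all (no subcollection of the pattern probabilities $\tfrac18,\tfrac12,\tfrac38$ sums to $\tfrac5{16}$). It is Giraud's construction, keeping type-$40$ sets and only the \emph{even} type-$22$ sets with respect to a random function $f:V_0\times V_1\to\{0,1\}$, and the pruning there requires the parity ``compatibility'' analysis of the paper's Section on $5$-graphs, which a pattern-only forced/residual rule cannot express. So while the general frame (prune a Tur\'an-type cover, compute local probabilities via the Poisson/branching coupling, then concentrate) matches the paper, the specific pruning rule you propose provably gives a constant worse than $\beta_4(d)$ for part of the range $d=\Theta(1)$ and is not even defined correctly for $r=5$; the pattern-class-specific (and, for $r=5$, parity-aware) removal and add-back sets are the missing idea.
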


Theorem~\ref{thm:mainsmall} now follows from Theorems~\ref{thm:matching-medium-p} and \ref{thm:cover-medium-p-small-r} combined with the fact that $\beta_r(d)/(\alpha_r(d)/r) \leq \ceil{(r+1)/2}$ (see Appendix~\ref{app:smallr}). 
For larger $r$, we use Theorem~\ref{thm:matching-medium-p} and the simpler bound from Theorem~\ref{thm:cover-absolute}, which is sufficient to obtain Theorem~\ref{thm:mainlarge} (see Appendix~\ref{app:largermediumd}). This comes at the cost of a worse constant. However, we show that the methods used to obtain Theorem~\ref{thm:cover-medium-p-small-r} can be extended to larger $r$.

For a uniform random partition of an $(r-1)$-set into $l$ blocks (i.e., each element is placed into one of the $l$ blocks, independent of other elements), let $\zeta_1 := \zeta_1(r, l)$ be the probability that there are least two elements in each block, and let $\zeta_2 = \zeta_2(r, l) = 1 - \zeta_1(r, l)$. We obtain the following.
\begin{thm}
\label{thm:cover-medium-p-large-r}
Let $ r \geq 3$ and $l \leq r/2$. If $d = \Theta(1)$, then w.h.p.
$$\tau^{(r-1)}(H_r(n,p)) < (1 + o(1)) \binom{n}{r-1} \psi_{r, l}(d),$$
where
$$\psi_{r, l}(d) = \frac{\zeta_1}{2l} \left(1 - \exp\left(-\frac{d}{2} \left(1 + e^{-d}\right)\right) \right) + \left(\frac{\zeta_2}{l} + \left(1 - \frac{1}{l}\right)^{r-1}  \right) \left(1 - e^{-d}\right).$$
\end{thm}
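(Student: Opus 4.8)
The plan is to construct, w.h.p., an explicit $(r-1)$-cover $C$ of $H$ with $|C|\le(1+o(1))\psi_{r,l}(d)\binom{n}{r-1}$, via a partition construction interpolating between the Tur\'an-type construction behind Theorem~\ref{thm:cover-absolute} and the refined $r=3$ construction behind Theorem~\ref{thm:cover-medium-p-small-r}. Fix a partition $V(H)=V_1\sqcup\dots\sqcup V_l$ with $|V_j|\sim n/l$, and inside each block a halving $V_j=V_j^{(1)}\sqcup V_j^{(2)}$ with $|V_j^{(i)}|\sim n/(2l)$; it is cleanest to take this partition uniformly at random and independent of $H$, so that the block and half sizes concentrate, the co-degree of any fixed $(r-1)$-set remains $\mathrm{Bin}(n-r+1,p)$ with mean $\sim d$, and a fixed $(r-1)$-set lands with at least two vertices in every block with probability exactly $\zeta_1=\zeta_1(r,l)$. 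The cover $C$ will be specified by a membership rule for each $(r-1)$-set depending on its block/half profile together with local co-degree information, and then one shows $C$ is an $(r-1)$-cover and that $\E|C|\le(1+o(1))\psi_{r,l}(d)\binom{n}{r-1}$ with concentration.

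\textbf{The three contributions.} Since $l\le r/2$, every edge $e$ meets some block in $\ge 2$ vertices, and the three terms of $\psi_{r,l}$ suggest the following split of the work. (a) For the $(1-1/l)^{r-1}(1-e^{-d})$ term: include every positive-co-degree $(r-1)$-set contained in $V\setminus V_l$ for a fixed block $V_l$ --- there are $\sim(1-1/l)^{r-1}(1-e^{-d})\binom{n}{r-1}$ of them, and they cover every edge with at most one vertex in $V_l$; the remaining edges (with $\ge 2$ vertices in $V_l$) are passed on, and the construction is iterated/adjusted on the other blocks so as not to inflate this count. (b) For the $\tfrac{\zeta_2}{l}(1-e^{-d})$ term: the edges that are not block-balanced (but already have $\ge 2$ in $V_l$), together with those whose only block-balanced $(r-1)$-subsets are ``tight'', are covered by including the positive-co-degree $(r-1)$-sets needed to hit them, the count being controlled by $\zeta_2$ (the $(r-1)$-set weight of the non-block-balanced profiles) divided by $l$. (c) For the $\tfrac{\zeta_1}{2l}\bigl(1-\exp(-\tfrac{d}{2}(1+e^{-d}))\bigr)$ term: a genuinely block-balanced edge has, outside a set of profiles of weight $\zeta_2$, two vertices in a common half $V_j^{(i)}$; anchoring a designated $(r-1)$-subset on that half and applying the argument behind Theorem~\ref{thm:cover-medium-p-small-r} inside it --- keeping only ``forced'' designated sets plus one $(r-1)$-set per leftover edge --- costs, exactly as in the derivation of $\beta_3(d)$, $\sim\tfrac{\zeta_1}{2l}\bigl(1-\exp(-\tfrac{d}{2}(1+e^{-d}))\bigr)\binom{n}{r-1}$. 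Summing (a)--(c) gives $\psi_{r,l}(d)\binom{n}{r-1}$.

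\textbf{Verification and main obstacle.} Two things need to be done: check that (a)--(c) together hit every edge of $H$ (a finite case analysis over block/half profiles, which is what really pins down the correct rule), and bound $|C|$; for the latter the factors $1-e^{-d}$ and $\exp(-\tfrac{d}{2}(1+e^{-d}))$ are the probabilities that a fixed $(r-1)$-set has positive co-degree, resp.\ is ``forced'', and one needs concentration of the number of sets of each type (a second-moment estimate, or a bounded-differences/Azuma argument on the edge indicators, as in the proof of Theorem~\ref{thm:cover-medium-p-small-r}), plus the estimate that the leftover $3$-graphs from (c) have $o(n^{r-1})$ edges and so only contribute to the $1+o(1)$. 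I expect the crux to be designing the membership rule in (a) and (b) so that each edge is charged exactly once and the counts come out to be precisely $\tfrac{\zeta_1}{2l},\ \tfrac{\zeta_2}{l},\ (1-1/l)^{r-1}$ rather than the weaker constants of Theorem~\ref{thm:cover-absolute}, and in checking that conditioning on a balanced profile in part (c) leaves the co-degree and ``forced'' events distributed, up to $1+o(1)$, as in the honest random $3$-graph of Theorem~\ref{thm:cover-medium-p-small-r}.
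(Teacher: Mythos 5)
Your plan correctly identifies the three probabilistic ingredients behind the three terms of $\psi_{r,l}(d)$, but it is missing the two combinatorial mechanisms that actually make the paper's bound work, and you explicitly defer exactly those points (``I expect the crux to be designing the membership rule \dots so that the counts come out to be precisely $\tfrac{\zeta_1}{2l}$, $\tfrac{\zeta_2}{l}$, $(1-1/l)^{r-1}$''). First, the factors $1/l$ in the terms $\tfrac{\zeta_1}{2l}$ and $\tfrac{\zeta_2}{l}$ do not come from any single fixed cover: in the paper they come from the Frankl--R\"odl device of building $l$ covers at once, $\mathcal{D}_j=\{\sigma: (w(\sigma)+j)\bmod l\in\{0,\dots,d(\sigma)\}\}$ for $0\le j\le l-1$ (restricted to $\mathcal{E}'\cup C$), in which each relevant $(r-1)$-set is used in only $d(\sigma)+1$ of the $l$ covers; the modular condition is what guarantees every edge is hit by each $\mathcal{D}_j$, and pigeonhole over $j$ then yields the $1/l$ saving. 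Your step (b), ``include the positive-co-degree $(r-1)$-sets needed to hit them, the count being controlled by $\zeta_2$ divided by $l$,'' has no mechanism producing that $1/l$: naively the unbalanced sets with positive co-degree number $\sim\zeta_2(1-e^{-d})\binom{n}{r-1}$, a factor $l$ too many, and selecting a $1/l$ fraction while retaining coverage is precisely the nontrivial content. The same issue afflicts your term (c). Iterating your step (a) over blocks, as you suggest, would inflate the $(1-1/l)^{r-1}$ term rather than preserve it.

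Second, the factor $\tfrac12$ on the balanced sets is obtained in the paper by Sidorenko's parity function $f:V_0\times\cdots\times V_{l-1}\to\{0,1\}$: a balanced $\sigma$ is kept only if $q(\pi(\sigma))$ is even, and coverage survives because for an edge with three vertices $x,y,z$ in one block the sum $q(\pi(e\setminus\{x\}))+q(\pi(e\setminus\{y\}))+q(\pi(e\setminus\{z\}))$ is even, so one of the three removals is even. Your substitute --- halving each block and anchoring on a half containing two vertices of the edge --- comes with no such covering identity, and in particular does not address edges with three or more vertices in a block, which is exactly the case the parity trick exists to handle; you acknowledge the needed case analysis but do not carry it out. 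By contrast, the final refinement you sketch in (c) (remove ``all edges compatible'' sets, add back the forced ones, compute $e^{-d/2}$ and $\exp(-\tfrac d2(1+e^{-d}))$ via the branching-tree coupling) does match the paper's $C_0\setminus C_1\cup C_2$ step and would be routine once the $\mathcal{D}_j$/parity framework is in place. As it stands, however, the proposal does not contain a defined cover, a proof that it covers, or a count achieving the stated constants, so it falls short of a proof.
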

In Appendix~\ref{app:r6}, we use Theorem~\ref{thm:cover-medium-p-large-r} to obtain a better bound for $r = 6$. Finally, in Appendix~\ref{app:computations}, we give computational evidence that Theorem~\ref{thm:cover-medium-p-large-r} implies better bounds for all $r \geq 7$.

\paragraph{Conventions} 
For the rest of this paper, unless otherwise noted, we will treat $r$ as a fixed constant. We write $\nu(G)$ and $\tau(G)$ for $\nu^{(r-1)}(G)$ and $\tau^{(r-1)}(G)$; when the $r$-graph $G$ is clear from context we simply write $\tau$ and $\nu$. We fix $H := H_r(n, p)$ to be the random $r$-graph and use $G$ for a general $r$-graph. Let $d = (n - (r-1))p$ be the expected co-degree of an $(r-1)$-set of vertices of $H$.

\paragraph{Outline}
In Section~\ref{sec:probabilistic}, we collect some standard probabilistic tools that we use throughout. Section~\ref{sec:coupling} introduces \emph{breadth-first trees} and gives a coupling which is central to the proofs of Theorems~\ref{thm:small-p}, and \ref{thm:matching-medium-p}--\ref{thm:cover-medium-p-large-r}. Sections~\ref{sec:small-p}, \ref{sec:matching-large-p}, and \ref{sec:matching-medium-p} give the proofs of Theorems~\ref{thm:small-p}, \ref{thm:matching-large-p}, and \ref{thm:matching-medium-p}, respectively. Section~\ref{sec:covers} is devoted to the proofs of Theorems~\ref{thm:cover-absolute}, \ref{thm:cover-medium-p-small-r}, and \ref{thm:cover-medium-p-large-r}. As noted above, that Theorems~\ref{thm:small-p}--\ref{thm:cover-medium-p-large-r} imply Theorem~\ref{thm:mainsmall} and Theorem~\ref{thm:mainlarge} is shown in Appendices~\ref{app:smallr}--\ref{app:r6}. In Appendix~\ref{app:computations}, based on computational evidence, we present some better bounds that could be obtained with a more involved analysis.

\paragraph{Acknowledgments}
We would like to thank Mohsen Aliabadi, Daniel McGinnis, and Shira Zerbib for extensive discussions on an earlier version of this paper.

\section{Probabilistic Tools}
\label{sec:probabilistic}

We need the following standard concentration facts. The first is the Chernoff-Hoeffding inequality (see e.g~{\cite[Theorem 21.6]{friezekaronski}}).
\begin{thm}
\label{thm:chernoff}
If $X$ is binomial with $\E X = \mu$, then for all $t \geq 0$
\[ \P(X \geq \mu + t) \leq \exp[-t^2/(2(\mu + t/3))], \]
and for $t \leq \mu$,
\[ \P(X \leq \mu - t) \leq  \exp[-t^2/(2(\mu - t/3))]. \]
\end{thm}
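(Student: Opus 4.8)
This is the classical Chernoff--Hoeffding bound, so it may simply be quoted from the cited source; for completeness I would run the standard exponential-moment argument. Write $X=\sum_{i=1}^n X_i$ as a sum of independent $\{0,1\}$-valued random variables with $\sum_i \E X_i = \mu$ (for $X\sim\Bin(n,p)$ each $X_i$ has mean $p$, but only independence is used, so the proof in fact covers any Poisson-binomial $X$). The case $\mu=0$ is trivial, so assume $\mu>0$. For the upper tail fix $\lambda>0$; Markov's inequality applied to $e^{\lambda X}$, together with independence and $1+z\le e^{z}$, gives
\[ \P(X\ge \mu+t)\le e^{-\lambda(\mu+t)}\,\E e^{\lambda X}=e^{-\lambda(\mu+t)}\prod_{i=1}^{n}\left(1+(\E X_i)(e^{\lambda}-1)\right)\le \exp\left(\mu(e^{\lambda}-1)-\lambda(\mu+t)\right). \]

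Next I would minimize the exponent over $\lambda>0$; differentiating shows the optimum is at $e^{\lambda}=1+t/\mu$, which yields
\[ \P(X\ge \mu+t)\le \exp\left(-\mu\,\varphi\!\left(\tfrac{t}{\mu}\right)\right),\qquad\text{where}\qquad \varphi(x):=(1+x)\ln(1+x)-x. \]
To recover the stated form it then suffices to verify the one-variable inequality $\varphi(x)\ge x^2/\left(2(1+x/3)\right)$ for all $x\ge 0$: both sides vanish at $x=0$, and a routine calculus argument shows the difference stays nonnegative (for $0\le x\le 1$ one can compare the alternating series $\varphi(x)=\sum_{k\ge 2}(-1)^{k}x^{k}/(k(k-1))$ with the expansion of the rational bound, and for $x\ge 1$ a derivative estimate using $\varphi'(x)=\ln(1+x)$ suffices). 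Putting $x=t/\mu$ turns $\mu\,\varphi(t/\mu)$ into exactly $t^2/\left(2(\mu+t/3)\right)$.

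For the lower tail I would run the same computation with $\lambda=-s$, $s>0$. Optimizing gives $e^{-s}=1-t/\mu$, which is legitimate precisely because $t\le\mu$ (the boundary $t=\mu$ handled by continuity, where it recovers $\P(X=0)\le e^{-\mu}$), and one obtains
\[ \P(X\le \mu-t)\le \exp\left(-\mu\,\varphi\!\left(-\tfrac{t}{\mu}\right)\right)=\exp\left(-\mu\left(\left(1-\tfrac{t}{\mu}\right)\ln\!\left(1-\tfrac{t}{\mu}\right)+\tfrac{t}{\mu}\right)\right). \]
Here the elementary bound needed is $\varphi(-y)\ge y^2/(2-y/3)$ for $0\le y\le 1$ (note $\varphi(-y)=\sum_{k\ge 2}y^{k}/(k(k-1))\ge 0$), proved in the same way; substituting $y=t/\mu$ produces $\exp\!\left(-t^2/(2\mu-t/3)\right)$.

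The only non-mechanical part of this plan is the pair of scalar inequalities relating $\varphi$ to the rational functions $x^2/(2(1+x/3))$ and $y^2/(2-y/3)$; everything else is the routine moment-generating-function calculation. I expect this monotonicity/Taylor bookkeeping — arranged so that the precise shifts $+t/3$ and $-t/3$ come out in the two denominators — to be the main, and only mildly tedious, obstacle.
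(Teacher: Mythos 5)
The paper does not prove this statement at all---it is quoted as a standard tool with a citation to Frieze--Karo\'nski, Theorem~22.6---so your first suggestion (simply citing it) is exactly what the paper does. Your back-up derivation is also correct: the moment-generating-function computation giving $\exp(-\mu\varphi(\pm t/\mu))$ with $\varphi(x)=(1+x)\ln(1+x)-x$ is the standard route, and both scalar inequalities indeed hold, namely $\varphi(x)\ge x^2/(2(1+x/3))$ for $x\ge 0$ and $\varphi(-y)\ge y^2/(2-y/3)$ for $0\le y\le 1$ (the latter by comparing $\varphi(-y)=\sum_{k\ge 2}y^k/(k(k-1))$ term by term with $y^2/(2-y/3)=\sum_{k\ge 2}y^k/(2\cdot 6^{k-2})$, using $k(k-1)\le 2\cdot 6^{k-2}$), which yields precisely the stated denominators $2(\mu+t/3)$ and $2\mu-t/3$.
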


The second fact is McDiarmid’s inequality, or the Hoeffding-Azuma inequality (see e.g.~{\cite[Lemma 1.2]{mcdiarmid1989}}).
\begin{thm}
\label{thm:azuma}
Let $X_1, \dots, X_l$ be independent random variables, with $X_k \in A_k$ for each $k$. Suppose the (measurable) function $f:\prod A_k \rightarrow \RR$ satisfies, for each $k$,
\begin{equation}
\label{eq:azuma1}|f(X) - f(X')| \leq c_k \end{equation}
whenever $X = (X_i: i \in [l])$ and $X' = (X'_i: i \in [l])$ differ only in their $k^{th}$ coordinates.

Then, for any $t > 0$, \[ \P(|f - \E f| \geq t) \leq 2\exp[-2t^2/\sum c^2_k]. \]
\end{thm}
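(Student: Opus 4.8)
The plan is to prove this via the classical Doob martingale construction combined with the Azuma--Hoeffding martingale inequality, whose engine is Hoeffding's lemma. First I would set up the Doob martingale: with $\mathcal{F}_k = \sigma(X_1, \dots, X_k)$ and $\mathcal{F}_0$ trivial, define $Z_k = \E[f(X) \mid \mathcal{F}_k]$ for $0 \le k \le l$, so that $Z_0 = \E f$ and $Z_l = f(X)$ almost surely, and $(Z_k)_{k=0}^l$ is a martingale with respect to $(\mathcal{F}_k)$. Writing $D_k = Z_k - Z_{k-1}$ for the martingale increments, the goal of the next step is to show that, conditionally on $\mathcal{F}_{k-1}$, each $D_k$ is confined to an interval of length at most $c_k$ whose endpoints are $\mathcal{F}_{k-1}$-measurable.

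For this I would use independence of the $X_k$: there is a measurable function $g_k$ with $Z_k = g_k(X_1, \dots, X_k)$, namely $g_k(x_1, \dots, x_k) = \E[f(x_1, \dots, x_k, X_{k+1}, \dots, X_l)]$, the remaining coordinates being integrated out. Set
\[
U_k = \sup_{x \in A_k} g_k(X_1, \dots, X_{k-1}, x), \qquad L_k = \inf_{x \in A_k} g_k(X_1, \dots, X_{k-1}, x),
\]
both $\mathcal{F}_{k-1}$-measurable. Then $L_k - Z_{k-1} \le D_k \le U_k - Z_{k-1}$, and for any $x, y \in A_k$,
\[
\bigl| g_k(X_1, \dots, X_{k-1}, x) - g_k(X_1, \dots, X_{k-1}, y) \bigr| \le \E\, \bigl| f(\dots, x, \dots) - f(\dots, y, \dots) \bigr| \le c_k,
\]
since the two arguments of $f$ differ only in their $k$th coordinate; hence the interval $[L_k - Z_{k-1},\, U_k - Z_{k-1}]$ has length $U_k - L_k \le c_k$.

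Next I would state and prove Hoeffding's lemma: if $Y$ has $\E Y = 0$ and $Y \in [a, b]$ almost surely, then $\E\, e^{sY} \le \exp\!\big(s^2 (b-a)^2 / 8\big)$ for all real $s$. I would prove this by checking that $\psi(s) := \log \E\, e^{sY}$ satisfies $\psi(0) = \psi'(0) = 0$ and $\psi''(s) = \operatorname{Var}_{\mu_s}(Y) \le (b-a)^2/4$, where $\mu_s$ is the exponentially tilted law (still supported in $[a,b]$), so Taylor's theorem gives $\psi(s) \le s^2(b-a)^2/8$. Applying this conditionally on $\mathcal{F}_{k-1}$ to $Y = D_k$, which has conditional mean $0$ and lies in an $\mathcal{F}_{k-1}$-measurable interval of length $\le c_k$, yields $\E[e^{sD_k} \mid \mathcal{F}_{k-1}] \le \exp(s^2 c_k^2 / 8)$. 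I would then chain these via the tower property: conditioning on $\mathcal{F}_{l-1}$,
\[
\E\, e^{s(f - \E f)} = \E\Big[ e^{s \sum_{k < l} D_k}\, \E[e^{s D_l} \mid \mathcal{F}_{l-1}] \Big] \le e^{s^2 c_l^2/8}\, \E\, e^{s \sum_{k < l} D_k},
\]
and by induction $\E\, e^{s(f - \E f)} \le \exp\!\big( s^2 \sum_k c_k^2 / 8 \big)$. Markov's inequality gives $\P(f - \E f \ge t) \le \exp\!\big(-st + s^2 \sum_k c_k^2 / 8\big)$ for every $s > 0$; optimizing at $s = 4t / \sum_k c_k^2$ yields $\P(f - \E f \ge t) \le \exp(-2t^2 / \sum_k c_k^2)$. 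Applying the same argument to $-f$ (its differences are bounded by the same $c_k$) gives the matching lower-tail bound, and a union bound produces the stated two-sided inequality. The main obstacle is the measurability/conditioning bookkeeping in the second step — showing cleanly that the martingale increment is trapped in an $\mathcal{F}_{k-1}$-measurable interval of width $c_k$, which is precisely where independence of the $X_k$ is used — together with the short convexity argument behind Hoeffding's lemma; once those are in place the remaining chaining and optimization are routine.
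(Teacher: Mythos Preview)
Your argument is correct and is the standard proof of McDiarmid's bounded-differences inequality via the Doob martingale and Hoeffding's lemma. Note, however, that the paper does not prove this statement at all: it is quoted as a standard concentration fact with a reference to \cite{mcdiarmid1989}, so there is no ``paper's own proof'' to compare against. Your write-up matches the classical derivation found in that reference.
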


We use Theorem~\ref{thm:azuma} with $l = \binom{n}{r}$ and $X_i = \mathbf{1}_{e_i \in H}$, where $\{e_i: i \in [l]\} = \binom{V(H)}{r}$ (i.e., $X$ is the random hypergraph $H$).
Then we have \begin{equation}
    \mbox{if $f$ is Lipschitz (i.e., satisfies \eqref{eq:azuma1} with $c_k = 1$)  and $\E f = \omega( n^{r/2})$, then $f \sim \E f$ w.h.p. }
\end{equation}

We denote by $\delta(X, Y)$ the {\em total variation distance} between discrete random variables $X$ and $Y$. That is, $\delta(X, Y)$ is half the $\ell_1$ distance between the distributions of $X$ and $Y$, and is the minimum of $P(X \neq Y)$ under couplings of $X$ and $Y$. We require the following fact, which appears in \cite{kp20}.
\begin{prop}
\label{prop:couplingprob}
For $n > 0$, and $c \geq -n$ integers, $p \in [0,1]$, $X \sim \Bin(n + c, p)$ and $Y\sim \Poisson(np)$,
\[ \delta(X, Y) \leq |c|p + O(p).\]
\end{prop}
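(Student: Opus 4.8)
The plan is to use the triangle inequality for total variation distance, passing through an intermediate Poisson law. Write
\[ \delta(X, Y) \le \delta\bigl(\Bin(n+c, p), \Poisson((n+c)p)\bigr) + \delta\bigl(\Poisson((n+c)p), \Poisson(np)\bigr), \]
and bound each term separately, recalling that $n + c \ge 0$ so all parameters are legitimate.

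For the second term I would use the standard fact that the total variation distance between two Poisson laws is at most the difference of their means: if $\lambda \le \mu$, then $\Poisson(\mu)$ couples with $\Poisson(\lambda)$ as $\Poisson(\lambda) + Z$ with $Z \sim \Poisson(\mu - \lambda)$ independent, so $\delta(\Poisson(\lambda), \Poisson(\mu)) \le \P(Z \ne 0) = 1 - e^{-(\mu-\lambda)} \le \mu - \lambda$. Applied with means $(n+c)p$ and $np$, this contributes exactly $|c|p$. For the first term I would invoke Le Cam's Poisson approximation bound in its sharpened form: for a sum $W$ of $m$ independent $\mathrm{Bernoulli}(p)$ variables with mean $\lambda = mp$, one has $\delta(W, \Poisson(\lambda)) \le \min(1, \lambda^{-1}) \cdot m p^2 \le p$; taking $m = n+c$ bounds the first term by $p = O(p)$. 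Adding the two estimates gives $\delta(X, Y) \le |c|p + p$, which is of the claimed form.

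The only point requiring a little care is getting an $O(p)$ (rather than something that degrades with $n$) out of the binomial-to-Poisson step, which is why the refined Le Cam constant $\min(1,\lambda^{-1})$ is needed in place of the cruder $\sum p_i^2 = mp^2$. An alternative organization, which makes each term transparent, is to route the first step through $\Bin(n,p)$ instead: couple $\Bin(n+c,p)$ and $\Bin(n,p)$ by sharing the first $\min(n, n+c)$ Bernoulli trials, so that $\delta\bigl(\Bin(n+c,p), \Bin(n,p)\bigr) \le |c|p$, and then apply Le Cam to $\delta\bigl(\Bin(n,p), \Poisson(np)\bigr) \le p$. Either way, both ingredients are textbook, so there is no genuine obstacle; the argument is a two-line application of the triangle inequality together with two known lemmas.
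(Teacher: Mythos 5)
Your argument is correct. Note that the paper does not actually prove this proposition; it is quoted from Kahn--Park \cite{kp20}, where the argument is the same two-step triangle inequality you use: compare the binomial to a Poisson with matching mean, then compare the two Poissons via the additive coupling, the latter contributing the $|c|p$ term. Both of your ingredients are sound: the Poisson--Poisson bound $1-e^{-|c|p}\le |c|p$ is exactly right, and the refined Le Cam constant $\min(1,\lambda^{-1})$ does give $\delta(\Bin(m,p),\Poisson(mp))\le \min(1,(mp)^{-1})\,mp^2\le p$ in both regimes $mp\le 1$ and $mp>1$ (and the degenerate case $m=n+c=0$ is trivial). One small remark: in the paper's application $p=\Theta(1/n)$, so even the crude Le Cam bound $mp^2=O(np^2)=O(p)$ would suffice; your use of the sharpened constant makes the estimate $|c|p+p$ uniform in $n$ and $p$, which is a mild strengthening of what is needed. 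Your alternative routing through $\Bin(n,p)$ (sharing the common trials, cost $|c|p$, then Le Cam) is equally valid.
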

Here $\Bin(\cdot,\cdot)$ and $\Poisson(\cdot)$ are the Binomial and Poisson distributions, respectively.

\section{Breadth-first trees}
\label{sec:coupling}

Throughout this section, we assume $d = (n-(r-1))p = \Theta(1)$, i.e., $p = \Theta(1)/n$.

An $r$-graph $T$ is a {\it tree} rooted at an $(r-1)$-set $\rho$ if it can be constructed iteratively as follows: at the first step, add a new vertex $v$, and the edge $\{v\} \cup \rho$. At each subsequent step, add a new vertex $v'$, and an edge of the form $\{v'\} \cup \sigma$, where $\sigma$ is an $(r-1)$-set contained in a previously added edge (i.e., $\sigma \in D(T)$). 
A tree is a {\it path} if at each step in the construction (other than the first) the set $\sigma$  contains the vertex added at the previous step (so $\sigma$ is a subset of the edge added at the previous step). The {\it length} of the path is the number of steps in its iterative construction.

By an {\em element} of $T$ we mean an element of $T \cup V(T) \cup D(T)$, i.e., an edge of $T$, a vertex of $T$, or an $(r-1)$-set with positive co-degree. Note that if $T$ is a tree, then there is a unique path between any two elements of $T$.

The \emph{base} of an element $a$ of $T$ is the last $(r-1)$-set preceding $a$ on the path to the root. The \emph{parent} of an element $a$ is the last edge preceding $a$ on the path to the root. The \emph{children} of an edge $e$ are the $(r-1)$-sets of $e$ other than its base, the edges that have a child $(r-1)$-set of $e$ as a base, and vertices of the form $f \setminus \sigma$ where $f$ is a child edge of $e$ and $\sigma$ is the base of $e$. 

The {\em depth} of an element $a$ of $T$ is the length of the path from $a$ to $\rho$, and the depth of the tree is the largest depth of an element of $T$.

\begin{prop}
\label{prop:finitetree}
If $T$ is a finite tree, then $\tau(T) = \nu(T)$.
\end{prop}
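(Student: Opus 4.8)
The plan is to induct on the number of edges of $T$. The base case (at most one edge, so $\tau(T)=\nu(T)\in\{0,1\}$) is immediate, so assume $T$ has at least two edges. First I would take a vertex $v$ of maximum depth. It lies in a single edge $e$, with base $\sigma:=e\setminus\{v\}$, and every edge whose base is $\sigma$ (the ``children of $\sigma$'') is a leaf edge — its new vertex also has maximum depth, hence has no children — so that new vertex lies in no other edge. Deleting these children of $\sigma$ one by one therefore leaves a tree $T_1$ with strictly fewer edges, in which $\sigma$ is contained in at most one edge, its parent edge $e'$ (if $\sigma=\rho$ then $T_1$ is empty, which only makes things easier below). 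Two bounds are clear: $\nu(T)\ge\nu(T_1)$, since a matching of $T_1$ is one of $T$; and $\tau(T)\le\tau(T_1)+1$, since adjoining $\sigma$ to a minimum cover of $T_1$ yields a cover of $T$ (the set $\sigma$ covers all the deleted edges $\sigma\cup\{v_i\}$). By the induction hypothesis $\tau(T_1)=\nu(T_1)$.

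If $\nu(T)\ge\nu(T_1)+1$ we are done: $\tau(T)\le\tau(T_1)+1=\nu(T_1)+1\le\nu(T)\le\tau(T)$. Otherwise $\nu(T)=\nu(T_1)$. Then every maximum matching $M_1$ of $T_1$ must use $e'$: if not, $M_1$ uses no edge through $\sigma$, and since the new vertex $v$ of $e$ lies in no edge of $M_1$ we have $|e\cap f|=|\sigma\cap f|<r-1$ for all $f\in M_1$, so $M_1\cup\{e\}$ is a matching of $T$ of size $\nu(T_1)+1$, a contradiction. The key sub-step is then that $e'$ is a leaf edge of $T_1$. Indeed, if some edge $f$ of $T_1$ had as base a proper $(r-1)$-subset $\mu\ne\sigma$ of $e'$, then $f$ would be a leaf edge, $e'\cap f=\mu$ has size $r-1$ so $f\notin M_1$, and exchanging $e'$ for $f$ in $M_1$ (valid since $f$'s new vertex lies in no edge of $M_1$, whence $|f\cap g|=|\mu\cap g|\le|e'\cap g|<r-1$ for $g\in M_1\setminus\{e'\}$) would give a maximum matching of $T_1$ avoiding $e'$ — a contradiction. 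Hence $T_2:=T_1-e'$ is a tree with fewer edges; since every maximum matching of $T_1$ uses $e'$, $\nu(T_2)=\nu(T_1)-1$, so by induction $\tau(T_2)=\nu(T_1)-1$. Adjoining $\sigma$ to a minimum cover of $T_2$ gives a cover of $T$ of size $\nu(T_1)=\nu(T)$ (it covers $e'$ and all the $\sigma\cup\{v_i\}$ via $\sigma$, and the rest via the cover of $T_2$), so $\tau(T)\le\nu(T)\le\tau(T)$, completing the induction.

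The step I expect to need the most care is the second case, in particular the claim that $e'$ is a leaf edge of $T_1$. Without it the induction would leave the class of trees: deleting $e'$ — or the whole star through $\sigma$ — from a tree can produce an $r$-graph that is not a disjoint union of trees, since two surviving edges may meet in fewer than $r-1$ vertices and yet not be disjoint, and the exchange argument is exactly what rules this out in the one situation where the second deletion is required. Secondary points to pin down are the breadth-first-tree facts used along the way: that a maximum-depth vertex lies in a unique edge, that the children of $\sigma$ are leaf edges, that removing leaf edges preserves being a tree, and that $\sigma$ has co-degree at most one in $T_1$. (Alternatively, one could sidestep the case analysis by observing that $T^{(r-1)}$ is a subtree hypergraph — the vertex set $D(T)$ carries a tree in which each $\binom{e}{r-1}$ is a star about $\mathrm{base}(e)$ — and then invoking the classical fact that a family of subtrees of a tree has equal transversal and matching numbers; but the inductive argument above stays closer to the constructions used elsewhere in the paper.)
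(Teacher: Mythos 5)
Your argument is correct, but it is organized differently from the paper's. Both proofs begin the same way, by peeling off a deepest star: take a maximum-depth edge $e$ with base $\sigma$ and parent edge $e'$ (the paper's $f$), and put $\sigma$ into the cover. From there the paper performs a single decomposition: it deletes all edges with base $\sigma$ and splits what remains into the depth-one stars rooted at the other child $(r-1)$-sets of the parent edge together with the leftover subtree, applies the induction hypothesis to all pieces simultaneously, and exhibits a matching ($e$ plus the pieces' matchings) and a cover ($\sigma$ plus the pieces' covers) of the same size. You instead delete only the star at $\sigma$, split into two cases according to whether $\nu$ drops, and in the equality case use an exchange argument to show that every maximum matching of $T_1$ uses $e'$ and that $e'$ can be deleted while staying a tree, so $\nu(T_1-e')=\nu(T_1)-1$ and $\sigma$ together with a minimum cover of $T_1-e'$ finishes the induction. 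What your route buys is that you never assemble a matching from several pieces and hence never have to check compatibility across pieces (note $e\cap e'=\sigma$ has $r-1$ vertices, so an edge such as $e'$ chosen inside a piece could clash with $e$); your exchange lemma absorbs exactly that delicacy, at the price of the case analysis and of the tree-structure facts you list, all of which do hold. One small repair: in the key sub-step, quantify $\mu$ only over the child $(r-1)$-subsets of $e'$, excluding the base of $e'$; for a sibling edge $f$ whose base equals the base of $e'$, the new vertex of $f$ need not have maximum depth, so $f$ need not be a leaf and the exchange could fail, but such edges are irrelevant to whether $T_1-e'$ is a tree since their base remains supported by $e'$'s parent (or is $\rho$), so the restriction costs nothing. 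Your closing remark is also sound: viewing $T^{(r-1)}$ as a family of subtrees (stars about the bases) of a tree on $D(T)$ and quoting the classical $\tau=\nu$ theorem for subtree hypergraphs gives an alternative proof that the paper does not use.
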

\begin{proof}
We prove the lemma by induction on $|T|$. Let $t$ be the depth of $T$. The statement is trivial when $t \leq 1$, so assume that $t \geq 2$. Let $e$ be an edge of depth $t$, $f$ the parent of $e$. Suppose $\sigma$ and $\gamma$ are the bases of $e$ and $f$, respectively.

Let $T_0$ be the set of edges with base $\sigma$. We decompose $T \setminus (f \cup T_0)$ into trees $T_1, \dots, T_{r-1}$, where $T_1, \dots, T_{r-2}$ are trees of depth $1$ rooted at $(r-1)$-sets (not including $\sigma$ and $\gamma$) that are children of $f$ and $T_{r-1}$ is $T \setminus \left(f \cup \bigcup_{i = 0}^{r-2} T_i\right)$. Note that if $s$ is an edge of $T_i$ and $t$ is an edge of $T_j$ with $i \neq j$, then $|s \cap t| < r-1$.
For every $1 \leq i \leq r-1$, let $M_i$ and $C_i$ be a maximum matching and maximum cover of $T_i$ respectively, and note that, by induction, $|M_i| = |C_i|$. Finally, since $M = \{ e \} \cup \bigcup_{i = 1}^{r-1} M_i$ and $C = \{ \sigma \} \cup \bigcup_{i = 1}^{r-1} C_i$ are a matching and cover, respectively, in $T$ with $|M| = |C|$, the assertion follows.
\end{proof}

For an $r$-graph $G$ with distinguished $(r-1)$-set $\rho \subseteq V(G)$, and for $\gamma \in {\mathbb N}$, let $S_\gamma(\rho)$ be the subgraph of $G$ consisting of the edges of $G$ that lie on a path of length at most $\gamma$ rooted at $\rho$, and let $S(\rho)=\bigcup_\gamma S_\gamma(\rho)$.

We say $G$ is \emph{connected} iff any there is a path between any two edges of $G$. A \emph{connected component} of $G$ is a maximal connected subgraph of $G$, and is trivial if it is a single edge. For $\sigma \in D(G)$, note that $S(\sigma)$ is the connected component containing $\sigma$.

\begin{prop}
\label{prop:treewhp}
Let $H = H_r(n, p)$ and $\gamma = o(\log n/\log\log n)$. Then, for any $(r-1)$-set $\rho \subseteq V(G)$, the probability that $S_\gamma(\rho)$ is not a tree is $n^{-1+o(1)}$. 
\end{prop}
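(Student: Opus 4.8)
The goal is to show that a breadth-first exploration of depth $\gamma = o(\log n/\log\log n)$ from a fixed $(r-1)$-set $\rho$ in $H_r(n,p)$ with $p = \Theta(1)/n$ almost surely produces a tree, i.e., no "collision" occurs during the exploration. The natural approach is a first-moment/union bound over the possible ways the exploration can fail to be a tree. First I would set up the exploration process precisely: reveal edges of $H$ layer by layer, where at each step we have a current set of "active" $(r-1)$-sets, and for each such $\sigma$ we reveal $N_H(\sigma)$. A failure to be a tree means that at some step we discover an edge $e = \sigma \cup \{v\}$ where either $v$ is a previously-seen vertex, or some $(r-1)$-subset of $e$ other than $\sigma$ was already seen — equivalently, the edge $e$ creates a cycle among the elements (edges, vertices, $(r-1)$-sets) explored so far. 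So I want to bound the probability that $S_\gamma(\rho)$ contains a subgraph that is "one edge more than a tree."

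**Key steps.** (1) Bound the size of a tree of depth $\gamma$: since $r$ is fixed, each edge has at most $r-1$ child $(r-1)$-sets and bounded branching otherwise, so a depth-$\gamma$ tree has at most $(r-1)^{O(\gamma)} = \gamma^{O(\gamma)}$-ish... more carefully, the number of edges is at most $C^\gamma$ for a constant $C = C(r)$, hence the number of vertices and $(r-1)$-sets involved is also $C^\gamma$. Actually the relevant bound: on the event that $S_{\gamma-1}(\rho)$ \emph{is} a tree, the number of active $(r-1)$-sets and revealed vertices at each layer grows by a bounded factor, so $|V(S_\gamma(\rho))|$ and the number of edges are at most $n^{o(1)}$ when $\gamma = o(\log n/\log\log n)$ — this is exactly where the bound on $\gamma$ is used. (2) Condition on the exploration being a tree up through some layer, and compute the probability that the next edge revealed causes a collision. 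If we have revealed $W$ vertices so far (with $|W| = n^{o(1)}$) and we reveal $N_H(\sigma)$ for an active $(r-1)$-set $\sigma$, the expected number of neighbors in $W$ is $|W| \cdot p = n^{o(1)} \cdot \Theta(1/n) = n^{-1+o(1)}$; similarly the probability that a newly added edge $e = \sigma \cup \{v\}$ (with $v \notin W$) has some other $(r-1)$-subset already in $D(S)$ is, by a union bound over the $n^{o(1)}$ previously-seen $(r-1)$-sets and the structure of how they'd have to overlap, again $O(|D(S)| \cdot (\text{something}) \cdot p) = n^{-1+o(1)}$, using that fixing $r-2$ of the vertices of such a subset still leaves a factor of $p$ or a count bounded by $n^{o(1)}$. (3) Union-bound over all $n^{o(1)}$ edges revealed during the exploration: the total failure probability is $n^{o(1)} \cdot n^{-1+o(1)} = n^{-1+o(1)}$.

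**Main obstacle.** The crux is the bookkeeping in step (2): enumerating \emph{all} the ways a single new edge can fail to extend the current structure as a tree, and checking that each contributes only $n^{-1+o(1)}$ after summing over the $n^{o(1)}$ already-explored elements. The collision "$v$ already seen" is easy; the subtler collisions are when the new edge $e$ shares an $(r-1)$-subset (or is forced to repeat an existing edge) with the explored part in a way not anchored at $\sigma$ — one must argue that any such coincidence pins down enough vertices that the residual probability still carries a spare factor of $p = \Theta(1/n)$, so that multiplying by $|V(S)|^{O(r)} = n^{o(1)}$ keeps things below $n^{-1+o(1)}$. Making the conditioning rigorous (revealing edges in a fixed order so that at the moment we test a potential new edge, its presence/absence is still a fresh independent coin) is a standard but slightly delicate point; I would handle it by exposing $\binom{V}{r}$ in a fixed order compatible with the BFS and using that the relevant indicator is independent of everything revealed so far. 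Once the per-edge bound $n^{-1+o(1)}$ is in hand, the conclusion follows immediately from the union bound over $n^{o(1)}$ edges.
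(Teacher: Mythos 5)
There is a genuine gap, and it sits exactly where your argument gets its quantitative strength. In step (1) you assert that, on the event that the exploration so far is a tree, ``the number of active $(r-1)$-sets and revealed vertices at each layer grows by a bounded factor,'' so that $S_\gamma(\rho)$ has $n^{o(1)}$ elements. That is false as a deterministic statement: the number of edges of $H_r(n,p)$ containing a fixed $(r-1)$-set is $\Bin(n-r+1,p)$, which has constant mean but no deterministic bound, so the branching factor of the exploration is unbounded. The size of the depth-$\gamma$ exploration is only $n^{o(1)}$ \emph{in expectation} (it is dominated by a Galton--Watson-type process with mean offspring $\Theta(1)$), and a first-moment (Markov) bound on it fails with probability that is only polynomially small --- far larger than the $n^{-1+o(1)}$ you must prove. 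Consequently the scheme ``condition on the explored set having size $n^{o(1)}$, then union-bound the per-step collision probability $|W|p=n^{-1+o(1)}$ over $n^{o(1)}$ steps'' does not deliver the stated bound: the conditioning event itself fails too often, and the number of steps you union over is random and correlated with the collisions. To rescue your route you would need genuinely more, e.g.\ higher-moment or exponential-tail control of the truncated branching process (so that the size exceeds $n^{\varepsilon}$ with probability at most $n^{-1}$, with $\varepsilon\to 0$ chosen carefully), or a direct bound on the \emph{expected number} of collisions, which requires a second-moment bound on the exploration size; none of this appears in the proposal, and it is the real crux, not the case analysis of collision types you flag as the main obstacle.

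The paper avoids the issue entirely with a static first-moment count: if $S_\gamma(\rho)$ is not a tree, there is a vertex reachable from $\rho$ by two distinct paths, and (after truncation) the union of the two paths is a subhypergraph containing $\rho$ with $k\le 2\gamma+O(1)$ vertices and $k-1$ edges. The expected number of such configurations is at most
$\sum_{k\le 2\gamma}\binom{n-r+1}{k-r+1}\binom{\binom{k}{r}}{k-1}p^{k-1}=n^{2-r+o(1)}\le n^{-1+o(1)}$ for $r\ge 3$ and $p=\Theta(1/n)$, using $\gamma=o(\log n/\log\log n)$ to absorb the $k^{rk}$-type factor. Because the sum over all possible sizes $k$ is carried out inside the expectation, no a priori control of the exploration size is needed. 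I would suggest either adopting this counting argument or supplying the missing tail/second-moment estimates for the exploration process; as written, your proof establishes at best a bound of the form $n^{-c}$ for some small constant $c$, not $n^{-1+o(1)}$.
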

\begin{proof}
Suppose $S := S_\gamma(\rho)$ is not a tree. Then there is a vertex $v \in V(S)$, $v \notin \rho$, with the property that that there are two distinct paths from $\rho$ to $v$. 
Suppose that $v$ is such a vertex with smallest depth. These two paths may initially agree; say they both start $u_1, u_2, \ldots, u_t$ for some $t \geq r-1$, and one of the paths continues $u^{(1)}_{t+1}, \ldots$ while the other continues $u^{(2)}_{t+1}, \ldots$, with $u^{(1)}_{t+1} \neq u^{(2)}_{t+1}$. 

By appropriately truncating both paths, we can assume that the sequences $(u^{(1)}_{t+1}, u^{(1)}_{t+2}, \ldots, v)$ and $(u^{(2)}_{t+1}, u^{(2)}_{t+2}, \ldots, v)$ have nothing in common except $v$. If the first path contains $t^{(1)}$ vertices, and the second has $t^{(2)}$, then the union of the two paths is a subhypergraph of $H=H_r(n,p)$ with $t^{(1)}+t^{(2)}-t-1$ vertices and $t^{(1)}+t^{(2)}- t - (r-1)$ edges.
Noting that $t^{(1)}+t^{(2)} \leq 2\gamma$, the probability that $H$ has a subhypergraph with that many vertices and edges, including the vertices in $\rho$, is upper bounded by
\begin{equation} \label{eq-no-dense}
\sum_{k \leq 2\gamma} \binom{n-r+1}{k-r+1}\binom{\binom{k}{r}}{k-r+2} p^{k-r+2},
\end{equation}
where here $k$ is the number of vertices. Using $p=\Theta(1/n)$ and standard binomial coefficient estimates, as long as $\gamma = o(\log n/\log\log n)$ we have that the expression in \eqref{eq-no-dense} is $o(n^{-1 + o(1)})$. 
\end{proof}

Let $T_r^d$ be the (Galton-Watson-like, possibly infinite) random tree rooted at an $(r-1)$-set $\rho$ generated as follows: At the first step, we let $\rho$ give birth to $\Poisson(d)$ many edges. At each subsequent step, each $(r-1)$-set in $D(T_r^d)$ generated at the previous step gives birth to $\Poisson(d)$ many edges, independent of other $(r-1)$-sets. Since $r$ is always fixed, we write $T^d$ instead of $T_r^d$. 

\begin{prop}
\label{prop:gwtree}
For the random tree $T^d$
\begin{enumerate}
   \item $T^d$ is finite with probability 1 iff $d \leq 1/(r-1)$
    \item The expected number of edges at depth $i$ is $((r-1)d)^i/(r-1)$.
\end{enumerate}
\end{prop}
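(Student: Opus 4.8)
The plan is to recognize the growth of $T^d$ as an ordinary Galton--Watson branching process whose ``individuals'' are the $(r-1)$-sets appearing in $D(T^d)$, and then read off both statements from standard branching-process facts.

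First I would set up the bookkeeping. For $i \ge 1$ let $E_i$ be the number of edges added at step $i$ — equivalently, the number of edges of depth $i$ — and for $i \ge 0$ let $W_i$ be the number of $(r-1)$-sets of $D(T^d)$ created at step $i$, with $W_0 = 1$ (the single set $\rho$). When an edge $e = \{v'\}\cup\sigma$ is added, its $(r-1)$-subsets other than the base $\sigma$ are exactly the $r-1$ sets of the form $\{v'\}\cup(\sigma\setminus\{w\})$, $w\in\sigma$; since $v'$ is a brand new vertex these are pairwise distinct and were not previously in $D(T^d)$, so $W_i = (r-1)E_i$ for every $i \ge 1$. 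By construction each of the $W_{i-1}$ sets created at step $i-1$ independently gives birth to $\Poisson(d)$ edges at step $i$, and hence to $(r-1)\Poisson(d)$ new $(r-1)$-sets; consequently $(W_i)_{i\ge 0}$ is precisely a Galton--Watson process started from one individual whose offspring distribution is $(r-1)\Poisson(d)$, of mean $(r-1)d$. Verifying that this is a genuine i.i.d.-offspring process — which rests on the tree structure ruling out collisions among the freshly created $(r-1)$-sets, so that $W_i = (r-1)E_i$ holds exactly — is the one point that needs a little care, and is really the only obstacle; everything else is off-the-shelf.

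For part~(2), the standard identity $\E W_i = m^i$ for a Galton--Watson process of offspring mean $m$ (induction on $i$, conditioning on $W_{i-1}$ and using linearity of expectation) gives $\E W_i = ((r-1)d)^i$, hence $\E E_i = \E W_i/(r-1) = ((r-1)d)^i/(r-1)$ for all $i\ge 1$.

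For part~(1), note that $T^d$ is finite if and only if the process $(W_i)$ eventually reaches $0$, i.e.\ the total progeny $\sum_i W_i$ is finite. The offspring law $(r-1)\Poisson(d)$ is not the point mass at $1$ — it equals $0$ with probability $e^{-d}>0$ — so the classical Galton--Watson extinction dichotomy applies: extinction occurs almost surely exactly when the offspring mean is at most $1$. Since that mean is $(r-1)d$, we conclude that $T^d$ is finite with probability $1$ iff $(r-1)d\le 1$, i.e.\ iff $d\le 1/(r-1)$. If one wishes to bypass the extinction theorem in the subcritical range $d<1/(r-1)$, a first-moment bound already suffices there, since $\P(E_i>0)\le \E E_i = ((r-1)d)^i/(r-1)\to 0$; only the critical case $d=1/(r-1)$ and the survival direction $d>1/(r-1)$ genuinely invoke the standard branching-process input.
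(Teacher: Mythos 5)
Your proof is correct and follows essentially the same route as the paper: both identify the $(r-1)$-sets of $D(T^d)$ as the individuals of a Galton--Watson process with offspring law $(r-1)\Poisson(d)$ (mean $(r-1)d$), then read off part (2) from $\E[\text{population at depth } i]=m^i$ and part (1) from the standard extinction dichotomy. Your extra care about collisions among newly created $(r-1)$-sets and the explicit factor $W_i=(r-1)E_i$ is a fine (slightly more detailed) rendering of the same argument.
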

\begin{proof}
We associate with $T^d$ the random  (ordinary) tree $U$ where $V(U) = D(T)$, and $f$ is a child of $e$ iff $e$ is the base of $f$ in $T^d$. Then $U$ is a Galton-Watson tree where the number of children of each vertex has distribution $L = (r-1)\Poisson(d)$.
That the Galton-Watson tree is finite iff $\E[L] \leq 1$ (unless $L \equiv 1$), and that the number of vertices at depth $i$ is $\E[L]^i$ are basic properties of Galton-Watson trees (see{~\cite[Propositions~5.4~and~5.5]{lp17}}). The claims follow from these properties.
\end{proof}

{\bf The breadth-first tree}. For $\rho \in D(H)$ the breadth-first tree $S^*(\rho)$ rooted at $\rho$ is obtained by processing $(r-1)$-sets in the order in which they enter the tree. Specifically, let $\prec$ be an arbitrary order on $\binom{V(H)}{r-1}$, and set $P_0 = \{\rho\}$, $E_0$ to be the set of edges containing $\rho$, $D_0 = D(E_0)$, and $Q_0 = D_0 \setminus P_0$.

At step $i \in \NN$, the $(r-1)$-set $\sigma_i \in Q_{i-1}$ is {\em processed}, producing a sequence $(E_i, P_i, Q_i, X_i)$. We process $(r-1)$-sets in $Q_{i-1}$ in the order they enter $Q_{i-1}$, breaking ties according to the ordering $\prec$. Each $E_i$ will be the set of edges of a tree, with $D_i = D(E_i) = P_i \sqcup Q_i$, and $X_i = V \setminus V(E_i)$. $P_i$ is the collection of $(r-1)$-sets that have been processed, and the $(r-1)$-sets in $Q_i$ are {\em in queue}, i.e., waiting to be processed. We stop when $Q_i$ is empty, producing the tree $S^*(\rho)$.

Processing an $(r-1)$-set $\sigma_i \in Q_{i-1}$ means that we produce $E_i$ by adding to $E_{i-1}$ all edges of the form $\sigma_i \cup \{ v \}$ where $v \in X_{i-1}$. We then set $P_i = P_{i-1} \cup \{\sigma_i\}$, and set $D_i, Q_i, X_i$ as above.

Note that, for each $i$, $E_i$ is indeed a tree. This is clearly true for $E_0$. That it is true for $E_i$ when $i > 0$, follows from the fact that if $\sigma \in Q_i$, then it is in a unique edge of $E_i$, and if $\sigma \in P_i$, i.e., $\sigma$ has been processed, then $N(\sigma) \cap X_i = \emptyset$.

Let $S^*_\gamma(\rho)$ be the breath-first tree consisting of edges that lie on paths of length at most $\gamma$ rooted at $\rho$.
The following is straightforward from the definitions.
\begin{prop}
\label{prop:bftreeequality}
For any $\gamma$ and $\rho \in D(H)$, if $S_\gamma(\rho)$ is a tree then $S_\gamma(\rho) = S^*_\gamma(\rho)$.
\end{prop}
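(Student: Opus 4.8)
The plan is to prove the two inclusions $S^*_\gamma(\rho)\subseteq S_\gamma(\rho)$ and $S_\gamma(\rho)\subseteq S^*_\gamma(\rho)$, where I read $S^*_\gamma(\rho)$ as the edges of the tree $S^*(\rho)$ lying on a path of length at most $\gamma$ rooted at $\rho$ (equivalently, of depth at most $\gamma$). The first inclusion needs no hypothesis: as established in the text $S^*(\rho)$ is a subhypergraph of $H$ that is itself a tree, so an edge $e$ of $S^*(\rho)$ of depth $k$ lies on the unique path from $\rho$ to $e$ inside $S^*(\rho)$, and that path is a path of length $k$ rooted at $\rho$ in $H$; hence $e\in S^*_\gamma(\rho)$ forces $k\le\gamma$ and $e\in S_\gamma(\rho)$.

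For the reverse inclusion I would induct on the path length $m$, proving that for $1\le m\le\gamma$ every edge $e$ lying on a path of length $m$ rooted at $\rho$ in $H$ belongs to $S^*(\rho)$ at depth at most $m$. The base $m=1$ is immediate, since such an $e$ contains $\rho$ and hence lies in the initial edge set $E_0$. For $m\ge 2$, fix such a path $\pi=(f_1,\dots,f_m=e)$; by the inductive hypothesis applied to $(f_1,\dots,f_{m-1})$ we have $f_{m-1}\in S^*(\rho)$ at depth at most $m-1$, so the base $\sigma$ of $e$ along $\pi$ (an $(r-1)$-subset of $f_{m-1}$) lies in $D(S^*(\rho))$ at depth at most $m-1$ and is eventually processed, say at step $i$. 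We may assume $e\notin S^*(\rho)$. Using that breadth-first processing reaches $(r-1)$-sets in non-decreasing order of depth, every edge of $E_{i-1}$ has depth at most $m\le\gamma$, so if the new vertex $v$ of $e$ along $\pi$ already lies in $V(E_{i-1})$ it was introduced by some edge $g\ne e$ with $g\in S^*_\gamma(\rho)$; then $\sigma,v$ are elements of the tree $S^*_\gamma(\rho)$, which by the first inclusion sits inside the tree $S_\gamma(\rho)$, so $S^*_\gamma(\rho)$ contains a path from $\sigma$ to $v$ avoiding $e$, and together with $e$ this yields two distinct $\sigma$--$v$ paths inside $S_\gamma(\rho)$, contradicting the hypothesis that it is a tree. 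Hence $v\notin V(E_{i-1})$, so $e$ is added when $\sigma$ is processed, at depth at most $m$, completing the induction. Applying this to every edge of $S_\gamma(\rho)$ gives $S_\gamma(\rho)\subseteq S^*_\gamma(\rho)$, and with the first inclusion we obtain equality.

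The content of the argument is entirely in the bookkeeping, and I expect that to be the only real obstacle: one must check that the first-in-first-out processing order visits $(r-1)$-sets (and hence produces edges) in non-decreasing order of depth, so that a bound on ``the step at which $\sigma$ is processed'' converts to a depth bound on $E_{i-1}$; and one must handle the base/parent/new-vertex formalism precisely enough that the two $\sigma$--$v$ paths really do violate the uniqueness-of-paths property of the tree $S_\gamma(\rho)$, rather than merely contradicting an intuition carried over from ordinary breadth-first search. This is presumably why the proposition is flagged as straightforward from the definitions, but it is exactly where one must consult the construction rather than wave hands.
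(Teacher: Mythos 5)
Your argument is correct, and since the paper gives no proof of this proposition (it is declared ``straightforward from the definitions''), your two-inclusion verification --- the ancestor chain in $S^*(\rho)$ giving $S^*_\gamma(\rho)\subseteq S_\gamma(\rho)$, induction on path length with the BFS depth-monotonicity, and the tree hypothesis on $S_\gamma(\rho)$ ruling out the case where the new vertex was already reached by another route --- is exactly the verification the paper is alluding to. One small repackaging is needed in the inductive step: instead of ``we may assume $e\notin S^*(\rho)$'' (which leaves the required depth bound unaddressed when $e\in S^*(\rho)$), split on whether $v\in V(E_{i-1})$: if $v\notin V(E_{i-1})$ then $e$ is added with base $\sigma$ at depth at most $m$; if $v\in V(E_{i-1})$ then either $e\in E_{i-1}$, in which case your own observation that every edge of $E_{i-1}$ has depth at most $m$ supplies the bound, or $e$ is never added to $S^*(\rho)$, and your two-paths contradiction with the treeness of $S_\gamma(\rho)$ applies.
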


\begin{prop}
\label{prop:coupling}
For $\gamma =o(\log n / \log \log n)$ and $\rho \in D(H)$, we may couple $S^*_\gamma(\rho)$ and $T^d_\gamma$ so they are equal w.h.p.
\end{prop}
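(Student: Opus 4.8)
The plan is to build the coupling step by step, processing $(r-1)$-sets in the breadth-first order and coupling, at each step, the batch of new edges produced in $S^*(\rho)$ with the batch of new edges produced in $T^d$. First I would set up a joint process: we run the breadth-first construction of $S^*_\gamma(\rho)$ and the Galton--Watson-like construction of $T^d_\gamma$ in lockstep, where at step $i$ the $(r-1)$-set $\sigma_i$ in queue gives birth, in $H$, to $\Bin(|X_{i-1}|, p)$ many new edges (one for each $v \in X_{i-1}$, independently, by the definition of $H_r(n,p)$), and in $T^d$ to $\Poisson(d)$ many new edges. The key point is that $|X_{i-1}| = n - |V(E_{i-1})|$, and $|V(E_{i-1})|$ is bounded by the total number of vertices added in at most (roughly) $2\gamma$ generations' worth of processing — which, on the event that everything stays tree-like and doesn't blow up, is $n^{o(1)}$, hence $|X_{i-1}| = n - n^{o(1)}$. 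So at each step we are coupling a $\Bin(n+c_i, p)$ with a $\Poisson(np) = \Poisson(d + O(p))$ with $|c_i| = n^{o(1)}$, and Proposition~\ref{prop:couplingprob} gives total variation distance $|c_i| p + O(p) = n^{-1+o(1)}$ per step (after also absorbing the $\Poisson(d)$ versus $\Poisson(np)$ discrepancy, which is $O(p) = n^{-1+o(1)}$ as well).

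The next step is to control the number of steps. By Proposition~\ref{prop:gwtree}, the expected number of edges of $T^d$ at depth $i$ is $((r-1)d)^i/(r-1)$, so the expected total number of edges within depth $\gamma$ is at most $((r-1)d)^{\gamma+1}$, which for $\gamma = o(\log n/\log\log n)$ and $d = \Theta(1)$ is $n^{o(1)}$; by Markov's inequality the number of edges (hence the number of $(r-1)$-sets ever placed in queue, hence the number of processing steps) is $n^{o(1)}$ with probability $1 - o(1)$. Condition on this event. Then a union bound over the at most $n^{o(1)}$ steps, each contributing failure probability $n^{-1+o(1)}$ to the coupling, shows that with probability $1 - n^{-1+o(1)} = 1 - o(1)$ every step's coupling succeeds, i.e. the batch of children of $\sigma_i$ agrees in $S^*(\rho)$ and $T^d$ at every step. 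I also need to invoke Proposition~\ref{prop:treewhp} (together with Proposition~\ref{prop:bftreeequality}) so that on a further $1 - n^{-1+o(1)}$ event, $S_\gamma(\rho)$ is genuinely a tree and coincides with $S^*_\gamma(\rho)$; and I should note that on the good event the bound $|V(E_{i-1})| = n^{o(1)}$ used above is actually valid, so the argument is not circular — one runs the two constructions together and the "bad" event (too many vertices, or a coupling failure, or non-tree structure) has probability $o(1)$, and off that event $S^*_\gamma(\rho) = T^d_\gamma$ as rooted trees.

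The main obstacle I expect is making the bookkeeping of the coupling genuinely sequential and self-consistent: the bound $|c_i| = |X_{i-1}| - (n - (r-1))| = n^{o(1)}$ needed to apply Proposition~\ref{prop:couplingprob} at step $i$ depends on the process not having produced too many vertices yet, which is only guaranteed on the good event we are trying to establish. The clean way to handle this is to define a stopping time $\tau^*$ = the first step at which either more than $M := ((r-1)d)^{\gamma+1} \cdot \omega(1)$ edges have appeared, or the structure ceases to be a tree; run the coupled process only up to $\tau^*$; observe that before $\tau^*$ all the per-step total-variation bounds are legitimately $n^{-1+o(1)}$; bound $\P(\tau^* \le \gamma\text{-many generations})$ by Markov plus Proposition~\ref{prop:treewhp}; and conclude. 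The remaining steps — the Poisson-vs-Binomial estimate itself and the binomial-coefficient bounds hiding inside "$n^{o(1)}$" — are routine given the tools already assembled in Sections~\ref{sec:probabilistic} and \ref{sec:coupling}.
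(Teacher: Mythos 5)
Your proposal is correct and follows essentially the same route as the paper: couple the breadth-first exposure of $S^*_\gamma(\rho)$ with the generations of $T^d_\gamma$ step by step, bound each step's failure probability via Proposition~\ref{prop:couplingprob} by $O(p\,|V(E_i)|)$, and control the number of steps and the size of $|V(E_i)|$ by Proposition~\ref{prop:gwtree} plus Markov (the paper truncates at $\kappa=n^{0.1}$, which plays exactly the role of your stopping time $\tau^*$). The only superfluous ingredient is your appeal to Propositions~\ref{prop:treewhp} and~\ref{prop:bftreeequality}: $S^*_\gamma(\rho)$ is a tree by construction, so those are needed only for Corollary~\ref{cor:coupling}, not for this proposition.
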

\begin{proof}
We generate $S^* := S^*(\rho)$ and $T^d$ in parallel by exposing edges as needed. When processing an $(r-1)$-set $\sigma$ in $S^*$, we also specify the number of edges containing $\sigma$ in $T^d$, coupling so these numbers agree as often as possible. If the numbers agree, then we can couple so the trees agree as well. If at any step, the trees do not agree, we say the coupling has failed. 

At the first step, we expose $N(\rho)$, specifying the edges that contain $\rho$. Note that $N(\rho)$ has distribution $\Bin(n - (r-1), p)$. By Proposition~\ref{prop:couplingprob} on processing $\rho$ the probability that the coupling fails is bounded above by \[ \delta(\Bin(n - (r-1), p), \Poisson(d)) = O(p).\]

Let $Q = \{ |T^d_\gamma| > \kappa\}$ with $\kappa = n^{0.1}$. By Proposition~\ref{prop:gwtree} and Markov's inequality $P(Q) = o(1)$.  At step $i$, we expose edges of the form $\sigma_i \cup \{v\}$ where $v \in X_i$. The distribution of the number of such  edges is $\Bin(|X_i|, p)$. By Proposition~\ref{prop:couplingprob}, the probability that the coupling fails is  bounded above by 
\[ \delta(\Bin(|X_i|, p),  \Poisson(d)) \leq \left|(n - |X_i|) \right|p + O(p) = O(p|V(E_i)|). \]
As long as $|V_i| < \kappa$, this probability is $O(n^{-0.9})$. Thus, the total probability that the coupling fails is
\[P(Q) + O(p) + \kappa O(n^{-0.9}) = o(1).\]
\end{proof}

Propositions~\ref{prop:treewhp}, \ref{prop:bftreeequality}, and \ref{prop:coupling} give what is the main point of this section.
\begin{cor}
\label{cor:coupling}
For $\gamma = o(\log n / \log \log n)$ and $\rho \in D(H)$, we may couple $S_\gamma(\rho)$ and $T^d_\gamma$ so they are equal w.h.p.
\end{cor}

\section{Proof of Theorem~\ref{thm:small-p}}
\label{sec:small-p}

Recall that the goal here is to show that for every $r \geq 3$, if $d \leq 1/(r-1)$, then w.h.p.  $\nu(H) \sim \tau(H)$, where (again recall) $H=H_r(n,p)$ is the random $r$-graph on $n$ vertices with edges selected independently with probability $p$. 

Assume first that $d = \Omega(1)$, and that $d \leq 1/(r-1)$. For any $\rho \in D(H)$, under the coupling of Section~\ref{sec:coupling}, w.h.p. $S(\rho) = T^d$. Indeed, with $\gamma =o(\log n / \log \log n)$, we have that $S_\gamma(\rho) = T_\gamma^d$ (by Corollary~\ref{cor:coupling}), and that $T^d$ is a finite tree (by Proposition~\ref{prop:gwtree}) each of which happens w.h.p. It follows that the expected number of $(r-1)$-sets of $V(H)$ in connected components that are not trees is $o(n^{r-1})$, so, by Markov's inequality, the actual number is $o(n^{r-1})$ w.h.p. 

Now note that $\tau(H) = \sum_{H'} \tau(H')$ and $\nu(H) = \sum_{H'} \nu(H')$ where the $H'$ ranges over all connected components of $H$. By Proposition~\ref{prop:finitetree}, we have
\[ \tau(H) = \sum_{H''} \tau(H'') + o(n^{r-1}) = \sum_{H''} \nu(H'') + o(n^{r-1}) = \nu(H) + o(n^{r-1}), \]
where $H''$ ranges over connected components that are trees. It suffices to show that w.h.p. $\nu(H) = \Omega(n^{r-1})$ which, by~\eqref{eq:azuma1}, follows from the fact that $\E \nu(H) = \Omega(n^{r-1})$. Now $\E \nu(H)$ is at least the expected number of {\em isolated edges} which is \[ \binom{n}{r} p \left((1 - p)^{n-r}\right)^r \geq \binom{n}{r} p \left(1 - p\right)^{rn} = \Omega(n^{r-1}). \]

Suppose now that $d = o(1)$. Let $X$ be the number of edges in $H$, $X'$ be the number of edges that share $r-1$ vertices with another edge, and $Y$ be the number of $(r-1)$-sets that lie in exactly one edge. It suffices to show that w.h.p. $X' = o(X)$. Indeed, this gives $\nu(H) \geq X + o(X)$, which combined with $\nu(H) \leq \tau(H) \leq X$ implies the assertion.

The expected number of pairs of edges that share a given $(r-1)$-set is $O(n^{2}p^2)$, hence, the expected number of pairs of edges that share an $(r-1)$-set is $O(n^{r+1}p^2)$. It follows by Markov's inequality that, for $p =o(n^{-(r+1)/2})$, w.h.p. $X' = 0$.

Otherwise, we have $X' \leq r X - Y$. Now $\E X = \binom{n}{r}p \sim \frac{n^rp}{r!}$, and 
$$
\E Y = \binom{n}{r-1} (n - (r-1)) p (1-p)^{n-r} \sim \frac{n^rp}{(r-1)!} \sim r\E X.
$$ 
This gives $\E X' \leq r\E X - \E Y = o(\E X)$ implying that  w.h.p. $X' = o(\E X)$. But for $p = \omega(n^{-r})$ (recall that we are assuming $p = \Omega(n^{-(r+1)/2})$) w.h.p. $X \sim \E X$, giving $X' = o(X)$  w.h.p.

\section{Proof of Theorem~\ref{thm:matching-large-p}}
\label{sec:matching-large-p}

Recall that the goal here is to show that for every $r \geq 3$, if $d = \omega(1)$ (where $d=(n-(r-1))p$ is the expected number of edges containing a given $(r-1)$-set), then w.h.p. $\nu(H) \sim \frac{1}{r} \binom{n}{r-1} \sim \frac{n^{r-1}}{r!}$. That $\nu \leq \frac{1}{r}\binom{n}{r-1}$ follows from the facts that every edge contains $r$ $(r-1)$-sets and each $(r-1)$-set is in at most one edge of an $(r-1)$-matching.

For the lower bound, we will use the following fractional variant of Pippenger's Theorem (see e.g. \cite{furedi1996matchingscoverings} or \cite[Theorem 4.7.1]{alonspencer4}), that is a special case of \cite[Theorem 1.5]{kahn96pippenger}. For a hypergraph~$G$ and $\varphi: G \rightarrow [0, 1]$, let $\alpha(\varphi) = \max \sum \{\varphi(e): e \in G,~ x, y \in e\}$, where the maximum is taken over all pairs of distinct vertices $x, y \in V(G)$. Recall that $\varphi$ is a fractional matching if for all $v \in V(G)$, $\sum\{\varphi(e): e \ni v\} \leq 1$.
\begin{thm}
\label{thm:pippenger}
For fixed $r$, if $G$ is $r$-uniform and $\varphi: G \rightarrow [0, 1]$ is a fractional matching then \[\nu(G) > (1 + o(1)) \sum_{A \in G}\varphi(A), \]
where $o(1) \rightarrow 0$ as $\alpha(\varphi) \rightarrow 0$.
\end{thm}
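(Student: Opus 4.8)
The plan is to read the statement off as the advertised special case of Kahn's theorem. Kahn~\cite{kahn96pippenger} shows, for hypergraphs of bounded edge size, that if $\varphi$ is a fractional matching of a hypergraph $G$ then $G$ contains an (integral) matching of size at least $(1+o(1))\sum_{A\in G}\varphi(A)$, where the error term depends only on the quantity $\max_{x\neq y}\sum\{\varphi(A): x,y\in A\}$ and tends to $0$ as that quantity does. This maximum is exactly $\alpha(\varphi)$, and $r$ is fixed, so Theorem~\ref{thm:pippenger} is immediate. Thus at the level of this paper the proof is a one-line appeal to~\cite{kahn96pippenger}; the rest of this plan indicates, for completeness, why such a statement holds.

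The natural route to a self-contained proof is the R\"odl nibble. First I would reduce to rational $\varphi$ at the cost of an arbitrarily small multiplicative loss in both $\sum_A\varphi(A)$ and $\alpha(\varphi)$. Then, for a large integer $N$, I would pass to the multi-hypergraph $\widetilde G$ obtained by taking $\lfloor N\varphi(A)\rfloor$ copies of each edge $A$: since $\varphi$ is a fractional matching, $\widetilde G$ has maximum vertex degree at most $N$, maximum codegree at most $(1+o(1))N\alpha(\varphi)$, and $(1+o(1))N\sum_A\varphi(A)$ edges, while any matching of $\widetilde G$ collapses to a matching of $G$ of the same size. One then runs the greedy random matching process on $\widetilde G$ in $\Theta(\log(1/\alpha(\varphi)))$ rounds: in each round, select each surviving edge independently with probability proportional to $1/N$, retain the selected edges that are pairwise disjoint, and delete them together with every edge meeting them. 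A first-moment estimate shows that, because the weighted codegree is $o(N)$, only an $o(1)$ fraction of the surviving edge-weight is destroyed ``by collision'' in each round, so what remains at the end is an actual matching of $\widetilde G$, hence of $G$, of size $(1-o(1))\sum_A\varphi(A)$.

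The main obstacle is precisely the point where~\cite{kahn96pippenger} improves on the classical Pippenger--Spencer and Frankl--R\"odl theorems: the multi-hypergraph $\widetilde G$ arising from a fractional matching need not be (nearly) regular, so the off-the-shelf nibble theorems do not apply directly, and the obvious fix of padding $\widetilde G$ with pendant edges fails because each pendant edge introduces fresh under-saturated vertices. Pushing the nibble through in this irregular-but-fractionally-matched regime is exactly what Kahn's linear-programming bookkeeping accomplishes, controlling the process by the single scalar $\alpha(\varphi)$ in place of vertex degrees. For that reason I would not reproduce the argument and would instead invoke~\cite{kahn96pippenger} directly, as the paper does.
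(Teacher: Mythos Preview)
Your proposal is correct and matches the paper exactly: the paper does not prove Theorem~\ref{thm:pippenger} but simply states it as a special case of \cite[Theorem~1.5]{kahn96pippenger} (with additional pointers to \cite{furedi1996matchingscoverings} and \cite[Theorem~4.7.1]{alonspencer4}). Your additional sketch of the nibble argument and your remark about why the irregular/fractional setting is precisely the content of Kahn's result are helpful context, but go beyond what the paper itself provides.
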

We will apply Theorem~\ref{thm:pippenger} with $G = H^{(r-1)}$; recall from the introduction that $\nu^{(r-1)}(H)=\nu(H^{(r-1)})~(=\nu(G))$.

Choose $\varepsilon$ such that $\varepsilon = o(1)$ and $\varepsilon = \omega(d^{-1/2})$ (recall that $d = \omega(1)$). Set $D = (1 + \varepsilon) d$. 
Say that a vertex $\sigma \in V(G)$ is {\it heavy} if it lies in at least $D$ edges of $G$
and define the fractional matching $\varphi: G \rightarrow [0, 1]$ via
\[ \varphi(f) =
\begin{cases}
1/D &\mbox{if } f \mbox{ contains no heavy vertices} \\
0 & \mbox{otherwise }
\end{cases} \]
It is easy to see that $\varphi$ is indeed a fractional matching; moreover, any two distinct vertices $\sigma_1, \sigma_2 \in V(G)$ are contained in at most one edge of $G$, implying that $\alpha(\varphi) \leq 1/D \rightarrow 0$ (as $n \rightarrow \infty$). So, recalling Theorem~\ref{thm:pippenger}, to complete the proof that $\nu \gtrsim \frac{n^{r-1}}{r!}$ it suffices to show 
\begin{equation} \label{using-pippenger}
\sum_{A \in G} \varphi(A) \sim \frac{n^{r-1}}{r!} \mbox{\quad w.h.p.}
\end{equation}

We will establish (\ref{using-pippenger}) by showing that the expected number of edges in $G$ that contain heavy vertices is $o(n^rp)$, and so (by Markov's inequality) is w.h.p. $o(n^rp)$. Now note that the number of edges in $G$ (which equals the number of edges in $H$) is ${\rm Bin}(\binom{n}{r},p)$, so w.h.p. $|G| \sim \binom{n}{r} p$, and that also (by our choice of $\varepsilon$) $D \sim np$.
It follows that w.h.p. \[ \sum_{A \in H'}\varphi(A) \sim \binom{n}{r}p \cdot \frac{1}{np} \sim \frac{n^{r-1}}{r!}, \]
verifying (\ref{using-pippenger}).

So, letting $X$ be the number of edges in $G$ that contain heavy vertices, it remains to show $E(X)=o(n^rp)$. For each $\ell \in {\mathbb N}$, let $X_\ell$ be the number of vertices in $G$ that are in exactly $\ell$ edges, and for $a < b \in {\mathbb R}$ let $X_{[a,b)}$ be the number of vertices in $G$ that are in at least $a$ and fewer than $b$ edges. For a vertex $\sigma \in V(G)$, let $n_\sigma$ be the number of edges that $\sigma$ is in. We have
\begin{eqnarray}
\E X & = & \sum_{\gamma \geq \varepsilon:~ (1+\gamma)d \in {\mathbb N}}  \E X_{(1+\gamma)d} (1+\gamma)d \nonumber \\
& \leq & \sum_{i \geq 0} \E X_{[(1+2^i\varepsilon)d,(1+2^{i+1}\varepsilon)d)} (1+2^{i+1}\varepsilon)d \nonumber \\
& = & \sum_{i \geq 0} \binom{n}{r-1} P(n_\sigma \in [(1+2^i\varepsilon)d,(1+2^{i+1}\varepsilon)d)) (1+2^{i+1}\varepsilon)d \nonumber \\
& \leq & \binom{n}{r-1}d\sum_{i \geq 0} P(n_\sigma \geq (1+2^i\varepsilon)d)(1+2^{i+1}\varepsilon) \nonumber \\
& \lesssim & \frac{n^rp}{(r-1)!} \sum_{i \geq 0} P(n_\sigma \geq (1+2^i\varepsilon)d)(1+2^{i+1}\varepsilon).
\end{eqnarray}
Now, noting that $n_\sigma$ is ${\rm Bin}(n-(r-1),p)$, Theorem~\ref{thm:chernoff} gives
\begin{eqnarray*}
P(n_\sigma \geq (1+2^i\varepsilon)d) & \leq & \exp\left[-(2^i\varepsilon d)^2/(2(d + 2^i\varepsilon d/3))\right]\\
& = & \frac{1}{\exp\left[2^{2i}\varepsilon^2d/(2(1 + 2^i\varepsilon/3))\right]},
\end{eqnarray*}
so that
\begin{eqnarray*}
E(X)  \lesssim n^rp\sum_{i \geq 0} \frac{1 + 2^{i+1}\varepsilon}{\exp\left[2^{2i}\varepsilon^2d/(2(1 + 2^i\varepsilon/3))\right]}
 \leq n^rp\sum_{i \geq 0} \frac{1 + 2^{i+1}\varepsilon}{\exp[2^i \varepsilon d/4]}.
\end{eqnarray*}
Finally, recalling that $\varepsilon = o(1)$ and $\varepsilon = \omega(d^{-1/2})$ we obtain $E(X) =  o(n^rp)$ as required. 

\section{Proof of Theorem~\ref{thm:matching-medium-p}}
\label{sec:matching-medium-p}

Recall that the goal here is to prove that for every $r \geq 3$, if $d = \Theta(1)$ then w.h.p.
\[ \nu(H) > (1 + o(1)) \frac{1}{r} \alpha_r(d) \binom{n}{r-1}\]
where
\[\quad \alpha_r(d) = 1 - \left(\frac{1}{(r-1)d + 1}\right)^{1/(r-1)}.\] 

Given an $r$-graph $G$ and a function $w: G \rightarrow [0, 1]$ (referred to as weights on the edges), the greedy $(r-1)$-matching corresponding to $w$ is denoted by $M_w$. That is, we consider edges in increasing order of weights, and add an edge to the matching iff it shares at most $(r-2)$ vertices with any edge already in the matching. When $w$ is uniform from $[0, 1]^{|G|}$, we denote by $M^*$ the random greedy $(r-1)$-matching. Note that we may assume that weights are distinct, since this happens with probability 1 when $w$ is chosen uniformly from $[0, 1]^{|G|}$ .

We will consider the random greedy $(r-1)$-matching on $H$. Given an $(r-1)$-set $\rho \in \binom{V}{r-1}$, we would like to bound $P(\rho \in D(M^*))$, where the probability is over choices of $H$ and $w$. Since $d_{M^*}(\sigma) \leq 1$ for each $\sigma \in D(M^*)$, we obtain \[ \E|M^*| = \frac{1}{r} P(\rho \in D(M^*)) {\binom{n}{r-1}}, \]
which, by~\eqref{eq:azuma1}, implies that w.h.p. \[ \nu > |M^*| = (1 + o(1)) \frac{1}{r} P(\rho \in D(M^*)) \binom{n}{r-1}. \]
Thus, it suffices to show that 
\begin{equation}
\label{eq:matchingmain}
P(\rho \in D(M^*)) = 1 - \left(\frac{1}{(r-1)d + 1}\right)^{1/(r-1)}.
\end{equation}
\begin{remark}
\label{rmk:conditionalprob}
The proof of \eqref{eq:matchingmain} is an immediate generalization the proof of \cite[equation (11)]{kp20}. However, in \cite{kp20}, the existence of a triangle in a random 2-graph containing the pair of vertices $\{x,y\}$ depends on the existence of the edge $xy$, hence the corresponding equation bounds a conditional probability. For us, each $r$-set of vertices is an edge of the random $r$-graph $H$ independently of other $r$-sets, so the probability in \eqref{eq:matchingmain} is not a conditional probability.
\end{remark}

Before proving~\eqref{eq:matchingmain}, we need some notation and definitions. Note that the following proof is a straightforward generalization of the ideas in \cite{kp20}, and is based on the coupling from Section~\ref{sec:coupling}.

For a finite tree $T$, we work with the following recursive survival rule for $(r-1)$-sets, in which we evaluate $(r-1)$-sets in any order for which each $(r-1)$-set appears earlier than its base (further specification of the order doesn't affect the outcome), and ``dies'' means fails to survive:
\begin{equation}
\mbox{An $(r-1)$-set  dies iff it is the base of an edge all of whose other $(r - 1)$-sets survive.}
\end{equation}
For example, any $(r-1)$-set that is the base of no edges survives.

For an $r$-graph $G$, $\rho \in D(G)$ and a weight function $w$ uniform in $[0, 1]^{|G|}$, let \[ W(\rho) = \{\sigma \in D(G): \mbox{there is a path from $\rho$ to $\sigma$ on which the weights decrease}\}. \]
Note that $W(\rho)$ is a random subgraph of $S(\rho)$. The main point here is that if $W(\rho)$ is a finite tree, then $\rho$ is in $D(M^*)$ iff it dies when we apply the survival rule to $W(\rho)$. To see this, observe that an $(r-1)$-set $\sigma$ dies iff it is the base of an edge that is in $M$.

When $H = T^d$ with root $\rho$, we write $W^d$ for $W(\rho)$. When talking about $W^d$, the probabilities are over choices of $H$ and $w$. We have
\begin{prop}
\label{prop:matching-finite}
$W^d$ is finite with probability $1$.
\end{prop}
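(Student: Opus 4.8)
The plan is to prove the stronger quantitative statement $\E|W^d|<\infty$, from which almost-sure finiteness follows by Markov's inequality. The point to exploit is the tension between two competing rates: although $T^d$ is infinite with positive probability once $d>1/(r-1)$ (Proposition~\ref{prop:gwtree}), so that the ``subgraph of a finite tree'' reasoning is unavailable, the requirement that a path of $k$ edges carry \emph{strictly decreasing} weights holds with probability only $1/k!$, and this factorial overwhelms the exponential growth rate of the branching process for every fixed $d$.

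First I would pass from counting $(r-1)$-sets in $W^d$ to counting edges of $T^d$. Every $\sigma\in D(T^d)$ with $\sigma\neq\rho$ is a child $(r-1)$-set of a unique edge $e(\sigma)$, and since $T^d$ is a tree the path from $\rho$ to $\sigma$ is obtained from the path from $\rho$ to $e(\sigma)$ by appending $\sigma$, which contributes no new edge-weight; hence $\sigma\in W^d$ exactly when the weights strictly decrease along the edge-path from $\rho$ to $e(\sigma)$. Letting $\mathcal W$ be the set of edges $e$ of $T^d$ with this decreasing-path property, and using that each edge has precisely $r-1$ child $(r-1)$-sets while distinct edges have disjoint families of children, I obtain $|W^d| = 1 + (r-1)\,|\mathcal W|$, so it suffices to show $\E|\mathcal W|<\infty$.

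Next I would estimate $\E|\mathcal W|$ depth by depth. By Proposition~\ref{prop:gwtree}(2) the expected number of depth-$k$ edges of $T^d$ equals $((r-1)d)^k/(r-1)$. Conditioning on the realization of $T^d$, a fixed depth-$k$ edge has exactly $k$ edges on its path to $\rho$, and since the edge-weights are i.i.d.\ uniform---hence exchangeable and independent of the tree's shape---the conditional probability that they occur in strictly decreasing order along that path is exactly $1/k!$. Taking expectations over $T^d$ and summing gives
$$\E|\mathcal W| \;=\; \sum_{k\geq 1}\frac{((r-1)d)^k}{(r-1)\,k!} \;=\; \frac{e^{(r-1)d}-1}{r-1}\;<\;\infty ,$$
so $|\mathcal W|<\infty$ almost surely, and therefore $W^d$ is finite with probability $1$.

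I expect the main (indeed only) thing needing care to be the independence invoked in this last step: presence of a potential edge in $T^d$ is determined by the Galton--Watson offspring data, whereas the relative order of the weights along a fixed path is determined by the independent weight family, so conditioning on $T^d$ preserves the $1/k!$ factor and it multiplies the expected level size cleanly. I would make this precise exactly via the ``condition on $T^d$, then take expectations'' step above, which also avoids having to set up an explicit labelling of the potential tree; the remaining bookkeeping (the identity $|W^d| = 1 + (r-1)|\mathcal W|$ and the geometric-series estimate) is routine.
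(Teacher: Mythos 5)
Your argument is correct and is essentially the paper's: the paper likewise combines Proposition~\ref{prop:gwtree}(2) with the $1/i!$ factor coming from the decreasing-weights requirement, obtaining that the expected number of elements of $W^d$ at depth $i$ is $((r-1)d)^i/((r-1)\,i!)$ and concluding from its decay to $0$. Summing this series to get $\E|W^d|<\infty$, as you do, is a trivially equivalent (slightly stronger) packaging of the same idea.
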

\begin{proof}
By Proposition~\ref{prop:gwtree} (part 2), the expected number of $(r-1)$-sets at depth $i$ is $((r-1)d)^i/((r-1)i!)$. As $i \rightarrow \infty$ this goes to 0, implying the assertion.
\end{proof}
By Corollary~\ref{cor:coupling} and Proposition~\ref{prop:matching-finite}, for $\rho \in \binom{V}{r-1}$, we may couple $W(\rho)$ and $W^d$ to agree w.h.p. It follows that $P(\rho \in D(M^*))$ tends to the probability that the root dies in $W^d$. Hence, to complete the proof, it suffices to show the following.
\begin{prop}\label{survival}
\label{prop:matching-prob}
Under the survival rule, the root of $W^d$ survives with probability \[ ((r-1)d + 1)^{-1/(r-1)}. \]
\end{prop}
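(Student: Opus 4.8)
The plan is to exploit the self-similar branching structure of $W^d$ together with the fact (Proposition~\ref{prop:matching-finite}) that $W^d$ is almost surely finite, so the survival rule terminates and the survival probability of any $(r-1)$-set is well defined. Call the weight of the edge immediately above an $(r-1)$-set $\sigma \in D(W^d)\setminus\{\rho\}$ its \emph{threshold}. If $\sigma$ has threshold $t$, then the edges of $W^d$ with base $\sigma$ are exactly the edges of $T^d$ with base $\sigma$ of weight less than $t$; since $\sigma$ gives birth to $\Poisson(d)$ edges in $T^d$ with i.i.d.\ weights uniform on $[0,1]$, Poisson thinning shows the number of these lying in $W^d$ is $\Poisson(dt)$, with weights i.i.d.\ uniform on $[0,t]$, and each such edge $e$ has $r-1$ child $(r-1)$-sets, each the root of an independent copy of the same structure with threshold $w(e)$. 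Hence, writing $q(t)$ for the probability that an $(r-1)$-set of threshold $t$ survives, $q$ is a well-defined function $[0,1]\to[0,1]$.

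Next I would do a first-generation analysis. By the survival rule, $\sigma$ (threshold $t$) survives iff every edge of $W^d$ with base $\sigma$ has at least one child that dies. Conditioned on the weight $s$ of such an edge $e$, its $r-1$ children are independent and each survives with probability $q(s)$, so all survive with probability $q(s)^{r-1}$; thus $e$ fails to kill $\sigma$ with probability $1-q(s)^{r-1}$. Using the identity $\E\prod_{i\le N} g(S_i) = \exp(-\lambda(1-\E g(S_1)))$ for $N\sim\Poisson(\lambda)$ and i.i.d.\ $S_i$ independent of $N$, with $\lambda=dt$ and $S_i$ uniform on $[0,t]$, this yields
\[
q(t)=\exp\!\left(-d\int_0^t q(s)^{r-1}\,ds\right).
\]
The same computation at the root (where all $\Poisson(d)$ edges of $T^d$ below $\rho$ lie in $W^d$ and carry weights i.i.d.\ uniform on $[0,1]$) shows the root survives with probability $\exp(-d\int_0^1 q(s)^{r-1}\,ds)=q(1)$.

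It then remains to solve for $q$. The integral equation above shows $q$ is $C^1$ with $q(0)=1$; putting $Q(t)=\int_0^t q(s)^{r-1}\,ds$ gives the separable ODE $Q'(t)=e^{-(r-1)dQ(t)}$ with $Q(0)=0$, whose solution is $Q(t)=\frac{1}{(r-1)d}\ln\!\big((r-1)dt+1\big)$. Therefore $q(t)=e^{-dQ(t)}=\big((r-1)dt+1\big)^{-1/(r-1)}$, and in particular the root survives with probability $q(1)=\big((r-1)d+1\big)^{-1/(r-1)}$, as claimed.

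The step I expect to be the main obstacle is the rigorous justification of the functional equation: one must verify that the survival probability of an $(r-1)$-set in $W^d$ depends only on its threshold and not on its depth or on the rest of the tree, that the order in which the survival rule is applied is immaterial, and that the survival events of the $r-1$ children of an edge are genuinely independent given that edge's weight. All of this rests on the branching independence built into $T^d$ and on the a.s.\ finiteness from Proposition~\ref{prop:matching-finite} (which guarantees the recursion bottoms out at leaves); once these points are settled, the recognition of the Poisson-thinning self-similarity and the short ODE computation finish the proof.
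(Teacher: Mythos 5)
Your proposal is correct and follows essentially the same route as the paper: define the survival probability as a function of the weight/threshold of the $(r-1)$-set, use the Poisson branching structure (you via thinning to $\Poisson(dt)$ on $[0,t]$, the paper via $\Poisson(d)$ edges of which only those with smaller weight matter) to derive the integral equation $q(t)=\exp\bigl(-d\int_0^t q(s)^{r-1}\,ds\bigr)$, and solve the resulting ODE $Q'=e^{-(r-1)dQ}$ to get $q(t)=((r-1)dt+1)^{-1/(r-1)}$, evaluated at the root's weight $1$. The paper's proof is exactly this computation (with $w(\rho)=1$ and $f,F$ in place of your $q,Q$), and it treats the recursive well-definedness and independence points you flag with the same level of informality.
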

\begin{proof}
It will be convenient to extend $w$ to $(r-1)$-sets: set $w(\rho) = 1$, and for any other $(r-1)$-set $\sigma \in D(W^d)$, let $w(\sigma)$ be the
weight of the (unique) edge containing $\sigma$ of minimum depth.

Let $f(x)$ be the probability that an $(r-1)$-set of weight $x$ survives. Trivially $f(0) = 1$.  The probability that an edge with weight $x$ survives is
\begin{align*}
    f(x) = \sum_{k} \frac{e^{-d}d^k}{k!}\left[1 - \int_0^{x} f^{r-1}(y)\,dy\right]^k.
\end{align*}
To see this note that for a given child edge of a $(r-1)$-set $\sigma$, the probability that at least one other $(r-1)$-set dies  is $1 - \int_0^x f^{r-1}(y)dy$. Given that there are $k$ edges on  $\sigma$, the probability that for all of them at least one $(r-1)$-set dies is $\left[1 - \int_0^x f^{r-1}(y)\,dy\right]^k$. Let $F(x) = \int_0^{x} f^{r-1}(y)\,dy$, so $F(0) = 0$ and
\begin{align*}
F'(x) = f^{r-1}(x) & = \left(\sum_{k} \frac{e^{-d}d^k}{k!}\left[1 - \int_0^{x} f^{r-1}(y)\,dy\right]^k\right)^{r-1}\\
& = \left(e^{-d}\left[\sum_{k} \frac{d^k}{k!}\left(1 - F(x)\right)^k\right]\right)^{r-1} \\
& = e^{-(r-1)dF(x)}.
\end{align*}
Solving this gives \[F(x) = \frac{1}{(r-1)d}\ln\left((r-1)dx + 1\right). \]
Now, we have \[ F'(x) = f^{r-1}(x) =  \frac{1}{(r-1)dx + 1}, \] which implies
\[ f(x) = ((r-1)dx + 1)^{-1/(r-1)}. \]
\end{proof}

\section{Proofs of Theorems~\ref{thm:cover-absolute}, \ref{thm:cover-medium-p-small-r}, and \ref{thm:cover-medium-p-large-r}}
\label{sec:covers}

In each of the following subsections, we show the existence of an $(r-1)$-cover for an arbitrary $r$-graph $G$, which gives the bounds in Theorem~\ref{thm:cover-absolute}. These covers are based on constructions for the hypergraph Tur\'an problem  (which give $(r-1)$-covers for the complete $r$-graph $K_n^r$).
We then proceed to show that, for the random hypergraph $H$ with $d = \Theta(1)$, these covers can be improved upon, resulting in the bounds in Theorems~\ref{thm:cover-medium-p-small-r} and \ref{thm:cover-medium-p-large-r}.

For $l \in \NN$, by a \emph{uniformly random partition} of $V(H)$ into $l$ blocks, we mean the partition $(V_0, V_1 , \dots, V_{l-1})$  where every $v \in V(H)$ is placed into each of $V_0, \dots, V_{l-1}$ with probability $1/l$, independent of other vertices. We say that an $m$-set $A$ of $[n]$ has {\it ordered type} $\overline{a}=a_0a_1\cdots a_{l-1}$ (with respect to this partition) if $|A\cap V_i|=a_i$ for each $i$. So $(a_0,\ldots,a_{l-1})$ forms a weak composition of $m$ into $l$ parts. $A$ has \emph{unordered type} $\overline{a} = a_0 \geq a_1 \geq \cdots \geq a_{l-1}$ if the ordered type of $A$ is a permutation of $a_0a_1\cdots a_{l-1}$.

\subsection{Covers for 3-graphs}
\label{sec:cover3}

The following is the standard construction showing that every $2$-graph $G$ has a bipartite (and, hence, triangle-free) subgraph with at least half the edges. 
Let $G$ be $3$-graph, and $(V_0, V_1)$ be a uniformly random partition of $V(G)$. Set
\[ C_0 = \{\sigma \in \binom{V}{2}: \sigma \mbox{ has unordered type 20}\}. \]
Let $C = C_0 \cap D(G)$, and notice that $C$ is a 2-cover of $G$. Note that $P(\sigma \in C_0) = 1/2$, so $\E|C| = |D(G)|/2$. Hence, by the first moment method, we have
\[\tau(G) \leq \frac{1}{2}|D(G)|. \]

{\bf Improved bounds for $d = \Theta(1)$: } We now improve the cover $C_0$ for $H$ with $d = \Theta(1)$ by removing some additional elements (this construction appeared in \cite{kp20}).
Let
$$
C_1 = \{\sigma \in C_0 : \mbox{all edges containing $\sigma$ are contained in the same block as $\sigma$}\}.
$$
Note that $C_0 \setminus C_1$ may not be a $2$-cover of $H$; indeed, if $e \in H$ has unordered type $30$, and has the property that all three pairs in $\binom{e}{2}$ are in $C_1$, then $e$ is not covered by $C_0\setminus C_1$. However, setting
$$
C_2 = \{ \sigma \in C_1 : \mbox{there is an edge $e \supset \sigma$ such that for every $\sigma' \in \binom{e}{2}$ we have $\sigma' \in C_1$} \}
$$
we obtain 
$$
C = C(V_0, V_1) := (C_0 \setminus C_1) \cup C_2
$$
is again a $2$-cover of $H$.

We will show that, for each~$\sigma \in \binom{V}{2}$, 
\begin{equation}
\label{eq:r3probability}
P(\sigma \in C) \rightarrow \frac{1}{2} \left[1 - \exp\left(-\frac{d}{2}\left(1 + e^{-d}\right)\right) \right] = \beta_3(d).
\end{equation}
The proof of \eqref{eq:r3probability} is an immediate generalization the proof of \cite[equation (15)]{kp20}. For the same reason as in Remark~\ref{rmk:conditionalprob}, the probability in \cite[equation (15)]{kp20} needs to be conditional, whereas the probability in \eqref{eq:r3probability} is unconditional.

From this it follows that $\E|C| = \binom{n}{2}\beta_3(d)$, which, by \eqref{eq:azuma1}, implies the asserted bound.

The number of edges containing $\sigma$ has distribution $\Bin(n-2, p) \rightarrow \Poisson(d)$ implying
\begin{equation}
\label{eq:r3c1}
P(\sigma \in C_1) \sim \frac{1}{2} \sum_{k \geq 0} \frac{e^{-d}{d^k}}{k!}\frac{1}{2^k} = \frac{1}{2} e^{-d/2}.
\end{equation}
To estimate $P(\sigma \in C_2)$, we rely on the coupling from Section~\ref{sec:coupling} (with $\gamma = 2$). Given an edge $e \in V_0$ containing $\sigma \in C_1$, the probability that all edges with base in $\binom{e}{2} \setminus \{ \sigma \}$ are contained in $V_0$ is
\[ \sum_{l, m \geq 0} \frac{e^{-d} d^l}{l!} \frac{e^{-d} d^m}{m!} \frac{1}{2^m}\frac{1}{2^l}
= \left(\sum_{l\geq 0} \frac{e^{-d} d^l}{l!} \frac{1}{2^l}\right)^2
= e^{-d}. \]
It follows that
\begin{equation}
\label{eq:r3c2}
P(\sigma \in C_2) \sim \frac{1}{2} \sum_{k \geq 0} \frac{e^{-d}d^k}{k!}\frac{1}{2^k} \left(1 - \left(1 - e^{-d}\right)^k\right) = \frac{1}{2} \left[ e^{-d/2} - \exp\left(-\frac{d}{2}\left(1 + e^{-d}\right)\right)\right].
\end{equation}
Finally, combining \eqref{eq:r3c1} and \eqref{eq:r3c2} gives \eqref{eq:r3probability}.

\subsection{Covers for 4-graphs}
\label{sec:cover4}

The following is based on a construction of Tur\'an~\cite{turan41}.
Let $G$ be $4$-graph, and $(V_0, V_1, V_2)$ a uniformly random partition of $V(G)$. Set
\begin{itemize}
    \item $C_0^1 = \left\{ \sigma \in \binom{V}{3}: \sigma \mbox{ has unordered type 300} \right\}$,
    \item $C_0^2 = \left\{ \sigma \in \binom{V}{3}: \sigma \mbox{ has ordered type in } \{210, 021, 102\} \right\}$.
\end{itemize}
Note that $C = C(V_0, V_1, V_2) = (C_0^1 \cup C_0^2) \cap D(G)$ is a 3-cover of $G$. We have,  $P(\sigma \in C_0^1) = 1/9$ and $P(\sigma \in C_0^2) = 1/3$, giving
$\E|C| = 4|D(G)|/9$. The first moment method then gives
\[\tau(G) \leq \frac{4}{9}|D(G)|. \]

{\bf Improved bounds for $d = \Theta(1)$: } We now improve the cover $C$ for $H$ with $d = \Theta(1)$.
Let $C = C(V_0, V_1, V_2) = \left( (C_0^1 \setminus C_1^1) \cup C_2^1 \right) \bigcup \left(C_0^2 \setminus C_1^2\right)$, where
\begin{itemize}
    \item $C_1^1 = \{ \sigma \in C_0^1 : \mbox{all edges containing $\sigma$ are contained in the same block as $\sigma$}\}$,
    \item $C_2^1 = \{ \sigma \in C_1^1 : \exists~ \mbox{edge $e \supset \sigma$ such that $\forall$ $\sigma' \in \binom{e}{3}$, we have  $\sigma' \in C_1^1$}\}$,
    \item $C_1^2 = \{ \sigma \in C_0^2 : \mbox{all edges containing $\sigma$ have unordered type 310} \}$.
\end{itemize}

It is easy to verify that this is indeed a $3$-cover for $H$. Indeed, edges of unordered type $400$ and $310$ are covered by $(r-1)$-sets in $ \left( (C_0^1 \setminus C_1^1) \cup C_2^1 \right)$;  edges of unordered type $220$ and $211$ are covered by $(r-1)$-sets in $\left(C_0^2 \setminus C_1^2\right)$. The asserted bound follows, by \eqref{eq:azuma1}, from the following: For each $\sigma \in \binom{V}{3}$
\begin{equation}
\label{eq:r4probability}
P(\sigma \in C) \lesssim \frac{4}{9} \left[1 - \exp\left(-\frac{d}{3}\left(2 + e^{-2d}\right)\right)\right] = \beta_4(d).
\end{equation}

The number of edges containing $\sigma$ has distribution $\Bin(n-3, p) \rightarrow \Poisson(d)$ implying
\begin{equation}
\label{eq:r4c11}
P(\sigma \in C_1^1) \sim \frac{1}{9} \sum_{k \geq 0} \frac{e^{-d}{d^k}}{k!}\frac{1}{3^k} = \frac{1}{9} e^{-2d/3}.
\end{equation}
To estimate $P(\sigma \in C_2^1)$, we rely on the coupling from Section~\ref{sec:coupling}. Given an edge $e \in V_0$ containing $\sigma \in C_1^1$, the probability that all edges with base in $\binom{e}{3} \setminus \sigma$ are contained in $V_0$ is
\[ \left(\sum_{l\geq 0} \frac{e^{-d} d^l}{l!} \frac{1}{3^l}\right)^3
= e^{-2d}. \]
It follows that
\begin{equation}
\label{eq:r4c21}
P(\sigma \in C_2^1) \sim \frac{1}{9} \sum_{k \geq 0} \frac{e^{-d}d^k}{k!}\frac{1}{3^k} \left(1 - \left(1 - e^{-2d}\right)^k\right) = \frac{1}{9} \left[ e^{-2d/3} - \exp\left(-\frac{d}{3}\left(2 + e^{-2d}\right)\right)\right].
\end{equation}
Combining \eqref{eq:r4c11} and \eqref{eq:r4c21} gives
\begin{equation}
\label{eq:r4c1}
P(\sigma \in \left( (C_0^1 \setminus C_1^1) \cup C_2^1 \right)) \sim \frac{1}{9}\left[1 - \exp\left(-\frac{d}{3}\left(2 + e^{-2d}\right)\right)\right]
\end{equation}

Similarly, we have $P(\sigma \in C_1^2) \sim e^{-2d/3}/3$  which immediately gives
\begin{equation}
\label{eq:r4c2}
P(\sigma \in \left(C_0^2 \setminus C_1^2\right)) \sim \frac{1}{3}\left[1 - e^{-2d/3}\right] \leq \frac{1}{3}\left[1 - \exp\left(-\frac{d}{3}\left(2 + e^{-2d}\right)\right)\right]
\end{equation}
Note that the second inequality is easy (and, strictly speaking, unnecessary) but helps to simplify the analysis.
Finally, combining \eqref{eq:r4c2} and \eqref{eq:r4c1} gives  \eqref{eq:r4probability}, implying the assertion.

\subsection{Covers for 5-graphs}
\label{sec:cover5}

We adapt a construction of Giraud~\cite{giraud1990}. Let $G$ be a $5$-graph, $(V_0, V_1)$ be a uniformly random partition of $V(G)$, and let $f : V_0 \times V_1 \rightarrow \{0, 1\}$ be a uniformly random function. That is, for each $(x, y) \in V_0 \times V_1$, $f(x, y)$ is $0$ or $1$ with probability of $1/2$ independent of other inputs.

For $A_0 \subseteq V_0$ and $A_1 \subseteq V_1$, we set $f(A_0, A_1) = \sum_{(x, y)\in A_0 \times A_1} f(x, y)$. For a set $A \subseteq V$, we let $f(A) = f(A \cap V_0, A \cap V_1)$. Given $\sigma \in \binom{V(H)}{4}$ of type 22 (with respect to $V_0, V_1$), we say $\sigma$ is \emph{even} if $f(\sigma)$ is even. Note that
\[ \mbox{if $\sigma \cap V_0 = \{x, y\}$, then $\sigma$ is even iff $f(x, \sigma \cap V_1) \equiv f(y, \sigma \cap V_1) \pmod 2$}, \]
and
\[ \mbox{if $\sigma \cap V_1 = \{u, v\}$, then $\sigma$ is even iff $f(\sigma \cap V_0, u) \equiv f(\sigma \cap V_0, v) \pmod 2$}. \]

Let $C = C(V_0, V_1) = (C_0^1 \cup C_0^2) \cap D(G)$, where
\begin{itemize}
    \item $C_0^1 = \left\{ \sigma \in \binom{V}{4}: \sigma \mbox{ has unordered type 40} \right\}$,
    \item $C_0^2 = \left\{ \sigma \in \binom{V}{4}: \mbox{ $\sigma$ has type 22 and $\sigma$ is even} \right\}$.
\end{itemize}

We claim that $C$ is a $4$-cover of $G$. Indeed, any edge in $G$ that has at least four vertices in the same block is evidently covered by something in $C$. For an edge $e= \{ v_1, v_2, v_3, v_4, v_5\}$ of $G$ that is not covered by this case, assume (without loss of generality) that $v_1$ and $v_2$ are in $V_0$ and $v_3, v_4$ and $v_5$ are in $V_1$. Now, at least two of $f(\{v_1, v_2\}, v_3)$, $f(\{v_1, v_2\}, v_4)$ and $f(\{v_1, v_2\}, v_5)$ must have the same parity, and, hence, the corresponding 4-set must be in $C$.

We now compute the expected size of $C$. For $\sigma \in \binom{V}{4}$, note that $P(\sigma \in C_0^1) = 1/8$. Note that $P(\sigma\mbox{ has type } 22) = 3/8$ and $P(\sigma\mbox{ is even}) = 1/2$, implying $P(\sigma \in C_0^2) = 3/16$.
It follows that $\E|C| = 5|D(G)|/16$. The asserted bound follows from the first-moment method.

{\bf Improved bounds for $d = \Theta(1)$: }
We now improve the cover $C_0$ for $H$ with $d = \Theta(1)$.

Suppose $\sigma$ is even, and let $e$ be an edge of the form $e = \sigma \cup \{v\}$. If $v \in V_0$, we say $e$ is \emph{compatible} with $\sigma$ if $f(v, \sigma \cap V_1)$ has the same parity as $f(x, \sigma \cap V_1)$, for any $x \in \sigma \cap V_0$. Similarly, if $v \in V_1$, we say $e$ is \emph{compatible} with $\sigma$ if $f(\sigma \cap V_0, v)$ has the same parity as $f(\sigma \cap V_0, x)$, for any $x \in \sigma \cap V_1$. The motivation behind this definition is that if $e$ is compatible with $\sigma$, then it is covered by multiple 4-sets, which we exploit in the following construction.

Let $C = C(V_0, V_1) = \left( (C_0^1 \setminus C_1^1) \cup C_2^1 \right) \bigcup \left( (C_0^2 \setminus C_1^2) \cup C_2^2 \right)$, where
\begin{itemize}
    \item $C_1^1 = \{ \sigma \in C_0^1 : \mbox{all edges containing $\sigma$ are contained in the same block as $\sigma$} \}$,
    \item $C_2^1 = \{ \sigma \in C_1^1 : \exists~ \mbox{edge $e \supset \sigma$ such that $\forall$ $\sigma' \in \binom{e}{4}$, we have  $\sigma' \in C_1^1$}\}$,
    \item $C_1^2 = \{ \sigma \in C_0^2 : \mbox{every edge containing  $\sigma$ is compatible with $\sigma$} \}$,
    \item $C_2^2 = \{ \sigma \in C_1^2 : \exists~ \mbox{edge $e \supset \sigma$ such that $\forall$ $\sigma' \in \binom{e}{4}$ of type 22, we have  $\sigma' \in C_1^2$}\}$.
\end{itemize}
An argument similar to those in the preceding sections suffices to show that $C$ is a $4$-cover. The asserted bound follows from the fact that, for each $\sigma \in \binom{V}{4}$, 
\begin{equation}
\label{eq:r5probability}
P(\sigma \in C) \lesssim \frac{5}{16} \left[1 - \exp\left(-\frac{d}{2}\left(1 + e^{-d}\right)\right) \right],
\end{equation}
where the probability is over choices of the partition $(V_0, V_1)$ and the function $f$.

Note first that $P(\sigma \in C_0^1) = 1/8$. The number of edges containing $\sigma$ has distribution $\Bin(n-4, p) \rightarrow \Poisson(d)$ implying
\begin{equation}
\label{eq:r5c11}
P(\sigma \in C_1^1) \sim \frac{1}{8} \sum_{k \geq 0} \frac{e^{-d}{d^k}}{k!}\frac{1}{2^k} = \frac{1}{8} e^{-d/2}.
\end{equation}
To estimate $P(\sigma \in C_2^1)$, we rely on the coupling from Section~\ref{sec:coupling}. Given an edge $e \in V_0$ containing $\sigma \in C_1^1$, the probability that all edges with base in $\binom{e}{4} \setminus \sigma$ are contained in $V_0$ is
\[ \left(\sum_{l\geq 0} \frac{e^{-d} d^l}{l!} \frac{1}{2^l}\right)^4
= e^{-2d}. \]
It follows that
\begin{equation}
\label{eq:r5c21}
P(\sigma \in C_2^1) \sim \frac{1}{8} \sum_{k \geq 0} \frac{e^{-d}d^k}{k!}\frac{1}{2^k} \left(1 - \left(1 - e^{-2d}\right)^k\right) = \frac{1}{8} \left[ e^{-d/2} - \exp\left(-\frac{d}{2}\left(1 + e^{-2d}\right)\right)\right].
\end{equation}
Combining \eqref{eq:r5c11} and \eqref{eq:r5c21}, we obtain that 
\begin{align}
\label{eq:r5c1}
\nonumber P (\sigma \in \left( (C_0^1 \setminus C_1^1) \cup C_2^1 \right)) & = \frac{1}{8} \left[1 - \exp\left(-\frac{d}{2}\left(1 + e^{-2d}\right)\right) \right]\\
& \leq \frac{1}{8} \left[1 - \exp\left(-\frac{d}{2}\left(1 + e^{-d}\right)\right) \right]. 
\end{align}
The second inequality serves to simply the final bound, and, hence, the analysis.
Given $\sigma \in C_0^2$, the probability that an edge $e = \sigma \cup \{v\}$ is compatible with $\sigma$ is $1/2$. It follows that 
\begin{equation}
\label{eq:r5c12}
P(\sigma \in C_1^2) \sim \frac{3}{16} \sum_{k \geq 0} \frac{e^{-d}{d^k}}{k!}\frac{1}{2^k} = \frac{3}{16} e^{-d/2}.
\end{equation}

Let $e$ be an edge containing $\sigma \in C_1^2$. Then $e$ must be of type 32, and so there are two $4$-sets in  $\binom{e}{4} \setminus \{\sigma\}$ that are of type $22$. The probability that these two $4$-sets are also in $C_1^2$ is
\[ \sum_{l, m \geq 0} \frac{e^{-d} d^l}{l!} \frac{e^{-d} d^m}{m!} \frac{1}{2^m}\frac{1}{2^l}
= \left(\sum_{l\geq 0} \frac{e^{-d} d^l}{l!} \frac{1}{2^l}\right)^2
= e^{-d}. \]
It follows that
\begin{equation}
\label{eq:r5c22}
P(\sigma \in C_2^2) \sim \frac{3}{16} \sum_{k \geq 0} \frac{e^{-d}d^k}{k!}\frac{1}{2^k} \left(1 - \left(1 - e^{-d}\right)^k\right) = \frac{3}{16} \left[ e^{-d/2} - \exp\left(-\frac{d}{2}\left(1 + e^{-d}\right)\right)\right].
\end{equation}
Combining \eqref{eq:r5c12}  and \eqref{eq:r5c22}, we obtain that
\begin{equation}
\label{eq:r5c2}
P (\sigma \in \left( (C_0^2 \setminus C_1^2) \cup C_2^2 \right)) \sim \frac{3}{16} \left[1 - \exp\left(-\frac{d}{2}\left(1 + e^{-d}\right)\right) \right]. 
\end{equation}
Finally, \eqref{eq:r5c1} and \eqref{eq:r5c2} together give \eqref{eq:r5probability}, implying the assertion.

\subsection{Larger arity}

For $r \geq 6$, we use the following construction of Frankl and R{\"o}dl~\cite{fr85}, along with an improvement due to Sidorenko~\cite{sidorenko1997upper}.

\paragraph{The Frankl-R\"odl construction}
Let $G$ be an $r$-graph, $l$ be a positive integer, and $(V_0, \dots, V_{l-1})$ be a uniformly random partition of $V(G)$.
For $A \subseteq V(G)$, define 
\[ d(A) := \left|\left\{ i \in \{0, \dots, l-1\}: A \cap V_i = \emptyset \right\}\right|, \]
and 
\[ w(A) := \sum_{i = 0}^{l-1} i |A \cap V_i|.\]
For $0 \leq j \leq l-1$, let $\mathcal{C}_j$ be the family
\[ \mathcal{C}_j := \left\{ \sigma \in D(G): \left( w(\sigma) + j\right) \bmod l  \in \{0, \dots, d(\sigma)\} \right\}. \]
We claim that, for every $0 \leq j\leq l-1$, $\mathcal{C}_j$ is a cover of $G$.
To see this, note that, for every $e \in G$, there are $l - d(e)$ indices $i$ such that $e \cap V_i \neq \emptyset$, at least one of which must be in
\[ (w(e) + j) \bmod l,~(w(e) + j - 1) \bmod l,~\dots~,~(w(e) + j - d(e))  \bmod l. \] 
Fix $i$ to be such an index. Let $x \in e \cap V_i$, and $\sigma = e \setminus \{x\}$.  Now, since $w(\sigma) \equiv w(e) - i \pmod l$ and $d(\sigma) \geq d(e)$, we have $0 \leq (w(\sigma) + j) \bmod l \leq d(\sigma) $ implying $\sigma \in \mathcal{C}_j$.

\begin{lemma}
\label{lem:frcover}
For any $l \in \NN$ and any $r$-graph $G$ \[ \tau(G) \leq \left|D(G)\right|\left[\frac{1}{l} + \left(1 - \frac{1}{l}\right)^{r-1}\right] . \]
\end{lemma}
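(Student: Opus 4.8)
The plan is to average the sizes $|\mathcal{C}_j \cap D(G)|$ over $j \in \{0, \dots, l-1\}$ and find a value of $j$ for which the cover $\mathcal{C}_j$ is small; since each $\mathcal{C}_j$ is a cover of $G$ by the computation preceding the statement, this will produce a cover of the desired size. Concretely, for a fixed $\sigma \in D(G)$, I would compute $\sum_{j=0}^{l-1} \mathbf{1}[\sigma \in \mathcal{C}_j]$. Because the condition defining $\mathcal{C}_j$ is ``$(w(\sigma)+j) \bmod l \in \{0,1,\dots,d(\sigma)\}$'', as $j$ ranges over a complete residue system modulo $l$ the quantity $(w(\sigma)+j) \bmod l$ also ranges over all of $\{0,\dots,l-1\}$, so $\sigma$ lies in exactly $d(\sigma)+1$ of the families $\mathcal{C}_0, \dots, \mathcal{C}_{l-1}$. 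Hence $\sum_{j=0}^{l-1} |\mathcal{C}_j \cap D(G)| = \sum_{\sigma \in D(G)} (d(\sigma)+1)$, and by averaging there is a $j$ with $|\mathcal{C}_j \cap D(G)| \le \frac{1}{l}\sum_{\sigma \in D(G)}(d(\sigma)+1)$.

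Next I would take expectations over the random partition $(V_0,\dots,V_{l-1})$. By linearity it suffices to understand $\E[d(\sigma)]$ for a single $(r-1)$-set $\sigma$. Writing $d(\sigma) = \sum_{i=0}^{l-1} \mathbf{1}[\sigma \cap V_i = \emptyset]$, each indicator has expectation $(1-1/l)^{r-1}$ since each of the $r-1$ elements of $\sigma$ independently avoids $V_i$ with probability $1-1/l$; thus $\E[d(\sigma)] = l(1-1/l)^{r-1}$. Therefore
\[ \E\Big[\tfrac{1}{l}\sum_{\sigma \in D(G)}(d(\sigma)+1)\Big] = \tfrac{1}{l}|D(G)|\big(1 + l(1-1/l)^{r-1}\big) = |D(G)|\Big[\tfrac{1}{l} + \big(1-\tfrac1l\big)^{r-1}\Big]. \]
Since $\tau(G) \le \min_j |\mathcal{C}_j \cap D(G)| \le \frac1l\sum_\sigma(d(\sigma)+1)$ pointwise (for every partition), and the right-hand side has expectation equal to the claimed bound, the first moment method (i.e., there exists a partition achieving at most the mean) gives a partition, and a corresponding $j$, for which $\tau(G) \le |D(G)|[\frac1l + (1-\frac1l)^{r-1}]$, as required.

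There is essentially no hard step here: the argument is a clean two-stage averaging (first over $j$ for fixed partition, then over partitions), and the only things to be careful about are (a) confirming that each $\mathcal{C}_j$ genuinely covers $G$ — but this is exactly the paragraph immediately preceding the lemma, so it may be cited — and (b) getting the count $d(\sigma)+1$ right, which hinges on noting that the residues $\{0,\dots,d(\sigma)\}$ form a block of $d(\sigma)+1$ consecutive values and that $j \mapsto (w(\sigma)+j)\bmod l$ is a bijection on $\ZZ/l\ZZ$. If one wants to avoid invoking ``there exists a partition below the mean'' and instead phrase everything in expectation, one can equally well say: $\E[\min_j |\mathcal{C}_j \cap D(G)|] \le \E[\frac1l \sum_j |\mathcal{C}_j \cap D(G)|]$, which equals the stated bound, so some outcome of the random partition yields a cover of at most that size. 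Either phrasing completes the proof.
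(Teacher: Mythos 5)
Your proposal is correct and follows essentially the same argument as the paper: each $\sigma \in D(G)$ lies in exactly $d(\sigma)+1$ of the families $\mathcal{C}_0,\dots,\mathcal{C}_{l-1}$, the expected value of $\sum_j |\mathcal{C}_j|$ is $\left[1+l\left(1-\frac{1}{l}\right)^{r-1}\right]|D(G)|$, and the bound follows by the first moment method together with the pigeonhole principle. Your reordering (pigeonhole over $j$ pointwise before taking the expectation over partitions) is an immaterial variation of the same two-stage averaging.
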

\begin{proof}
The assertion follows by the first moment method, along with the pigeonhole principle from \begin{equation}
\label{eq:frbound}
\E\sum_{j = 0}^{l-1}|\mathcal{C}_j| = \left[1 + l\left(1 - \frac{1}{l}\right)^{r-1}\right] \left|D(G)\right|.    
\end{equation} 

To see \eqref{eq:frbound}, note that each $\sigma \in D(G)$ belongs to exactly $d(\sigma) + 1$ of the covers $\mathcal{C}_0, \dots, \mathcal{C}_{l-1}$, giving
\[ \sum_{j = 0}^{l-1} |\mathcal{C}_j| = \sum_{\sigma \in D(G)} (d(\sigma) + 1) = |D(G)| + \sum_{i = 0}^{l-1}|\mathcal{A}_i|,\]
where $\mathcal{A}_i = \{ \sigma \in D(G) : e \cap V_i = \emptyset\}$ with
\[ \E|A_i| =  \left| D(G)\right| P(\{\sigma \cap V_i = \emptyset\}) =  \left|D(G)\right| \left(1 - \frac{1}{l}\right)^{r-1}. \]
\end{proof}

\paragraph{Sidorenko's Improvement}

Consider a function $f: V_0\times\dots\times V_{l-1} \rightarrow \{0, 1\}$. For a set $A \in \binom{V(G)}{2l}$ with $|A \cap V_i| = 2$ for every $i$, let
\begin{equation}
\label{eq:randomsum} q(A) =  \sum_{x_i \in A \cap V_i} f(x_0, \dots, x_{l-1}).
\end{equation}
Let $\mathcal{E} = \{ \sigma \in D(G): |\sigma \cap V_i| \geq 2 \mbox{ for every }i\}$, and let $\pi(\sigma)$ be the $2l$ element set obtained by taking the two maximal elements (with respect to an arbitrary linear order on $V$) from each of the sets $\sigma \cap V_i$, $0 \leq i \leq l-1$. For $0 \leq j \leq l-1$, let $\mathcal{C}'_j$ be the family
\[ \mathcal{C}'_j = \left\{ \sigma \in \mathcal{C}_j: \sigma \notin \mathcal{E} \mbox{ or } q(\pi(\sigma)) \mbox{ is even} \right\}. \]
We claim that, for any $j$, $\mathcal{C}'_j$ is a cover. Fix $0 \leq j \leq l-1$ and recall from the Frankl-R\"odl construction that, given an edge $e \in G$, there exists an index~$i$ such that $e \cap V_i \neq \emptyset$ and $\sigma = e \setminus \{ x \} \in \mathcal{C}_j$ for any $x \in e \cap V_i$.
If $e \setminus \{ x \} \notin \mathcal{E}$, then $e$ is covered by $\mathcal{C}'_j$. Otherwise, if $e \setminus \{ x \} \in \mathcal{E}$, then $|e \cap V_i| \geq 3$ and $|e \cap V_k| \geq 2$ for $k \neq i$.
Suppose $\{x, y, z\} \subseteq e \cap V_i$ are the three maximal elements, and consider the sum \[ Q := q(\pi(e \setminus \{ x \})) + q(\pi(e \setminus \{ y \})) + q(\pi(e \setminus \{ z \})). \] Observe that every term of the form $f(x_0, \dots, x_{l-1})$ appears twice in the sum implying $Q$ is even. It follows that at least one of $q(\pi(e \setminus \{x\})), q(\pi(e \setminus \{y\}))$ or $q(\pi(e \setminus \{z\}))$ is even and, hence, in $\mathcal{C}'_j$.

\begin{lemma}
\label{lem:sidorenkocover}
For any $l \leq r/2$ and any $r$-graph $G$ \[ \tau(G) \leq \frac{|D(G)|}{2}\left[ \frac{1}{l} + \left(3 + \frac{r-1}{l-1}\right)\left(1 - \frac{1}{l}\right)^{r-1}\right]. \]
\end{lemma}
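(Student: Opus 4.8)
The plan is to push the first-moment/pigeonhole argument behind Lemma~\ref{lem:frcover} through to the refined covers $\mathcal{C}'_0,\dots,\mathcal{C}'_{l-1}$, quantifying exactly how much $\mathcal{C}'_j$ saves over $\mathcal{C}_j$ on average. We may assume $l\ge 2$ (for $l=1$ one checks directly that the construction gives $\tau(G)\le|D(G)|/2$, which is the asserted bound read with the convention that the $(1-1/l)^{r-1}$ term vanishes). Since each $\mathcal{C}'_j$ is a cover of $G$ for every outcome of the random partition $(V_0,\dots,V_{l-1})$ and the random function $f$, we have $l\,\tau(G)\le\sum_{j=0}^{l-1}|\mathcal{C}'_j|$ pointwise, hence $l\,\tau(G)\le\E\sum_{j=0}^{l-1}|\mathcal{C}'_j|$, so it suffices to estimate this expectation.

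The input is $\E\sum_{j=0}^{l-1}|\mathcal{C}_j|=|D(G)|\bigl[1+l(1-1/l)^{r-1}\bigr]$ from the proof of Lemma~\ref{lem:frcover}. By definition $\mathcal{C}_j\setminus\mathcal{C}'_j=\{\sigma\in\mathcal{C}_j\cap\mathcal{E}:q(\pi(\sigma))\text{ odd}\}$, and the structural point is that any $\sigma\in\mathcal{E}$ meets every block, so $d(\sigma)=0$ and (as noted in the Frankl--R\"odl analysis) $\sigma$ lies in exactly $d(\sigma)+1=1$ of the $\mathcal{C}_j$. Hence the double sum collapses: $\sum_{j}|\mathcal{C}_j\setminus\mathcal{C}'_j|=\bigl|\{\sigma\in\mathcal{E}:q(\pi(\sigma))\text{ odd}\}\bigr|$. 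For a fixed partition and fixed $\sigma\in\mathcal{E}$, $q(\pi(\sigma))$ is a sum of $2^{l}$ bits $f(x_0,\dots,x_{l-1})$ indexed by the distinct tuples with $x_i$ ranging over the two chosen elements of $\sigma\cap V_i$; these bits are independent and uniform, so $q(\pi(\sigma))$ is odd with probability exactly $1/2$. Averaging over $f$ and then over the partition gives $\E\sum_{j}|\mathcal{C}_j\setminus\mathcal{C}'_j|=\tfrac12\,\E|\mathcal{E}|=\tfrac12\,\zeta_1\,|D(G)|$, where $\zeta_1=\zeta_1(r,l)=\P(\text{a uniform partition of an }(r-1)\text{-set into }l\text{ blocks puts}\ge 2\text{ in each block})$. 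Therefore $l\,\tau(G)\le|D(G)|\bigl[1+l(1-1/l)^{r-1}-\tfrac12\zeta_1\bigr]$.

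It remains to lower bound $\zeta_1$, i.e.\ to upper bound $\zeta_2=1-\zeta_1=\P(\text{some block gets}\le 1\text{ element})$. A union bound over the $l$ blocks gives $\zeta_2\le l\bigl[(1-1/l)^{r-1}+(r-1)\tfrac1l(1-1/l)^{r-2}\bigr]$, and the elementary identity $\tfrac{l}{l-1}(1-1/l)^{r-1}=(1-1/l)^{r-2}$ rewrites the right-hand side as $l(1-1/l)^{r-1}\bigl(1+\tfrac{r-1}{l-1}\bigr)$; this remains valid when $\mathcal{E}=\emptyset$ (so $\zeta_2=1$), since then the right-hand side is automatically at least $1$. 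Substituting $\zeta_1\ge 1-l(1-1/l)^{r-1}(1+\tfrac{r-1}{l-1})$ into the previous display, dividing by $l$, and simplifying using $\tfrac1l-\tfrac1{2l}=\tfrac1{2l}$ and $1+\tfrac12+\tfrac{r-1}{2(l-1)}=\tfrac12\bigl(3+\tfrac{r-1}{l-1}\bigr)$ yields precisely $\tau(G)\le\tfrac{|D(G)|}{2}\bigl[\tfrac1l+(3+\tfrac{r-1}{l-1})(1-1/l)^{r-1}\bigr]$.

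I expect no single hard step; this is a routine extension of Lemma~\ref{lem:frcover}. The points needing care are: confirming $d(\sigma)=0$ for $\sigma\in\mathcal{E}$ so the sum over $j$ collapses to an expression independent of $j$; checking that the $2^l$ bits entering $q(\pi(\sigma))$ are genuinely distinct — which is exactly where $\sigma\in\mathcal{E}$, hence $|\sigma\cap V_i|\ge 2$ (possible at all only because $l\le r/2$), is used — so that its parity is unbiased; and the short final simplification, in particular the identity relating $(1-1/l)^{r-1}$ to $(1-1/l)^{r-2}$ that makes the union-bound estimate match the claimed coefficient $3+\tfrac{r-1}{l-1}$.
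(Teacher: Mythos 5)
Your proof is correct and follows essentially the same route as the paper: the same refined covers $\mathcal{C}'_j$, the observation that each removed $\sigma\in\mathcal{E}$ (having $d(\sigma)=0$) lies in exactly one $\mathcal{C}_j$ so the expected saving is $\tfrac12\E|\mathcal{E}|$, the same union bound on the complement of $\mathcal{E}$, and the same first-moment/pigeonhole conclusion. Averaging jointly over the partition and $f$ instead of first fixing $f^*$, and phrasing the union bound via $\zeta_2$ rather than the sets $\mathcal{B}_i$, are only cosmetic differences.
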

\begin{proof}
Let $f: V_0\times\dots\times V_{l-1} \rightarrow \{0, 1\}$ be a random function whose entries are chosen uniformly and independently. For any $\sigma \in D(G)$,  $P(q(\pi(\sigma))\mbox{ is odd}) = 1/2$.  Hence,
\[\E|\{\sigma \in \mathcal{E}: q(\pi(\sigma))\mbox{ is odd}\}| = \frac{1}{2}|\mathcal{E}|.\]
By the first moment method, there exists a function $f^*$ such that
\begin{equation}
\label{eq:randomfunc}
    |\{\sigma \in \mathcal{E}: q(\pi(\sigma))\mbox{ is odd}\}| \geq |\mathcal{E}|/2.
\end{equation}
Choosing $f^*$ to be a function satisfying \eqref{eq:randomfunc}, we obtain
\begin{align}
\label{eq:sidosize1}
\nonumber \sum_{j = 0}^{l-1} |\mathcal{C}'_j|&\leq \sum_{j = 0}^{l-1} |\mathcal{C}_j| - \frac{1}{2}|\mathcal{E}| \\
\nonumber & = \left[1 + l\left(1 - \frac{1}{l}\right)^{r-1}\right] |D(G)| -  \frac{1}{2}|\mathcal{E}|\\
\nonumber & = \left[\frac{1}{2} + l\left(1 - \frac{1}{l}\right)^{r-1}\right] |D(G)| + \frac{1}{2}\left( |D(G)| - |\mathcal{E}|\right)\\
& \leq \left[\frac{1}{2} + l\left(1 - \frac{1}{l}\right)^{r-1}\right] |D(G)| + \frac{1}{2}\sum_{i = 0}^{l-1}|\mathcal{B}_i|
\end{align}
where $\mathcal{B}_i = \{\sigma \in D(G): |\sigma \cap V_i| \leq 1\}$, and 
\begin{align}
\label{eq:sidosize2}
\nonumber \E|\mathcal{B}_i| & = |D(G)| P(|\sigma \cap V_i| \leq 1)\\
& = |D(G)| \left[\left( 1 - \frac{1}{l}\right)^{r-1} + (r-1) \cdot \frac{1}{l} \left( 1 - \frac{1}{l} \right)^{r-2}\right].
\end{align}
Combining \eqref{eq:sidosize1} and \eqref{eq:sidosize2} gives
\[\E\sum_{j = 0}^{l-1} |\mathcal{C}'_j| =  \frac{|D(G)|}{2}\left[ 1 + l\left(3 + \frac{r-1}{l-1}\right)\left(1 - \frac{1}{l}\right)^{r-1}\right],\] which, by the first moment method and the pigeonhole principle, implies the assertion.
\end{proof}

\subsubsection{\texorpdfstring{Improved bounds for $d = \Theta(1)$}{Improved bounds for constant d}}
\label{sec:improvedsidorenko}
We now show that for $H$ with $d = \Theta(1)$, the cover described above can be improved. The ideas and proof are similar to the preceding sections ---  we obtain an improved bound on $\tau$ by removing additional $(r-1)$-sets from the cover constructed in the previous section --- so we aim to be brief.

For $\sigma \in \mathcal{E}$, we say $\sigma$ is {\it even} if $q(\pi(\sigma))$ is even. Clearly, for every $0 \leq i \leq l-1$, $\sigma$ is even if and only if, with $\pi(\sigma) \cap V_i = \{x, y\}$, we have
\begin{equation}
\sum_{\substack{{x_j \in \pi(\sigma) \cap V_j}\\{j \neq i}}} f(x_0, \dots, x, \dots, x_{l-1}) \equiv \sum_{\substack{{x_j \in \pi(\sigma) \cap V_j}\\{j \neq i}}} f(x_0, \dots, y, \dots, x_{l-1}) \pmod{2}.
\end{equation}

Suppose $\sigma \in \mathcal{E}$ is even, and let $e$ be an edge of the form $e = \sigma \cup \{v\}$ with $v \in V_i$. We say $e$ is \emph{compatible} with $\sigma$ if, for all $x \in \pi(\sigma) \cap V_i$,
\[ \sum_{\substack{{x_j \in \pi(\sigma) \cap V_j}\\{j \neq i}}} f(x_0, \dots, v, \dots, x_{l-1}) \equiv \sum_{\substack{{x_j \in \pi(\sigma) \cap V_j}\\{j \neq i}}} f(x_0, \dots, x, \dots, x_{l-1}) \pmod{2}. \]

Let $C_0$ be the set of $\sigma \in \mathcal{E}$ such that $\sigma$ is even, and $C_1$ be the set of $\sigma \in C_0$ such that every edge containing $\sigma$ is compatible with $\sigma$. 
As before, simply taking $C_0 \setminus C_1$ would leave some edges uncovered. Specifically, suppose $\sigma \in C_1$, $e = \sigma \cup \{v\}$ is an edge with $v \in V_i$, and $\pi(\sigma) \cap V_i = \{ x, y \}$. Then, if $e \setminus \{x\} = (\sigma \cup \{v\}) \setminus \{x\} $ and $e \setminus \{x\} = (\sigma \cup \{v\}) \setminus \{y\} $ are in $C_1$, it may be that $e$ is not covered. Let $C_2$ be the set of $\sigma \in C_1$ such that there exists an edge $e$ as in the preceding sentence containing $\sigma$.
Finally, set $C = (C_0 \setminus C_1) \cup C_2$.

Set $\mathcal{E}' = D(H) \setminus \mathcal{E}$. In words, $\mathcal{E}'$ is the set of $\sigma \in D(H)$ such that $|\sigma \cap V_i| \leq 1$ for some $0 \leq i \leq l-1$.
For each $0 \leq j \leq l-1$, let
\[ \mathcal{D}_j := \{\sigma \in \mathcal{E'} \cup C: \left( w(\sigma) + j\right) \bmod l  \in \{0, \dots, d(\sigma)\}\}. \] 
As before, we have
\begin{equation}
\label{eq:improvedsidorenko1}
\sum_{j = 0}^{l-1} |\mathcal{D}_j| = \sum_{\sigma \in \mathcal{E}' \cup C} (d(\sigma) + 1) = \sum_{\sigma \in \mathcal{E}'} (d(\sigma) + 1) + \sum_{\sigma\in C} (d(\sigma) + 1).
\end{equation}

We deal with each of the two terms on the RHS of \eqref{eq:improvedsidorenko1} separately. First note that
\begin{align*}
    \sum_{\sigma \in \mathcal{E'}} (1 + d(\sigma)) =  \sum_{\sigma \in \mathcal{E'}} 1 + \sum_{i = 0}^{l-1}  |\mathcal{A}_i|,
\end{align*}
where $\mathcal{A}_i = \{ \sigma \in D(G) : e \cap V_i = \emptyset\}$. We have
\[ \E|A_i| =  \left|D(G)\right| \left(1 - \frac{1}{l}\right)^{r-1} \sim  \binom{n}{r-1}(1 - e^{-d}) \left(1 - \frac{1}{l}\right)^{r-1}. \]
Recalling that $\zeta_2$ is the probability that, in a random partition of an $(r-1)$-set into $l$ blocks, there is a block with at most one element (i.e., $P(\sigma \in \mathcal{E}') = \zeta_2$), we obtain
\[ \sum_{\sigma \in \mathcal{E}'} 1 = \E |\mathcal{E}| = \E|D(G)| \zeta_2 \sim \binom{n}{r-1}(1-e^{-d}) \zeta_2. \]
It follows that
\begin{equation}
\label{eq:improvedsidorenko2}
    \E \sum_{\sigma \in \mathcal{E}'} (1 + d(\sigma)) \sim \binom{n}{r-1}  \left(1 - e^{-d}\right) \left[ l\left(1 - \frac{1}{l}\right)^{r-1} + \zeta_2 \right]
\end{equation}

On the other hand, for every $\sigma \in \mathcal{E}$, we have $d(\sigma) = 0$. Hence to bound the latter term on the RHS of \eqref{eq:improvedsidorenko1}, it suffices to bound the expected size of $C$. To bound the probability that $\sigma \in C$, we rely on the coupling from Section~\ref{sec:coupling} (with $\gamma = 2$). Recall that $\zeta_1$ is the probability that, in a uniform random partition of an $(r-1)$-set into $l$ blocks, there are at least two elements in each block (i.e., $P(\sigma \in \mathcal{E}) = \zeta_1$).
Now we have $P(\sigma \in C_0) = \zeta_1/2$ and 
\begin{equation}
\label{eq:largerc1}
    P(\sigma \in C_1) \sim \frac{\zeta_1}{2}\sum_{k \geq 0} \frac{e^{-d}{d^k}}{k!}\frac{1}{2^k} = \frac{\zeta_1}{2} e^{-d/2}.
\end{equation}
Given $\sigma \in C_1$ and an edge $e = \sigma \cup \{v\}$ with $v \in V_i$, suppose that $ \pi(\sigma) \cap V_i = \{x, y\}$. The probability that both $(\sigma \setminus \{x\}) \cup \{v\}$ and $(\sigma \setminus \{y\}) \cup \{v\}$, are in $C_1$ is $(e^{-d/2})^2 = e^{-d}$. It follows that
\begin{equation}
\label{eq:largerc2}
P(\sigma \in C_2) \sim \frac{\zeta_1}{2} \left[ e^{-d/2} - \exp\left(-\frac{d}{2}\left(1 + e^{-d}\right)\right)\right].
\end{equation}
Combining \eqref{eq:largerc1} and \eqref{eq:largerc2} gives
\begin{equation*}
P (\sigma \in C) \sim \frac{\zeta_1}{2} \left[1 - \exp\left(-\frac{d}{2}\left(1 + e^{-d}\right)\right) \right].
\end{equation*}
From the above discussion, we obtain
\begin{equation}
\label{eq:improvedsidorenko3}
    \E \sum_{\sigma \in C} (1 + d(\sigma))
    = \E |D(G)|P(\sigma \in C)
    \sim \binom{n}{r-1}\frac{\zeta_1}{2} \left[1 - \exp\left(-\frac{d}{2}\left(1 + e^{-d}\right)\right) \right]
\end{equation}
Finally, \eqref{eq:improvedsidorenko1}, \eqref{eq:improvedsidorenko2}, and \eqref{eq:improvedsidorenko3} imply
\begin{equation*}
    \E \sum_{j = 0}^{l-1} |\mathcal{D}_j| \sim \binom{n}{r-1}\left[  \frac{\zeta_1}{2} \left(1 - \exp\left(-\frac{d}{2}\left(1 + e^{-d}\right)\right) \right) +  \left( l\left(1 - \frac{1}{l}\right)^{r-1}\!\!\! + \zeta_2 \right) \left(1 - e^{-d}\right)  \right],
\end{equation*}
which, by with the first moment method and the pigeonhole principle, implies the assertion.

\bibliographystyle{abbrv}
\bibliography{tuza}

\appendix

\section{Proof of Theorem~\ref{thm:mainsmall}}
\label{app:smallr}

For $d = \omega(1)$, we have $|D(H)| \sim \binom{n}{r-1}$ w.h.p. With this, Theorems~\ref{thm:matching-large-p} and \ref{thm:cover-absolute} immediately imply that w.h.p.
\[\frac{\tau}{\nu} \leq \begin{cases} 
3/2 &\text{if }r = 3, \\
16/9 &\text{if }r = 4,\\
25/16 &\text{if }r = 5. \\
\end{cases} \]

For $d = \Theta(1)$, by Theorems~\ref{thm:matching-medium-p} and \ref{thm:cover-medium-p-small-r}, it suffices to show \begin{equation}
\frac{\beta_r(d)}{\alpha_r(d)/r} \leq \left\lceil\frac{r+1}{2} \right\rceil.
\end{equation}
For $r = 3$, we have
\[ \alpha_3(d) =  1 - \left(\frac{1}{2d + 1}\right)^{1/2}, \quad \mbox{ and} \quad \beta_3(d) = \frac{1}{2} \left[1 - \exp\left(-\frac{d}{2}\left(1 + e^{-d}\right)\right) \right]. \]
For $r = 4$, we have
\[ \alpha_4(d) =  1 - \left(\frac{1}{3d + 1}\right)^{1/3} , \quad \mbox{ and} \quad \beta_4(d) = \frac{4}{9} \left[1 - \exp\left(-\frac{d}{3}\left(2 + e^{-2d}\right)\right)\right]. \]
For $r = 5$, we have
\[ \alpha_5(d) =  1 - \left(\frac{1}{4d + 1}\right)^{1/4} , \quad \mbox{ and} \quad
\beta_5(d) = 
\frac{5}{16} \left[1 - \exp\left(-\frac{d}{2}\left(1 + e^{-d}\right)\right) \right]. \]

We refer to {\cite[Lemma~1.6]{kp20}} for a detailed sketch of a proof for $r = 3$. For $r = 4, 5$, the asserted bound is easy when $d \geq 5$ since $\alpha_4(d) \geq 4/27$ and $\alpha_5(d) > 5/48$ (we also have $\beta_4(d) \leq 4/9$ and $\beta_5(d) \leq 5/16$ for all $d$). It remains to deal with the intervals $d \in [1/3, 5]$ and $d \in [1/4, 5]$ for $r = 4$ and $r = 5$, respectively. For this, we simply note that a finite computation akin to {\cite[Lemma~1.6]{kp20}} suffices to verify the asserted bound.

We remark that an explicit optimization of the expressions using {\tt Mathematica} suggests that, for $r = 3, 4, 5$, the ratio $\tau/\nu \leq C_r r$ w.h.p., where $C_r < \ceil{\frac{r+1}{2}}$. We may take $C_3 = 1.976$, $C_4 = 2.883$, and $C_5 = 2.696$.

\section{Proof of Theorem~\ref{thm:mainlarge}}

\subsection{Preliminary bounds}
\label{app:largerbounds}

We first give some bounds that will make the analysis easier for larger values of $r$. Throughout this section, we restrict attention to $r \geq r_0$, for some fixed $r_0$. 

\subsubsection{\texorpdfstring{An effective bound on $\tau$}{An effective bound on tau}} \label{subsec-effective-tau}

From Theorem~\ref{thm:cover-absolute}, for any $l \leq r/2$ and any $r$-graph $G$, we have
\begin{equation}
\label{eq:t0absolute}
\tau(G) \leq \frac{|D(G)|}{2}\left[ \frac{1}{l} + \left(3 + \frac{r-1}{l-1}\right)\left(1 - \frac{1}{l}\right)^{r-1}\right].
\end{equation}

We will set
\[ l = \floor{\frac{r-1}{\ln(r-1) + \sqrt{\ln(r-1)}}},\] so
\[\frac{r-1}{\ln(r-1) + \sqrt{\ln(r-1)}} -1 \leq l \leq \frac{r-1}{\ln(r-1) + \sqrt{\ln(r-1)}}. \]
Now, using the lower bound for $l$, we have, for $r \geq r_0$,
\begin{equation}
\label{eq:approx1}
\frac{1}{l} \leq \frac{\ln(r-1) + \sqrt{\ln(r-1)} + c_0}{(r-1)}
\end{equation}
as long as
\begin{equation}
\label{eq:c0bound}
    c_0 \geq \frac{\left(\ln(r_0-1)+\sqrt{\ln(r_0-1)}\right)^2}{(r_0-1)-\left(\ln(r_0-1)+\sqrt{\ln(r_0-1)}\right)}.
\end{equation}

Similarly,
\begin{equation}
\label{eq:approx2}
\frac{r-1}{l-1} \leq \ln(r-1) + \sqrt{\ln(r-1)} + c_1
\end{equation}
holds for $r \geq r_0$ as long as
\begin{equation}
\label{eq:c1bound}
 c_1 \geq \frac{2\left(\ln(r_0-1)+\sqrt{\ln(r_0-1)}\right)^2}{(r_0-1)-2\left(\ln(r_0-1)+\sqrt{\ln(r_0-1)}\right)}.
\end{equation}

On the other hand, the upper bound for $l$ gives
\[ \left(1 - \frac{1}{l}\right)^{r-1} \leq \frac{1}{(r-1)\exp\left(\sqrt{\ln(r-1)}\right)} \]
which is valid for all $r$. Along with \eqref{eq:approx2}, this implies 
\begin{equation}
\label{eq:approx3}
\left(3 + \frac{r-1}{l-1}\right)\left(1 - \frac{1}{l}\right)^{r-1} \leq \frac{\ln(r-1) + \sqrt{\ln(r-1)} + 3 + c_1}{(r-1)\exp\left(\sqrt{\ln(r-1)}\right)} \leq \frac{c_2}{r-1},
\end{equation}
where the second inequality, for all $r \geq r_0$, is valid for
\begin{equation}
\label{eq:c2bound}
c_2 \geq \frac{\ln(r_0-1)+\sqrt{\ln(r_0-1)}+3+c_1}{e^{\sqrt{\ln(r_0-1)}}}.
\end{equation}

Finally, \eqref{eq:t0absolute}, \eqref{eq:approx1}, and \eqref{eq:approx3} together imply what is the main point of this section. For any $r \geq r_0$ and any $r$-graph $G$,
\begin{equation}
    \label{eq:tabsolute}
    \tau(G) \leq \frac{{|D(G)|}}{2} \left[\frac{\ln(r-1) + \sqrt{\ln(r-1)} + c_0 + c_2}{r-1} \right],
\end{equation} 
where $c_0$, $c_1$ and $c_2$ are as in \eqref{eq:c0bound}, \eqref{eq:c1bound}, and \eqref{eq:c2bound} respectively. In our application of \eqref{eq:tabsolute}, we will take $c_0, c_1$, and $c_2$ to be the lower end of their ranges, and, hence, treat them as functions of $r_0$.

\subsubsection{\texorpdfstring{A lower bound on $\alpha_r$ for $d \geq 2$}{A lower bound on alpha for d >= 2}} \label{subsec-alpha}

Note that, for each fixed $r$, $\alpha_r(d)$ is monotone increasing in $d$. We claim that there is $c_3 > 0$ such that, for all $r \geq r_0$ and $d \geq 2$, 
\begin{equation}
\label{eq:napprox}
\alpha_r(d) \geq 1 - \left(\frac{1}{2r-1}\right)^{1/(r-1)} \geq \frac{(1-c_3)\ln (2r-1)}{r-1}.
\end{equation}
Note that \eqref{eq:napprox} is equivalent to 
$$
\left(1- \frac{(1-c_3)\ln (2r-1)}{r-1}\right)^{r-1} \geq \frac{1}{2r-1},
$$
which, for $c_3$ large enough, is implied by 
$$
\left(e^{-\ln(2r-1)/(r-1)}\right)^{r-1} \geq \frac{1}{2r-1}.
$$
The last implication follows from the standard bound:
$$
e^{-x} \leq 1-x+\frac{x^2}{2} \leq 1 - (1-c_3)x
$$
where the second inequality is true as long as $x \leq 2c_3$. Hence \eqref{eq:napprox} holds as long as
\begin{equation}
\label{eq:c3bound}
c_3 \geq \frac{\ln(2r_0-1)}{2(r_0-1)}. \end{equation}
Finally, from \eqref{eq:napprox}, we obtain that, for every $r \geq r_0$ and $d \geq 2$,
\begin{equation}
\label{eq:nudgeq2}\alpha_r(d) \geq \frac{(1 - c_3)\ln(2r-1)}{r-1},
\end{equation}
where $c_3$ is as in \eqref{eq:c3bound}. In using \eqref{eq:nudgeq2}, we will take $c_3$ to be the lower end of its range, and, hence, treat it as a function of $r_0$.

\subsection{\texorpdfstring{Proof for $d = \omega(1)$}{Proof for for d >> 1}}
\label{app:largerlarged}

For $d = \omega(1)$, we have $D(H) \sim \binom{n}{r-1}$ w.h.p. Then, Theorems~\ref{thm:matching-large-p} and \ref{thm:cover-absolute} (with $l = 2$) give
\[ \frac{\tau}{\nu} \leq \frac{r}{2}\left[ \frac{1}{2} + (r+2)\left(\frac{1}{2}\right)^{r-1} \right] \quad \text{w.h.p}. \]
Now, that $\tau/\nu < Cr$ is implied by \[ (r+2)\left(\frac{1}{2}\right)^{r-1} < 2C - \frac{1}{2}. \]
Setting $C = 0.4$, this is evidently true for all $r \geq 6$.

For every constant $r_0$, and $r \geq r_0$, Theorem~\ref{thm:matching-large-p} and \eqref{eq:tabsolute} give
\[ \frac{\tau}{\nu} \leq \frac{r}{2(r-1)} \left(\ln(r-1) + \sqrt{\ln(r-1)} + c_0 + c_2 \right) \quad \text{w.h.p}, \]
where $c_0, c_2$ go to zero as $r_0 \rightarrow \infty$. So, for large enough $r$, we obtain the significantly stronger bound \[ \frac{\tau}{\nu} = (1 + o(1))\frac{\ln r}{2} \quad \text{w.h.p}. \]

\subsection{\texorpdfstring{Proof for for $d = \Theta(1)$}{Proof for for d ~ 1}}
\label{app:largermediumd}

For $d = \Theta(1)$, we have $|D(H)| \sim \binom{n}{r-1} (1 - e^{-d})$ w.h.p. Let $$\eta_r(d) : = \frac{1 - e^{-d}}{\alpha_r(d)}. $$ 
We first show that, for each $r \geq 6$, $\eta$ is weakly increasing in the range $d \in [1/(r-1), 2]$. This allows us to restrict our attention to $d \geq 2$, where we use the trivial bound $|D(H)| \leq \binom{n}{r-1}$ simplifying the analysis considerably (at the cost of a worse constant). 

\begin{claim}
For each fixed $r \geq 6$, $\eta_r(d)$ is weakly increasing for $1/(r-1) \leq d \leq 2$.
\end{claim}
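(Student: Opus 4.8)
The plan is to verify the claim by a direct, if slightly tedious, calculus argument: write $\eta_r(d) = (1-e^{-d})/\alpha_r(d)$ with $\alpha_r(d) = 1 - ((r-1)d+1)^{-1/(r-1)}$, and show $\eta_r'(d) \geq 0$ on the interval $[1/(r-1),2]$. Since $\eta_r' \geq 0$ is equivalent (both denominators being positive) to
\[
(e^{-d})\,\alpha_r(d) \;\geq\; (1-e^{-d})\,\alpha_r'(d),
\]
I would first compute $\alpha_r'(d) = \tfrac{1}{r-1}\cdot (r-1)\cdot((r-1)d+1)^{-1/(r-1)-1} = ((r-1)d+1)^{-r/(r-1)}$, and then reduce the inequality to showing
\[
e^{-d}\Bigl(1 - ((r-1)d+1)^{-1/(r-1)}\Bigr) \;\geq\; (1-e^{-d})\,((r-1)d+1)^{-r/(r-1)}.
\]
Setting $u = (r-1)d+1 \in [2, 2r-1]$, this becomes a two-variable inequality that I would try to control by bounding the right-hand side from above and the left-hand side from below.

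The key steps, in order, would be: (1) establish the equivalent differential inequality above; (2) on the left, use that $1 - u^{-1/(r-1)} \geq 1 - u^{-1/(r-1)}$ is already simple, and bound $e^{-d} = e^{-(u-1)/(r-1)} \geq e^{-2}$ on the whole interval (since $d \leq 2$); (3) on the right, note $1 - e^{-d} \leq d$ and $u^{-r/(r-1)} = u^{-1}\cdot u^{-1/(r-1)} \leq \tfrac{1}{2}u^{-1/(r-1)}$ (using $u \geq 2$), so the right side is at most $\tfrac{d}{2}\,u^{-1/(r-1)}$; (4) combine to reduce to showing $e^{-2}(1 - u^{-1/(r-1)}) \geq \tfrac{d}{2}u^{-1/(r-1)}$ on the relevant range, or a sharper variant if this crude bound is not quite enough near $d=2$; (5) handle the resulting single-variable inequality in $d$ (with $r \geq 6$ fixed, or uniformly in $r \geq 6$) by monotonicity in $r$ — specifically, for fixed $d$, $u^{-1/(r-1)} = ((r-1)d+1)^{-1/(r-1)}$ is monotone in $r$, so it suffices to check the extreme case $r = 6$ — and conclude by a finite check or an explicit elementary estimate.

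The main obstacle I expect is that the crudest bounds ($e^{-d} \geq e^{-2}$, $1-e^{-d}\leq d$) may be too lossy near the right endpoint $d = 2$, where $e^{-d}$ is smallest and $\alpha_r'(d)$ is not negligible; so step (3)–(4) may need refinement, e.g. keeping $e^{-d}$ rather than replacing it by $e^{-2}$, and using the sharper $1 - e^{-d} \leq d e^{-d} \cdot \tfrac{e^d - 1}{d e^d}$-type bounds, or simply splitting the interval $[1/(r-1),2]$ into a piece where the crude bound works and a short piece near $d=2$ handled separately. An alternative, cleaner route is to show directly that the function $g(d) := \ln(1-e^{-d}) - \ln \alpha_r(d)$ has nonnegative derivative, i.e. $\frac{e^{-d}}{1-e^{-d}} \geq \frac{\alpha_r'(d)}{\alpha_r(d)}$; since $\frac{e^{-d}}{1-e^{-d}}$ is decreasing and $\frac{\alpha_r'(d)}{\alpha_r(d)}$ is also decreasing, one can compare them by checking the ordering at a few points together with a convexity/monotonicity comparison of the two decreasing functions — again reducing to the worst case $r = 6$ and a finite computation, in the spirit of the finite check cited for the $r=3$ case in \cite{kp20}. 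Either way, the heart of the argument is elementary monotonicity plus a bounded computation, with the technical care concentrated near $d = 2$.
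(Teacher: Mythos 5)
Your first step is the same as the paper's: reduce the claim to the differential inequality $e^{-d}\alpha_r(d) \geq (1-e^{-d})\alpha_r'(d)$ with $\alpha_r'(d)=((r-1)d+1)^{-r/(r-1)}$. But the main bounding chain you propose after that would fail, and the fallbacks you sketch do not close the gap. Concretely, at $r=6$, $d=2$ (so $u=11$) your steps (2)--(4) reduce to $e^{-2}\bigl(1-11^{-1/5}\bigr)\geq \tfrac{d}{2}\,11^{-1/5}$, i.e.\ roughly $0.05\geq 0.62$, which is false; the true inequality holds there only with about $6\%$ slack, so any bound that replaces $u^{-1}$ by $1/2$ (a factor-$5.5$ loss) or $1-e^{-d}$ by $d$ cannot survive near $d=2$, and $e^{-d}\geq e^{-2}$ is similarly fatal near the left endpoint $d=1/(r-1)$, where the inequality is again nearly tight (it degenerates to equality as $d\to 0$). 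Moreover, your proposed rescue of ``reduce to the worst case $r=6$'' is not justified as stated: monotonicity of $((r-1)d+1)^{-1/(r-1)}$ in $r$ makes \emph{both} sides of the inequality decrease in $r$ (the left through $\alpha_r(d)$, the right through $\alpha_r'(d)$), so no comparison follows without proving monotonicity in $r$ of the full ratio, and in any case the interval $[1/(r-1),2]$ itself grows with $r$, so an $r=6$ check does not cover small $d$ for larger $r$. Likewise, ``both $\tfrac{e^{-d}}{1-e^{-d}}$ and $\tfrac{\alpha_r'(d)}{\alpha_r(d)}$ are decreasing, so compare at a few points'' is not a proof: two decreasing functions can cross.

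The missing idea, which is how the paper handles all $r\geq 6$ uniformly and exactly, is the substitution $z=(r-1)d$: the differential inequality becomes $f(z):=(z+1)^{r/(r-1)}-z-e^{z/(r-1)}\geq 0$ on $(1,2(r-1)]$. One then shows $f''$ is monotone decreasing with exactly one sign change on $[0,2(r-1)]$, while $f(0)=f'(0)=0$ and $f'>0$ just to the right of $0$; hence $f$ increases and then has at most one interior critical point, so nonnegativity on the whole interval follows from the single endpoint check $f(2(r-1))=(2r-1)^{r/(r-1)}-(r-1)-e^{2}\geq 0$, which is verified at $r=5$ and is increasing in $r$. If you want to salvage your outline, you should replace steps (2)--(5) by this kind of exact shape analysis (or some other argument with no lossy constants), since the inequality is too tight at both ends of the range for crude estimates, and any appeal to a single value of $r$ or a finite numerical check must be preceded by a proved monotonicity-in-$r$ statement for the full inequality, not just for one of its factors.
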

\begin{proof}

Note that
\begin{eqnarray*}
& \frac{\partial}{\partial d} \left[\alpha_r(d)\right]  = \left(\frac{1}{(r-1)d + 1}\right)^{\frac{r}{r-1}}, \quad \mbox{ and } \quad
& \frac{\partial}{\partial d} \left[1 - e^{-d}\right] = e^{-d}.
\end{eqnarray*}
Hence, that $\eta_r$ is weakly increasing on $d \in [1/(r-1), 2]$ is implied by
\[ e^{-d} \alpha_r(d) - \left(1 - e^{-d}\right)  \left(\frac{1}{(r-1)d + 1}\right)^{\frac{r}{r-1}} \geq 0. \]
Setting $z = (r-1)d$, this is equivalent to
\[ f(z) := (z+1)^{\frac{r}{r-1}} - z - e^\frac{z}{r-1} \geq 0. \]
in the range $z \in (1, 2(r-1)]$. In the arguments below, we restrict our attention to the interval $z \in [0, \infty)$. We have 
\begin{align*}
    f'(z) & = \frac{r}{r-1}\left(z+1\right)^{\frac{1}{r-1}}-1 - \frac{1}{r-1}e^{\frac{z}{r-1}}, \quad\mbox{and }\\
    f''(z) & =  \frac{r}{\left(r-1\right)^{2}}\left(z+1\right)^{\left(\frac{1}{r-1}-1\right)} - \frac{1}{\left(r-1\right)^{2}}e^{\frac{z}{r-1}}.
\end{align*}
For $r \geq 3$, the first term is monotone decreasing and the second term is monotone increasing; hence the difference, $f''(z)$, is monotone decreasing. We also have, for $r > 1$, that $f''(0) = 1/(r-1) > 0$ and
$$ f''(2(r-1)) = \frac{1}{(r-1)^2} \left(\frac{r}{(2r-1)^{1 - \frac{1}{r-1}}} - e^2\right) < 0. $$
Indeed, the last claim follows from the fact that $r/(2r-1)^{1 - 1/(r-1)}$ is decreasing in $r$ with a limit of $1/2$. Now we have $f'(0) = 0$, that $f'(0 + \epsilon) > 0$ for small enough $\epsilon$, and that $f'$ is has a unique critical point. Finally, we have that $f(0) = 0$, that $f(0 + \epsilon) > 0$ for small enough $\epsilon$, and that $f$ has at most one critical point. Our assertion now follows from the fact that $f(2(r-1)) \geq 0$, for which it suffices to note that
\[f(2(r-1)) = (2r-1)^\frac{r}{r-1} - (r - 1) - e^2 \geq 0, \]
for $r = 5$, and that $f(2(r-1))$ is increasing in $r$.
\end{proof}

\subsubsection{\texorpdfstring{Bounds for large $r$}{Bounds for large r}}

Here we prove \eqref{eq-mainlarge2}, and  \eqref{eq-mainlarge1} for $r \geq 271$. We will restrict attention to $r \geq r_0$ and use the bounds obtained in Appendix~\ref{app:largerbounds}. Recall that, from Theorem~\ref{thm:matching-medium-p}, we have
\begin{equation*}
\nu(H) > (1 + o(1))\binom{n}{r-1} \frac{1}{r} \alpha_r(d) \quad\text{w.h.p}. 
\end{equation*}

Suppose first that $1/(r-1) \leq d \leq 2$. With $|D(H)| \sim \binom{n}{r-1} (1 - e^{-d})$ (w.h.p.), \eqref{eq:tabsolute} gives
\[ \tau(G) < (1 + o(1)) \binom{n}{r-1}\frac{1}{2} \left[\frac{\ln(r-1) + \sqrt{\ln(r-1)} + c_0 + c_2}{r-1} \right]\left(1 - e^{-d} \right) \quad\text{w.h.p}. \]
It follows that
\[ \frac{\tau}{\nu} \leq \frac{r}{2(r-1)} \left(\ln(r-1) + \sqrt{\ln(r-1)} + c_0 + c_2 \right) \eta_r(d) \quad\text{w.h.p}. \]
Since $\eta_r(d)$ is weakly increasing for $d \in [1/(r-1), 2]$, it suffices to bound this expression for $d = 2$. 
From \eqref{eq:nudgeq2}, we have
$$\alpha_r(2) \geq  \frac{(1 - c_3)\ln(2r-1)}{r-1},
$$
giving \[ \eta_r(d) = \frac{1 - e^{-d}}{\alpha_r(d)}  \leq \frac{(r-1)(1 - e^{-2})}{(1 - c_3)\ln(2r-1)}. \]
Hence, for $1/(r-1) \leq d \leq 2$, we obtain
\begin{equation*}
\label{eq:r271smalld}
\frac{\tau}{\nu} \leq r \cdot \delta_r \cdot (1 - e^{-2}) \quad\text{w.h.p}.,
\end{equation*}
where
\begin{equation*}
\delta_r := \frac{\ln(r-1) + \sqrt{\ln(r-1)} + c_0 + c_2}{2(1 - c_3)\ln(2r-1)}.\end{equation*}

For $d \geq 2$, we rely on Theorem~\ref{thm:matching-medium-p} and \eqref{eq:tabsolute} with $|D(H)| \leq \binom{n}{r-1}$ to obtain
\begin{equation*}
\label{eq:r271larged}
\frac{\tau}{\nu} \leq \delta_r \cdot r \quad\text{w.h.p}.
\end{equation*}
In either case, we have $\tau/\nu \leq \delta_r r$. We claim that $\delta_r$ is decreasing in~$r$, and, hence, it suffices to bound $\delta_{r_0}$.

\begin{claim}
For $r \geq 4$, $\delta_r$ is decreasing in $r$.
\end{claim}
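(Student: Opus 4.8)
The plan is to show that $\delta_r$ is decreasing by a direct analytic argument on the two functions that appear in its numerator and denominator. Write $L = \ln(r-1)$ and recall that $c_0, c_2$ (and $c_3$ in the denominator) are fixed constants once $r_0$ is fixed, so for the purpose of this monotonicity claim they are simply positive constants not depending on $r$. Thus $\delta_r = g(r)/h(r)$ where $g(r) = \ln(r-1) + \sqrt{\ln(r-1)} + c_0 + c_2$ and $h(r) = 2(1-c_3)\ln(2r-1)$. The first step is to reduce the claim to a comparison of logarithmic derivatives: $\delta_r$ is decreasing in $r$ iff $g'(r)/g(r) < h'(r)/h(r)$, i.e. iff $g'(r) h(r) < g(r) h'(r)$ for all $r \geq 4$ (here I treat $r$ as a real variable $\geq 4$, which is harmless since the claim is about a discrete monotone sequence dominated by a monotone function).

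Next I would compute the two derivatives explicitly. We have
\[
g'(r) = \frac{1}{r-1}\left(1 + \frac{1}{2\sqrt{\ln(r-1)}}\right), \qquad h'(r) = \frac{4(1-c_3)}{2r-1}.
\]
The key structural observation is that $\ln(2r-1) > \ln(r-1)$ while $2/(2r-1)$ is comparable to $1/(r-1)$ (indeed $2/(2r-1) > 1/(r-1)$ fails, but $2/(2r-1) \to 1/r$), so the denominator $h$ is the ``larger, slower-growing'' function. More precisely I would argue that $h'(r)/h(r) = \frac{2}{(2r-1)\ln(2r-1)}$ and $g'(r)/g(r) \le \frac{1}{(r-1)\ln(r-1)}\cdot\big(1 + \tfrac{1}{2\sqrt{\ln(r-1)}}\big)\cdot\frac{\ln(r-1)}{g(r)}$, and then bound: since $g(r) \ge \ln(r-1) + \sqrt{\ln(r-1)}$, the factor $\ln(r-1)/g(r) \le 1/(1 + 1/\sqrt{\ln(r-1)})$, which exactly cancels the $(1 + 1/(2\sqrt{\ln(r-1)}))$ bump up to lower order. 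This shows $g'(r)/g(r) \lesssim \frac{1}{(r-1)\ln(r-1)}$, and then the comparison reduces to checking $\frac{1}{(r-1)\ln(r-1)} < \frac{2}{(2r-1)\ln(2r-1)}$, i.e. $(2r-1)\ln(2r-1) < 2(r-1)\ln(r-1)$ — but this is \emph{false} for large $r$, which signals that the crude bound loses too much and one must keep the constant $c_0 + c_2$ (and $c_3$) in play, or else verify the inequality numerically at $r = 4, \ldots, r_0$ and then use a cleaner asymptotic comparison for $r > r_0$.

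Accordingly, the cleanest route — and the one I expect the authors take — is a two-part argument: for $4 \le r \le r_0$ one checks $\delta_{r+1} \le \delta_r$ by a finite computation (the values of $c_0, c_2, c_3$ as functions of $r_0$ are explicit from \eqref{eq:c0bound}, \eqref{eq:c2bound}, \eqref{eq:c3bound}); and for $r \ge r_0$ one observes that $g(r) = L + \sqrt{L} + O(1)$ and $h(r) = 2(1-c_3)(L + \ln(2 - 1/r)) = 2(1-c_3)L(1 + o(1))$, so $\delta_r = \frac{L + \sqrt L + O(1)}{2(1-c_3)L(1+o(1))}$, and differentiating this leading-order form in $L$ (with $L$ increasing in $r$) shows the $\sqrt L / L$ and $1/L$ terms are decreasing while the leading $\frac{1}{2(1-c_3)}$ is constant, giving $\delta_r$ eventually decreasing; combined with the finite check this covers all $r \ge 4$. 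The main obstacle is the bookkeeping: making the ``eventually'' precise enough that it meets up with the finite-range check at exactly $r_0$, which forces one to track the $O(1)$ terms rather than discard them. I would therefore organize the proof as (i) set $F(r) := g(r) h'(r) - g'(r) h(r)$ and show $F(r) > 0$; (ii) factor out $1/((r-1)(2r-1))$ and reduce to a polynomial-in-$(L, \sqrt L)$ inequality; (iii) verify that inequality for $r \ge$ some explicit threshold by elementary estimates on $L$ and $\sqrt L$; (iv) dispatch the finitely many remaining $r$ by direct evaluation.
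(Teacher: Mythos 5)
Your setup is correct — writing $\delta_r = g(r)/h(r)$, computing $g'$ and $h'$, and reducing the claim to $g'(r)h(r) < g(r)h'(r)$ is exactly the right starting point, and you rightly observe that the crude bound $g'/g \lesssim \frac{1}{(r-1)\ln(r-1)}$ loses too much. But what you offer after that diagnosis is a plan, not a proof, and the one concrete mechanism you propose for $r \ge r_0$ does not work as stated: writing $h(r) = 2(1-c_3)L(1+o(1))$ and ``differentiating the leading-order form'' cannot establish monotonicity at any finite $r$, because an $o(1)$ error in $h$ is precisely the kind of perturbation that can flip the sign of $(g/h)'$. Indeed the discarded term is the additive $\ln\left(2+\frac{1}{r-1}\right)$ in $\ln(2r-1) = L + \ln\left(2+\frac{1}{r-1}\right)$, and the ratio $L/\ln(2r-1)$ is \emph{increasing} in $r$; the claim is true only because the $\sqrt{L}$ term (together with $c_0+c_2$) in the numerator outweighs that increase — which is exactly the ``$O(1)$ bookkeeping'' you defer. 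Steps (i)--(iv) never materialize: the reduced inequality is not written down, no explicit threshold is produced, and the finite check for small $r$ is not performed. That check is not vacuous: after cross-multiplication the inequality comes down to (roughly) $\ln 2 + \frac{\ln 2}{2\sqrt{L}} + (\text{lower order}) < \frac{\sqrt{L}}{2} + c_0 + c_2$, and at $z=\ln(r-1)$ near $1$ the two sides are close, so the numerical values of $c_0,c_2$ genuinely enter.

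For comparison, the paper's proof avoids the threshold-plus-finite-check structure entirely. Substituting $z=\ln(r-1)$, the derivative condition becomes $\left(1+\frac{1}{2\sqrt z}\right)\ln(2e^z+1) < \frac{2e^z}{2e^z+1}\left(z+\sqrt z + c_0 + c_2\right)$, which is rearranged into $L(z) < R(z)$ with $L(z)=\left(1+\frac{1}{2e^z}\right)\ln(2e^z+1)$ and $R(z)=\left(1-\frac{1}{2\sqrt z+1}\right)\left(z+\sqrt z+c_0+c_2\right)$ as in \eqref{eq:decreasing1}. The point of this particular rearrangement is that
\[
\frac{dL}{dz} = 1 - \frac{\ln(2e^z+1)}{2e^z} \;<\; 1 \;<\; 1 + \frac{z+\sqrt z + c_0+c_2}{\sqrt z\,(2\sqrt z+1)^2} = \frac{dR}{dz},
\]
so $R-L$ is increasing and a single base-point evaluation at $z_0=1$ (i.e.\ $r-1=e$, below the range $r\ge 4$, and the one place where the values of $c_0,c_2$ are used) settles all $r\ge 4$ simultaneously. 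If you want to complete your route you must actually carry out your step (ii): cross-multiply, cancel the common $L$-order pieces, and verify the resulting explicit inequality in $\sqrt L$ and $c_0+c_2$ for all $z \ge \ln 3$ — at which point you will have essentially reproduced the paper's computation in less convenient coordinates.
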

\begin{proof}
By setting $z = \ln(r-1)$ and then differentiating, it suffices to show that
\[ \left(1 + \frac{1}{2\sqrt{z}}\right)\ln(2e^z + 1) - \frac{2e^z}{2e^z + 1}\left(z + \sqrt{z} + c_0 + c_2\right) < 0, \]
which is equivalent to
\begin{equation}
\label{eq:decreasing1}
\left(1 + \frac{1}{2e^z}\right) \ln(2e^z + 1) < \left(1 - \frac{1}{2\sqrt{z} + 1}\right) \left(z + \sqrt{z} + c_0 + c_2 \right).
\end{equation}
Denote by $L(z)$ (resp. $R(z)$), the LHS (resp. RHS) of \eqref{eq:decreasing1}.
We have
\[ \frac{dL}{dz} = 1 - \frac{\ln(2e^z + 1)}{2e^z}, \quad \mbox{ and }\quad \frac{dR}{dz} = 1 + \frac{z + \sqrt{z} + c_0 + c_2}{\sqrt{z}(2\sqrt{z}+ 1)^2}, \]
which implies that $L(z) < R(z)$ for $z \geq z_0$, where we may take $z_0 = 1$. The assertion follows.
\end{proof}

From the above discussion, we have that for every $r_0 \geq 6$, the ratio $\tau/\nu$ is bounded above (w.h.p.) by $\delta_{r_0} r$. Noting that $c_i(r_0) \rightarrow 0$ as $r_0 \rightarrow \infty$ for each $0 \leq i \leq 3$, we get that $\delta_{r_0} \rightarrow 0.5$ as $r_0 \rightarrow \infty$, establishing \eqref{eq-mainlarge2} of Theorem \ref{thm:mainlarge}.

To see \eqref{eq-mainlarge1} of Theorem~\ref{thm:mainlarge} for $r \geq 271$, it suffices to compute explicitly the values of the different constants. We may take $c_0 = 0.2421, c_1 = 0.50, c_2 = 1.08,$ and $c_3 = 0.012$. With this, we obtain $\delta_{271} < 0.747.$

\subsubsection{\texorpdfstring{Bounds for medium $r$}{Bounds for medium r}}

For $7 \leq r \leq 270$, we reduce the analysis to a finite computation, which can be carried out easily through {\tt Mathematica}. Computation shows that $C$ may be taken to be $0.938$. 

For $d \geq 2$, we rely on Theorem~\ref{thm:cover-absolute} with the trivial bound $D(H) \leq \binom{n}{r-1}$, and Theorem~\ref{thm:matching-medium-p} to obtain
\begin{equation}
\label{eq:mediumr1}
\frac{\tau}{\nu} \leq \frac{r}{2} \left[ \frac{1}{l} + \left(3 + \frac{r-1}{l-1}\right)\left(1 - \frac{1}{l}\right)^{r-1}\right] \frac{1}{\alpha_r(d)} \quad \text{w.h.p}.
\end{equation}
Since $\alpha_r$ is a monotone increasing function in $d$, it suffices to bound \eqref{eq:mediumr1} at $d = 2$.

For $1/(r-1) \leq d \leq 2$, we use Theorem~\ref{thm:cover-absolute} with $D(H) \sim \binom{n}{r-1}(1 - e^{-d})$, and Theorem~\ref{thm:matching-medium-p} to obtain
\begin{equation}
\label{eq:mediumr2}
    \frac{\tau}{\nu} \leq \frac{r}{2} \left[ \frac{1}{l} + \left(3 + \frac{r-1}{l-1}\right)\left(1 - \frac{1}{l}\right)^{r-1}\right] \eta_r(d) \quad \text{w.h.p}.
\end{equation}
Since $\eta_r(d)$ is monotone increasing for $1/(r-1) \leq d \leq 2$, it suffices to bound \eqref{eq:mediumr2} at $d = 2$.

In either case, we have
\begin{equation*}
    \label{eq:mediumr}
    \frac{\tau}{\nu} \leq \frac{r}{2} \left[ \frac{1}{l} + \left(3 + \frac{r-1}{l-1}\right)\left(1 - \frac{1}{l}\right)^{r-1}\right] \left( 1 - \left(\frac{1}{2r-1}\right)^{1/(r-1)} \right)^{-1} \quad \text{w.h.p}.
\end{equation*}
Now, it suffices to verify that, for each $7 \leq r \leq 270$ there exists an $l \leq r/2$ such that the expression on the RHS is bounded by  $0.938 r$. 

\section{6-uniform hypergraphs}
\label{app:r6}
We need only consider $d = \Theta(1)$. We use Theorem~\ref{thm:cover-medium-p-large-r} with $l = 2$ (and, of course, $r = 6$). It is easy to verify that
\[ \zeta_1(6, 2) = \frac{20}{32}, \quad \text{and} \quad  \zeta_2(6, 2) = \frac{12}{32}. \]
And so, w.h.p.
$$\tau(H) < (1 + o(1)) \binom{n}{5} \left[\frac{5}{32} \left(1 - \exp\left(-\frac{d}{2} \left(1 + e^{-d}\right)\right) \right) + \frac{7}{32} \left(1 - e^{-d}\right) \right].$$
On the other hand, from Theorem~\ref{thm:matching-medium-p}, we have w.h.p.
\[ \nu(H) > (1 + o(1))  \binom{n}{5} \frac{1}{6} \left[ 1 - \left(\frac{1}{5d + 1}\right)^{1/5}\right].\]
We claim that, for $d \geq 1/5$, \[ \frac{\tau}{\nu} < 0.781 \cdot 6 = 4.686. \]
This is easily verified for $d \geq 6$ (since $\tau \leq 3/8$ and $\nu$ is increasing in $d$). To show that it is true for $d \in [1/5, 6]$, one may rely on a finite computation similar to {\cite[Lemma~1.6]{kp20}}.

We note that the methods used in Section~\ref{sec:covers} can be extended to give $\tau/\nu < 0.721 * 6 = 4.326$. Unfortunately, this is not enough to settle Conjecture~\ref{conj:generaltuzar}. Hence, we only give a brief sketch below.

We may obtain an explicit 5-cover of $H$ by taking $5$-sets of type $50, 05, 14$, and $32$ (in a uniform random partition). The bound in Theorem~\ref{thm:cover-medium-p-large-r} is then obtained by removing certain 5-sets of type $32$. We may also (as in Sections~\ref{sec:cover3}--\ref{sec:cover5}) remove $5$-sets of type $41$ that only cover edges of type $51$ (edges that these cover are also covered by 5-sets of type $50$); and $5$-sets of type $05, 50$ that only cover edges contained within the same block (though one then needs to add back a small proportion of these). This results in a bound of the form
\[ \tau < (1 + o(1)) \binom{n}{5} \frac{3}{8} \left[1 - \exp\left(-\frac{d}{2}\left(1 + e^{-d}\right)\right) \right]. \]

\section{Further computations}
\label{app:computations}

The parameter $\zeta_1(r,l)$, introduced just before the statement of Theorem~\ref{thm:cover-medium-p-large-r}, can be expressed explicitly as
\begin{equation} \label{eq-prop}
\zeta_1(r,l) = \frac{1}{l^{r-1}}\sum_{a_1+\cdots+a_l = r-1,~a_i \geq 2} \binom{r-1}{a_1,\ldots,a_l},
\end{equation}
and so for $d=\Theta(1)$ (and $d \geq 1/(r-1)$), Theorems~\ref{thm:matching-medium-p} and~\ref{thm:cover-medium-p-large-r} can be combined to give an upper bound on $\tau/\nu$ that (up to a factor of $1+o_n(1)$) is an explicit expression involving $r, l$ and $d$, specifically: 
\begin{equation} \label{minmax}
\frac{\tau}{\nu} \leq (1+o(1))\min_{2 \leq l \leq r/2} \sup_{d \geq 1/(r-1)} \left(\frac{\psi_{r,l}(d)}{\frac{1}{r}\left(1-\left(\frac{1}{(r-1)d+1}\right)^{1/(r-1)}\right)}\right).
\end{equation}
The expression in \eqref{eq-prop} is not amenable to easy calculation, except for quite small $r$; however, setting $T(r,l)=l^{r-1}\zeta_1(r,l)$, we have the following much simpler explicit expression: 
\begin{equation} \label{eq-prop2}
T(r,l) = l!\sum_{j=0}^l (-1)^j\binom{r-1}{j} \stirling{r-1-j}{l-j},
\end{equation}
(see \cite[A200091]{oeis}).
Here $\stirling{a}{b}$ is the Stirling number of the second kind. Using this expression, the minimax optimization (\ref{minmax}) can be computed using {\tt Mathematica} in a reasonable amount of time, up to say $r=500$. For larger values of $r$, we can still obtain upper bounds for the ratio $\tau/\nu$ by a judicious restriction on the range of $l$. The table below shows the results obtained for $6 \leq r \leq 85$. For $86 \leq r \leq 1000$, the bound obtained on $\tau/\nu$ is always at most $Cr$ for $C<.60$. Specifically, we may take $C=0.5993$. This strongly suggests that we may take $C<0.60$ for all $r \geq 83$; and since $\delta_{1000} < 0.6964$, these computations are enough to show certainly that the ratio $\tau/\nu$ is bounded above by $Cr$ for some $C < 0.7$ for all $r \geq 12$.

\begin{center}
\begin{tabular}{|c|c||c|c||c|c||c|c|}
\hline
$r$ & $\tau/(r\nu) \leq$ & $r$ & $\tau/(r\nu) \leq$ & $r$ & $\tau/(r\nu) \leq$ & $r$ & $\tau/(r\nu) \leq$ \\
\hline
6 & 0.7805 & 26 & 0.6263 & 46 & 0.611 & 66 & 0.6058 \\
\hline 
7 & 0.7064 & 27 & 0.6253 & 47 & 0.6097 & 67 & 0.604 \\
\hline
8 & 0.6789 & 28 & 0.6266 & 48 & 0.6093 & 68 & 0.6026 \\
\hline
9 & 0.6804 & 29 & 0.6299 & 49 & 0.6098 & 69 & 0.6018 \\
\hline
10 & 0.699 & 30 & 0.6309 & 50 & 0.6109 & 70 & 0.6014 \\
\hline 
11 & 0.7062 & 31 & 0.6248 & 51 & 0.6127 & 71 & 0.6013 \\
\hline
12 & 0.6741 & 32 & 0.6209 & 52 & 0.6099 & 72 & 0.6017 \\
\hline 
13 & 0.6565 & 33 & 0.6188 & 53 & 0.608 & 73 & 0.6025 \\
\hline
14 & 0.6503 & 34 & 0.6185 & 54 & 0.6067 & 74 & 0.6027 \\
\hline
15 & 0.6526 & 35 & 0.6196 & 55 & 0.6062 & 75 & 0.6013 \\
\hline
16 & 0.6613 & 36 & 0.6221 & 56 & 0.6063 & 76 & 0.6003 \\
\hline
17 & 0.6638 & 37 & 0.622 & 57 & 0.607 & 77 & 0.5997 \\
\hline
18 & 0.6484 & 38 & 0.6178 & 58 & 0.6082 & 78 & 0.5994 \\
\hline
19 & 0.6392 & 39 & 0.6151 & 59 & 0.6075 & 79 & 0.5995 \\
\hline 
20 & 0.6353 & 40 & 0.6136 & 60 & 0.6057 & 80 & 0.5999 \\
\hline
21 & 0.6357 & 41 & 0.6134 & 61 & 0.6044 & 81 & 0.6005 \\
\hline
22 & 0.6397 & 42 & 0.6141 & 62 & 0.6037 & 82 & 0.6003 \\
\hline
23 & 0.6465 & 43 & 0.6159 & 63 & 0.6035 & 83 & 0.5992 \\
\hline
24 & 0.6369 & 44 & 0.6165 & 64 & 0.6038 & 84 & 0.5984 \\
\hline
25 & 0.63 & 45 & 0.6132 & 65 & 0.6046 & 85 & 0.5979 \\
 \hline
\end{tabular}

{\small The results of running the minimax optimization (\ref{minmax}) on {\tt Mathematica} for $6 \leq r \leq 85$.} 
\end{center}

\end{document}